\titleformat{\section}[block]{\bfseries\large}{\thesection}{1em}{}
\theoremstyle{plain}
\newtheorem{theorem}{Theorem}[section]
\newtheorem{lemma}[theorem]{Lemma}
\newtheorem{prop}[theorem]{Proposition}
\newtheorem{cor}[theorem]{Corollary}
\newtheorem{conjecture}[theorem]{Conjecture}
\newtheorem{lettertheorem}{Theorem}
\theoremstyle{definition}
\newtheorem{definition}[theorem]{Definition}
\newtheorem{remark}[theorem]{Remark}
\newtheorem{example}[theorem]{Example}
\theoremstyle{remark}
\newcommand\varfrak[1]{\mathord{\text{\textgoth{#1}}}}
\newcommand{\ie}{\emph{i.e. }\@ifnextchar.{\!\@gobble}{}}
\newcommand{\eg}{\emph{e.g.}\@ifnextchar.{\!\@gobble}{}}
\newcommand{\etc}{etc\@ifnextchar.{}{.\@}}
\newcommand{\zfrac}{  \mathbb{Z} \left[    \frac{1}{(n+1)!}   \right]  }
\newcommand{\perm}{\widetilde{S}_{n+1}(\mathbb{Z})}
\newcommand{\li}[1]{   \mathcal{I}_{a}(#1)} 
\newcommand{\lip}[1]{   \mathcal{I}_{a}^+(#1)} 
\newcommand{\lr}[2]{   \langle  #1 , #2\rangle} 
\newcommand*\circled[1]{\tikz[baseline=(char.base)]{
				\node[shape=circle,draw,inner sep=2pt] (char) {#1};}}
\renewcommand{\tilde}{\widetilde}
\newcommand{\bbN}{\mathbb{N}}
\newcommand{\bbR}{\mathbb{R}}
\newcommand{\bbZ}{\mathbb{Z}}
\newcommand{\id}{\text{id}}
\DeclareMathOperator{\vol}{Vol}
\DeclareMathOperator{\relvol}{relVol}
\numberwithin{equation}{section}
\title{Paper BOAT\\[0.5em] \large \textit{Bruhat Order and Alcovic Tilings}}
\author{ Federico Castillo\thanks{Pontificia Universidad Catolica, Santiago, Chile}, Damian de la Fuente\thanks{LAMFA, Universit\'e de Picardie Jules Verne, Amiens, France}, Nicolas Libedinsky\thanks{Universidad de Chile, Santiago, Chile}  and David Plaza\thanks{Universidad de Talca, Talca, Chile}  }
\date{ }
\begin{document}

\maketitle

\begin{abstract}
%We produce a formula for the size of lower Bruhat intervals for elements in the dominant cone of an affine Weyl group of type $A$. Additionally, we conjecture a generalization of this formula to all affine Weyl groups within the lowest two-sided Kazhdan-Lusztig cell.
We derive a formula for computing the size of lower Bruhat intervals for elements in the dominant cone of an affine Weyl group of type~$A$. This enumeration problem is reduced to counting lattice points in certain polyhedra. Our main tool is a decomposition—or tiling—of each interval into smaller, combinatorially tractable pieces, which we call paper boats. We also conjecture a generalization of this formula to all affine Weyl groups, restricted to elements in the lowest two-sided Kazhdan–Lusztig cell, which contains almost all of the elements.
\end{abstract}

\maketitle

\section*{A note on the title}

The title of this paper reflects two aspects of this work, though the connection between them emerged unexpectedly. 
Initially, we referred to the sets that we introduce as ``paper boats'' due to their visual resemblance in certain cases to actual paper boats (see the green objects in \Cref{fig:introA}). 
Subsequently, we observed that this name could also serve as an acronym for ``Bruhat Order and Alcovic Tilings'', which is the central theme of the paper (see \Cref{fig:introA,fig:introB}).

\section{Introduction}

In \cite{castillo2023size}, we initiated the study of the sizes of lower Bruhat intervals in affine Weyl groups and uncovered intriguing connections with convex geometry. 
In that work, we derived formulas for a set of elements, denoted by  $\varfrak{A}$ in this introduction, which plays a significant role in representation theory. 

\begin{figure}[hbt!]
\captionsetup[subfigure]{labelformat=simple}
\centering

\begin{subfigure}{0.3\textwidth}
\centering
\includestandalone[width=\textwidth]{images/DibPaperBoatAA}
\caption{Tiling the dominant part of a lower interval by paper boats.}
\label{fig:introA}
\end{subfigure}
\hfill
\begin{subfigure}{0.3\textwidth}
\centering
\includestandalone[width=\textwidth]{images/DibPaperBoatCC}
\caption{Tiling the dominant part of a lower interval by paper boats.}
\label{fig:introB}
\end{subfigure}
\hfill
\begin{subfigure}{0.3\textwidth}
\centering
\includestandalone[width=\textwidth]{images/DibPaperBoatBB}
\caption{Polyhedra associated to the tilings and lattice points.}
\label{fig:introC}
\end{subfigure}
\caption{Some examples of tilings of lower Bruhat intervals}
\label{fig:Intro2}
\end{figure}

Let $\mathfrak{B}$ denote the dominant elements, \ie, those whose alcoves belong to the dominant chamber, and let $\mathfrak{C}$ represent the lowest two-sided Kazhdan-Lusztig cell. 
These sets satisfy the inclusion $\varfrak{A} \subset \mathfrak{B} \subset \mathfrak{C}$ and are depicted in \Cref{fig:introD} for the affine Weyl group of type $A_2$. 
The set $\varfrak{A}$ corresponds to the green triangles, $\mathfrak{B}$ corresponds to the green and orange triangles, and $\mathfrak{C}$ corresponds to the green, orange and gray triangles. 
The red triangle corresponds to the identity element. 
It does not belong to any of the sets $\varfrak{A}$,   $\mathfrak{B}$ and $\mathfrak{C}$.

\begin{figure}[H]
\captionsetup[subfigure]{labelformat=simple}
\centering

\begin{subfigure}{0.4\textwidth}
\centering
\includestandalone[width=\textwidth]{images/DibABC}
\caption{Illustration of the sets $\varfrak{A}$,   $\mathfrak{B}$ and $\mathfrak{C}$ for the affine Weyl group of type $A_2$.}
\label{fig:introD}
\end{subfigure}
\hfill
\begin{subfigure}{0.4\textwidth}
\centering
\includestandalone[width=\textwidth]{images/DibFundPara}
\caption{Tiling the dominant region by translations of the fundamental parallelepiped.}
\label{fig:introE}
\end{subfigure}
\caption{}
\label{fig:Intro3}
\end{figure}

In this paper, we propose a conjectural formula (see Equation \eqref{eq: lowerint implied by conjs}) for the size of lower Bruhat intervals associated with elements in $\mathfrak{C}$ and prove it in type $A$ for elements in $\mathfrak{B}$. 
To illustrate the relative sizes of these sets in type $A_n$, consider selecting a ``random'' element from the affine Weyl group. 
The probability of such an element belonging to $\varfrak{A}$, $\mathfrak{B}$, and $\mathfrak{C}$ is 
$$\frac{1}{n!(n+1)!}, \quad \frac{1}{(n+1)!} \quad \mbox{and} \qquad 1, $$
respectively.
Thus, our conjecture effectively provides an answer for almost the entirety of the affine Weyl group, and our proof for $\mathfrak{B}$ extends our prior results \cite{castillo2023size} to encompass a set that is $n!$ times larger in type $A_n$.

Another interesting aspect of this work is the conjecture (partially established in this paper) that the resulting formulas, when restricted to some specific sets, are polynomials. 
This is a scenario that exceeds expectations in both simplicity and elegance. 
To elaborate briefly, let $\Pi^+$ denote the fundamental parallelepiped spanned by the fundamental coweights. 
In \eqref{eq: param c plus}, we prove  that $\mathfrak{B}$  can be tiled using the set $\{\Pi^+ +\lambda\mid\lambda\in (\Lambda^{\vee})^+  \}$, where $(\Lambda^{\vee})^+$ is the set of dominant coweights. 
This tessellation is illustrated in \Cref{fig:introE}, where $\Pi^+$ is highlighted in light blue and the points corresponds to the elements in $(\Lambda^{\vee})^+$. 
The aforementioned polynomiality phenomenon asserts that, for a fixed alcove  $x\subset \Pi^+$, the size of the Bruhat interval $[\id,x+\lambda]$ is  a polynomial in the coordinates of $\lambda\in (\Lambda^{\vee})^+$ expressed in the basis of fundamental coweights. 

Three observations are worth highlighting.  
First, within each tile $\Pi^+ +\lambda$ there is exactly one element of $\varfrak{A}$, so this polynomiality phenomenon extends that of  
\cite{castillo2023size}. 
Second, the emergence of polynomiality from this tiling provides yet another instance of the deep connection between alcove geometry (via the Bruhat order) and Euclidean geometry. 
Finally, and in alignment with the previous observation, we have verified computationally---quite unexpectedly---that all polynomials that arise in this context, even when extended to the full cell $\mathfrak{C}$, are \emph{geometric polynomials}. 
That is, they are linear combinations (with coefficients that do not depend on $\lambda$) of the volumes of the faces of the permutohedron $\mathsf{P}(\lambda)$, which is defined as the convex hull of the $W_\mathrm{f}$-orbit of $\lambda$. 
This was proven for $\varfrak{A}$ in \cite{castillo2023size}, and this observation strongly suggests that the tessellation using $\Pi^+$ introduced here is the natural and correct framework for this study.
Let us briefly explain the 
main contributions of this paper. 

\subsection*{Bruhat order criterion in alcovic terms}
Elements in $\varfrak{A}$ have nice algebraic properties that arbitrary elements do not share; see \Cref{lemma: thetas are maximal}. 
As a consequence \cite[Eq. (3.3)]{castillo2023size}, their lower Bruhat intervals can essentially be reduced to ideals in the dominance order.
The latter is significantly more accessible than the former, which simplifies matters considerably. 
To extend our results beyond $\varfrak{A}$, we require a criterion for determining whether $x \leq y$, not merely as abstract elements of the affine Weyl group, but specifically as alcoves.
We establish a criterion for the dominant region in type $ A $: comparing alcoves reduces to comparing their corresponding vertices with respect to the dominance order. 
More precisely we have the following proposition. 

\begin{prop}\label{prop:order}
Let $W_\mathrm{aff}$ be an affine Weyl group of type $A$, $ V $ the set of vertices of the fundamental alcove, and
$ x, y \in \mathfrak{B} $.
Then,
\begin{equation*}
x \leq y  \qquad \mbox{if and only if} \qquad x(v) \leq y(v), \ \forall \ v\in V,
\end{equation*}
where the left-hand relation denotes the (strong) Bruhat order and the right-hand relation denotes the dominance order.  
\end{prop}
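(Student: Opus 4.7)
The plan is to prove the two implications separately, using different techniques for each.

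For the forward direction $(\Rightarrow)$, I would induct on $\ell(y) - \ell(x)$, using the subword property to find a Bruhat cover $y \gtrdot y'$ with $x \leq y'$. Writing $y' = s_{\beta,k}\, y$ for some affine reflection, one computes, for every $v \in V$,
\[
y(v) - y'(v) \;=\; c_{\beta,k}(v)\cdot \beta
\]
where $c_{\beta,k}(v) = \langle y(v),\beta\rangle - k$ (up to a root-length constant, which in type $A$ is harmless). The hypothesis $y \in \mathfrak{B}$ together with the cover condition $y' < y$ forces $\beta$ to be a positive (finite) root and $c_{\beta,k}(v) \geq 0$ for every vertex. Hence $y'(v) \leq y(v)$ in dominance order, and composing with the inductive hypothesis $x(v) \leq y'(v)$ closes the induction. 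The intermediate element $y'$ need not belong to $\mathfrak{B}$, but this is not an obstruction because the dominance order is defined on the entire coweight lattice.

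For the backward direction $(\Leftarrow)$, I would pass to the concrete realization of $W_{\mathrm{aff}}$ in type $A$ as the group $\widetilde{S}_{n+1}$ of affine permutations of $\bbZ$. Under this identification the alcove $x$ is encoded by its window $(x(1),\dots,x(n+1))$, the vertex values $x(v)$ for $v \in V$ can be read off from partial sums of this window, and the condition $x \in \mathfrak{B}$ cuts out an explicit subregion of $\widetilde{S}_{n+1}$. The hypothesis $x(v) \leq y(v)$ for all $v \in V$ then translates into a system of componentwise inequalities between the windows of $x$ and $y$, and I would invoke an appropriate affine analogue of the Ehresmann/tableau criterion to conclude $x \leq y$ in Bruhat order.

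The main obstacle is the backward direction. The hypothesis consists of only $|V| = n+1$ vectorial inequalities, which is a priori much weaker information than the full Bruhat order, so the argument must leverage both the specific combinatorics of type $A$ (affine permutations and window notation) and the dominance hypothesis. The restriction to the dominant cone $\mathfrak{B}$ is essential: already in finite $S_3$, comparing $x\lambda$ and $y\lambda$ for a dominant $\lambda$ can yield the opposite inequality to Bruhat order, so any proof must use the condition $x,y \in \mathfrak{B}$ in a crucial way. Translating the vertex-dominance data cleanly into window combinatorics, in such a way that the affine tableau criterion can be applied, is the delicate step.
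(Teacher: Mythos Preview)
Your forward direction contains a real gap. You correctly observe that if $y\in\mathfrak{B}$ and $y'\lessdot y$, then $y'(v)\le y(v)$ in dominance (the reflecting hyperplane $H_{\beta,k}$ with $\beta\in\Phi^{+}$ must satisfy $k\ge 0$ and $\langle y(v),\beta\rangle\ge k$, since $\langle\mathcal{A}_{\mathrm{id}},\beta\rangle\in(-1,0)$ and $\langle\mathcal{A}_{y},\beta\rangle\ge 0$). But the next step of your induction needs $x(v)\le y'(v)$, and for this you must apply the inductive hypothesis to the pair $(x,y')$. That hypothesis is only available when \emph{both} elements lie in $\mathfrak{B}$, and $y'$ need not. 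Your remark that ``the dominance order is defined on the entire coweight lattice'' misidentifies the obstruction: the problem is not that $y'(v)$ fails to be a coweight, but that the statement you are proving by induction is not asserted for $y'\notin\mathfrak{B}$. One could try to repair this by showing that a saturated Bruhat chain from $x$ to $y$ can always be chosen inside ${}^{\mathrm{f}}\mathcal{D}$, but that is a separate result you have not supplied.

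Your backward direction is on the right track in spirit but leaves out precisely the hard part. Passing to affine permutations and invoking ``an appropriate affine analogue of the Ehresmann/tableau criterion'' means invoking the Bj\"orner--Brenti criterion $x\le y\iff x[i,i']\le y[i,i']$ for all $i,i'$. The substantive work---which constitutes essentially all of the paper's \S6---is to rewrite the counts $y[i,i']$ in closed form, show they are governed by quantities $(\mu+\varpi_{i}'-w^{-1}\varpi_{j-1}')_{\mathrm{dom}}$, and then prove that on the dominant cone the $(\cdot)_{\mathrm{dom}}$ simplifies away so that only the $n+1$ vertex comparisons remain. You have not indicated how the window data reduce to the vertex inequalities, and this reduction is where the dominance hypothesis on $x$ and $y$ is actually used.

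By contrast, the paper does not split into two implications. It takes the Bj\"orner--Brenti criterion (an equivalence from the start), massages $y[i,i']$ into $\sum_{m}\max(0,Q+(y^{(r,r')})_{m})$, converts this via a partition-theoretic lemma into a dominance comparison of $(\mu+\varpi_{r}'-w^{-1}\varpi_{r'-1}')_{\mathrm{dom}}$, and finally specializes to $x,y\in W_{\mathrm{aff}}^{+}$, where one shows that the relevant vectors already lie in $C^{-}$ so that applying $w_{0}$ removes the $(\cdot)_{\mathrm{dom}}$. Both directions fall out simultaneously.
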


This is the content of \Cref{prop: main tool}. 
The criterion for the entire group was originally mentioned by Schremmer in \cite[\S 3]{schremmer2024affine}. 
We warmly thank him for sharing a sketch of the proof,  which we have expanded into the detailed version presented in 
\Cref{sec: criterio}.

\subsection*{Tiling lower intervals} 

To describe our next result, we first introduce some notation.  
For $ w \in W_\mathrm{aff}$, where $W_\mathrm{aff}$ is an affine Weyl group, we denote by $ \mathcal{A}_w $ the corresponding alcove.
Let $ (\Lambda^{\vee})^+ $ denote the set of dominant coweights.
We define  
\[
\mathcal{F} = \{ a \in W_\mathrm{aff} \mid \mathcal{A}_a \subset -\Pi^+ \},
\]
and for $\lambda\in (\Lambda^{\vee})^+$ and $a\in\mathcal{F}$, we let $ \theta_a(\lambda) $ be the unique element in $ W_\mathrm{aff} $ satisfying  
$ \mathcal{A}_{\theta_a(\lambda)} = \mathcal{A}_{aw_0} + \lambda$.

The region $ \mathfrak{B} $ (resp. $ \varfrak{A} $) consists of elements in $ W_\mathrm{aff} $ of the form $ \theta_a(\lambda) $ (resp. $ \theta_{\mathrm{id}}(\lambda) $).  
Consider the Bruhat intervals of the form  
\[
\mathcal{I}_a(\lambda) := [\mathrm{id},\theta_a(\lambda)] = \{ x \in W_\mathrm{aff} \mid x \leq \theta_a(\lambda) \}.
\]
The following definition introduces the central combinatorial object of this paper.  

\begin{definition}  
The \emph{Paper Boat} associated with $ \lambda\in (\Lambda^{\vee})^+ $ and $ a\in \mathcal{F} $ is defined as  
\begin{equation}
PB_a(\lambda) := \mathcal{I}_a(\lambda)\setminus \bigcup_{\mu \lessdot \lambda} \mathcal{I}_a(\mu),
\end{equation}
where $ \mu \lessdot \lambda $ denotes a covering relation in the poset $ (\Lambda^{\vee})^+ $ ordered by dominance.
\end{definition}  

A key observation is that lower intervals for elements in $ \mathfrak{B} $ are invariant under the left action of $ W_\mathrm{f} $. This allows us to focus on their dominant part. The fundamental building blocks of our tiling are the Paper Boats.  
This structure is illustrated in \Cref{fig:introA,fig:introB}.  

The next theorem establishes that Paper Boats provide a tessellation of lower Bruhat intervals for elements in $ \mathfrak{B} $.
For the rest of the Introduction, all results stated will be for the affine Weyl group $W_\mathrm{aff}$ of type $A_n$.

\begin{lettertheorem}\label{thmA}
Let $ \lambda $ be a dominant coweight, and let $ a \in \mathcal{F} $.  
Then we have the following decomposition:  
\begin{equation}\label{eq: tiling}
\mathcal{I}_a(\lambda) = \bigsqcup_{\mu \leq \lambda} PB_a(\mu),
\end{equation}
where the union runs over all $ \mu \in (\Lambda^{\vee})^+ $ satisfying $ \mu \leq \lambda $ in the dominance order.  
\end{lettertheorem}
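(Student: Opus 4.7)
The plan is to reduce the tiling identity to its dominant part via $W_\mathrm{f}$-invariance, then translate every Bruhat comparison into the vertex dominance criterion provided by \Cref{prop:order}, and finally read off uniqueness from the lattice structure of the dominance order in type $A$. For every dominant coweight $\mu$ the lower interval $\mathcal{I}_a(\mu)$ is invariant under the left action of $W_\mathrm{f}$ (as pointed out in the paragraph introducing the Paper Boats), hence each $PB_a(\mu)$ is $W_\mathrm{f}$-invariant and so is the equality \eqref{eq: tiling}. It therefore suffices to verify it for $x \in \mathfrak{B}$. A short application of \Cref{prop:order} to $\theta_a(\mu), \theta_a(\nu) \in \mathfrak{B}$ establishes monotonicity: $\mu \leq \nu$ forces $\theta_a(\mu) \leq \theta_a(\nu)$, because the vertex condition collapses to $(aw_0)(v)+\mu \leq (aw_0)(v)+\nu$.

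To cover $\mathcal{I}_a(\lambda)$, I would fix $x \in \mathfrak{B}\cap\mathcal{I}_a(\lambda)$ and consider the finite, non-empty set $T_x = \{\mu \in (\Lambda^{\vee})^+ : \mu\leq\lambda,\ x\leq\theta_a(\mu)\}$. Any minimal $\mu_0 \in T_x$ satisfies $x \in PB_a(\mu_0)$, since every $\nu\lessdot\mu_0$ is excluded from $T_x$ by minimality. For disjointness, suppose $x \in PB_a(\mu)\cap PB_a(\mu')$ with $\mu \ne \mu'$: if $\mu$ and $\mu'$ are comparable, say $\mu<\mu'$, a saturated chain in $[\mu,\mu']$ yields some $\nu \lessdot \mu'$ with $\mu \leq \nu$, contradicting $x\notin\mathcal{I}_a(\nu)$ via monotonicity.

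The genuine content is the incomparable case, handled by the fact that in type $A$ the dominance order on $(\Lambda^{\vee})^+$ is a lattice. Setting $\mu'' := \mu \wedge \mu'$, which is strictly below both $\mu$ and $\mu'$, and using \Cref{prop:order}, the hypotheses $x \leq \theta_a(\mu)$ and $x \leq \theta_a(\mu')$ translate to $x(v)-(aw_0)(v) \leq \mu$ and $x(v)-(aw_0)(v) \leq \mu'$ for every vertex $v$; combining these with the lattice meet yields $x(v)-(aw_0)(v) \leq \mu''$, and \Cref{prop:order} in reverse gives $x \leq \theta_a(\mu'')$, i.e.\ $\mu'' \in T_x$. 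Any $\nu \lessdot \mu$ with $\mu''\leq\nu$ then contradicts $x\in PB_a(\mu)$.

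The main obstacle is this last implication: one must verify that the meet of two dominant coweights in $(\Lambda^{\vee})^+$ is compatible with the generally non-dominant differences $x(v)-(aw_0)(v)$ appearing in the vertex criterion, which is where the type-$A$ hypothesis intervenes both through \Cref{prop:order} (proved only in type $A$) and through the explicit lattice structure of dominance on partitions. Once this compatibility is in place, the remaining steps are formal manipulations of the definitions.
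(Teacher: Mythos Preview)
Your argument is correct and mirrors the paper's proof (\Cref{thm: loweint as pbs}) essentially step by step: the paper also establishes the covering by picking a minimal $\mu$ with $w\le\theta_a(\mu)$, and handles disjointness via the meet identity $\mathcal{I}_a(\mu)\cap\mathcal{I}_a(\mu')=\mathcal{I}_a(\mu\wedge\mu')$ (their \Cref{lem: meet pesos}), whose proof is exactly your vertex computation using \Cref{prop:order} together with Stembridge's fact that $\wedge^+$ is the restriction of $\wedge$. One cosmetic point: the vertices of $\theta_a(\mu)$ are $w_0a\,\sigma_\mu^{-1}(-\varpi_i)+\mu$, not $(aw_0)(v)+\mu$, but since $\sigma_\mu$ is constant along $\mathfrak{I}_\lambda$ this does not affect your argument.
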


In our previous work, we established a tiling \cite[Theorem A]{castillo2023size} for lower Bruhat intervals $\mathcal{I}_\mathrm{id}(\lambda)$ (\ie for elements in $\varfrak{A}$).
This tiling led to a formula for the size of these intervals due to two key properties.
First, among all intervals there is only one type of tile, consisting of the alcoves of the finite Weyl group $ W_\mathrm{f} $.
Second, for a given $\mathcal{I}_\mathrm{id}(\lambda)$, the number of tiles in the corresponding tiling corresponds to the number of lattice points in the permutohedron $\mathsf{P}(\lambda)$.

The tiling presented in \Cref{thmA}, this time for general elements in $\mathfrak{B}$, retains similar properties.
The first difference is that now there is more than one type of tile.
In \Cref{def: zones} we introduce a finite partition $\mathcal{Z}$ of the dominant coweights.
The following theorem shows that, among all possible tilings,  this new tiling features a finite number of distinct types of tiles.

\begin{lettertheorem}\label{thmB}
Let $a\in\mathcal{F}$ and $Z\in\mathcal{Z}$.
If $\lambda,\mu\in Z$, then the dominant alcoves in $PB_a(\lambda)$ are a translation of the dominant alcoves in $PB_a(\mu)$.
In particular, $|PB_a(\lambda)|=|PB_a(\mu)|.$
\end{lettertheorem}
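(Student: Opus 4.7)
The plan is to use \Cref{prop:order} to rewrite membership in $PB_a(\lambda)$ as a system of inequalities on the vertices of the alcove of $x$, and then observe that translating by $\mu-\lambda$ carries this system verbatim into the one defining $PB_a(\mu)$ whenever $\lambda$ and $\mu$ lie in the same zone.

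From the defining identity $\mathcal{A}_{\theta_a(\lambda)} = \mathcal{A}_{aw_0} + \lambda$ one reads off $\theta_a(\lambda) = t_\lambda \cdot aw_0$, so that $\theta_a(\lambda)(v) = aw_0(v) + \lambda$ for every vertex $v \in V$. By \Cref{prop:order}, a dominant $x \in \mathfrak{B}$ satisfies $x \leq \theta_a(\lambda)$ if and only if
\begin{equation*}
x(v) \leq aw_0(v) + \lambda \qquad \text{for all } v \in V.
\end{equation*}
Hence a dominant $x$ lies in $PB_a(\lambda)$ precisely when this system holds and, for every cover $\lambda' \lessdot \lambda$ in $(\Lambda^{\vee})^+$, at least one of the strictly tighter inequalities $x(v) \leq aw_0(v) + \lambda'$ fails.

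Setting $\delta := \mu - \lambda$ and $\tau(x) := t_\delta \cdot x$, one has $\tau(x)(v) = x(v) + \delta$, so the inequality $x(v) \leq aw_0(v) + \lambda$ is equivalent to $\tau(x)(v) \leq aw_0(v) + \mu$; likewise the failure of $x(v) \leq aw_0(v) + \lambda'$ is equivalent to the failure of $\tau(x)(v) \leq aw_0(v) + (\lambda' + \delta)$. Thus $\tau$ will be a bijection between the dominant parts of $PB_a(\lambda)$ and $PB_a(\mu)$ provided that (i) the shift $\lambda' \mapsto \lambda'+\delta$ bijects the set of covers of $\lambda$ with that of $\mu$ in $(\Lambda^{\vee})^+$, and (ii) translation by $\delta$ carries the dominant alcoves of $PB_a(\lambda)$ to dominant alcoves. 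The cardinality equality $|PB_a(\lambda)|=|PB_a(\mu)|$ then follows at once.

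The task therefore reduces to extracting (i) and (ii) from the hypothesis $\lambda,\mu\in Z$. Property (i) should be the natural combinatorial content of a zone: the local poset structure of $(\Lambda^{\vee})^+$ around $\lambda$ is constant on $Z$, and I expect this to follow by direct inspection of \Cref{def: zones}. Property (ii) is the main obstacle I anticipate, since $\delta$ need not be dominant and could a priori push a boundary alcove of $PB_a(\lambda)$ across a wall of the dominant chamber; the zones must be engineered precisely so that, for every boat $PB_a(\lambda)$ with $\lambda \in Z$, all its dominant alcoves stay sufficiently far from the relevant walls of the dominant chamber to absorb any shift by $\mu-\lambda$ with $\mu\in Z$. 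Verifying that \Cref{def: zones} does indeed impose both (i) and (ii) is where the real work of the proof lies.
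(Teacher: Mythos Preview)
Your plan is exactly the paper's own strategy (\Cref{thm: paper boat}): define $f:PB_a^+(\lambda)\to PB_a^+(\mu)$ as translation by $\mu-\lambda$ and verify that $f(w)$ is dominant, that $f(w)\leq\theta_a(\mu)$, and that $f(w)\not\leq\theta_a(\mu-\alpha)$ for each $\alpha\in\Phi^+(\mu)$; swapping the roles of $\lambda$ and $\mu$ gives the inverse, and the cardinality claim follows via $|PB_a|=|W_\mathrm{f}|\cdot|PB_a^+|$.

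On the two items you flagged as the ``real work'':

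Your (i) is immediate from the type-$A$ description of coverings in $(\Lambda^+,\leq)$ recalled at the start of \Cref{sec: main results}: the set $\Phi^+(\lambda)$ depends only on which coordinates $\langle\lambda,\alpha_j\rangle$ equal $0$, equal $1$, or are $\geq 2$, and this is precisely the zone datum (\Cref{deltaphi}), so $\Phi^+(\lambda)=\Phi^+(\mu)$ for $\lambda,\mu\in Z$.

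Your (ii) is the substantive step, and the paper isolates it as \Cref{lemma Trasl is Domin}: for every $w\in PB_a^+(\lambda)$ and every vertex $\nu\in V(w)$ one has $\langle\nu-\lambda,\alpha\rangle\geq -1$ for all $\alpha\in\Delta$, i.e.\ $\mathcal{A}(PB_a^+(\lambda))\subset C^+ +(\lambda-\rho)$. This is proved by contradiction using \Cref{prop: main tool}: if some vertex of $w$ sat two or more steps below $\lambda$ in the $\alpha$-direction, one deduces $w\leq\theta_a(\lambda-\alpha)$, violating $w\in PB_a(\lambda)$. This lemma is precisely what makes the ``$0,1,\geq 2$'' partition the correct one: if $\langle\mu,\alpha\rangle\geq 1$ it forces every vertex of $f(w)$ to the nonnegative side of $H_\alpha$, while if $\langle\mu,\alpha\rangle=0$ then also $\langle\lambda,\alpha\rangle=0$ and dominance of $f(w)$ along $\alpha$ follows directly from that of $w$. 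Note also that (ii) must be established \emph{before} you invoke \Cref{prop:order} on $\tau(x)$, since that criterion requires both elements to lie in $W_\mathrm{aff}^+$.

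One small inaccuracy: your formula $\theta_a(\lambda)=t_\lambda\cdot aw_0$ omits the factor $\sigma_\lambda^{-1}\in\Omega$ (see \Cref{lem: amigos algebraic def}), so $\theta_a(\lambda)(v)=aw_0(v)+\lambda$ need not hold for each individual $v\in V$. It does hold at the level of vertex \emph{sets}, which suffices; the paper accordingly uses the congruence-class formulation of \Cref{prop: main tool} rather than the per-vertex one.
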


It is worth noting that the definition of Paper Boats can be generalized  to all affine Weyl groups and to all elements of $ \mathfrak{C} $.  
We conjecture that there is a tiling as in \Cref{thmA} with a property as in \Cref{thmB}, at this level of generality (see Conjectures \ref{conj: lowerint as pbs} and \ref{conj: paperboat zones}).

\subsection*{Formulas for sizes of intervals.}

Combining \Cref{thmA} and \Cref{thmB}, we derive the following theorem and our main counting formula.
For a dominant coweight $\lambda$, let $\mathcal{L}_\lambda$ be the component of $\Lambda^\vee$ (modulo the coroot lattice) containing $\lambda$.
We let $\mathfrak{I}_\lambda$ be the dominant lattice points (within $\mathcal{L}_\lambda$) in the permutohedron $\mathsf{P}(\lambda)$, see \Cref{eq: ideal as latticepoints}.

\begin{lettertheorem}\label{thmC}
Let $\lambda\in(\Lambda^\vee)^+$ and let $a\in\mathcal{F}$.
Then
\begin{equation*}
|\li{\lambda}| = \sum_{Z\in\mathcal{Z}} c_a(Z) |\mathfrak{I}_\lambda \cap Z |,
\end{equation*}
where $c_a(Z)$ is $|PB_a(\mu)|$ for any $\mu\in Z$.
\end{lettertheorem}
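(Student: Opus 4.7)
My plan is to combine \Cref{thmA} and \Cref{thmB} directly and then re-express the resulting sum in terms of lattice points in the permutohedron. First, I would invoke \Cref{thmA} to write the disjoint decomposition $\mathcal{I}_a(\lambda) = \bigsqcup_{\mu \leq \lambda} PB_a(\mu)$, where the union is over dominant coweights $\mu \leq \lambda$ in the dominance order. Taking cardinalities and then regrouping the sum according to the finite partition $\mathcal{Z}$ introduced in \Cref{def: zones} will give
$$
|\mathcal{I}_a(\lambda)| \;=\; \sum_{\mu \leq \lambda} |PB_a(\mu)| \;=\; \sum_{Z \in \mathcal{Z}} \;\sum_{\substack{\mu \in Z \\ \mu \leq \lambda}} |PB_a(\mu)|.
$$

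Next, I would apply \Cref{thmB}: for every $\mu \in Z$ the cardinality $|PB_a(\mu)|$ is constant and equal to $c_a(Z)$, so the inner sum collapses to $c_a(Z) \cdot |\{\mu \in Z : \mu \leq \lambda\}|$. The remaining step is to identify $\{\mu \in (\Lambda^\vee)^+ : \mu \leq \lambda\}$ with $\mathfrak{I}_\lambda$. For dominant coweights, $\mu \leq \lambda$ in the dominance order is equivalent to $\mu$ lying in the permutohedron $\mathsf{P}(\lambda)$; moreover, the condition that $\lambda - \mu$ be a non-negative integer combination of positive coroots forces $\mu$ to belong to the coset $\mathcal{L}_\lambda$. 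Combined with the description of $\mathfrak{I}_\lambda$ recorded in \Cref{eq: ideal as latticepoints}, this yields the equality of sets, and hence $|\{\mu \in Z : \mu \leq \lambda\}| = |\mathfrak{I}_\lambda \cap Z|$ for every $Z$. Substituting back produces the claimed formula.

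The whole argument is essentially a clean repackaging of \Cref{thmA} and \Cref{thmB}, so there is no serious technical obstacle beyond those two results; the only delicate point is the classical identification between the dominance order ideal below $\lambda$ and the set $\mathfrak{I}_\lambda$ of dominant lattice points of $\mathsf{P}(\lambda)$ inside $\mathcal{L}_\lambda$, which is the convex-geometric bridge that turns the tiling into the lattice-point count emphasized in the introduction.
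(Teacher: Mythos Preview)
Your argument is correct and matches the paper's proof (Corollary~\ref{cor:weigthed_sum}) essentially step for step: take cardinalities in \Cref{thmA}, regroup by zones, and apply \Cref{thmB}. The one unnecessary detour is your final ``delicate point'': $\mathfrak{I}_\lambda$ is \emph{defined} as the order ideal $\{\mu\in(\Lambda^\vee)^+ : \mu\leq\lambda\}$, so no permutohedron argument is required here---the description via \eqref{eq: ideal as latticepoints} only becomes relevant later, in \Cref{sec: latticepoints}.
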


As we mentioned above, the first difference between our previous tiling for elements in $\varfrak{A}$ \cite{castillo2023size} and our new tiling for elements in $\mathfrak{B}$ \eqref{eq: tiling}, is that we now have to deal with the sizes of Paper Boats $c_a(Z)$.
For example in \Cref{fig:introA} (resp. \Cref{fig:introB}), the blue tile at the bottom, for instance, has size five (resp. three).
The second difference is that, after a reinterpretation of this formula as a \emph{weighted lattice point count} (see \eqref{eq:weighted_lattice_count}), the number of tiles, of the same type, is equal to the number of lattice points in slices of permutohedra; whereas in \cite{castillo2023size} the relevant polytopes where always permutohedra.

\hfill

\noindent\textbf{Sizes of Paper Boats:}
This is an intriguing problem even in type $\widetilde{A}_n$.
In \Cref{prop:BarquitoGenerico} we obtain a closed formula for the size of Paper Boats $c_a(Z)$, when $Z$ is sufficiently deep inside the dominant chamber; their size is equal to $(n+1)!^2$, which does not depend on $a$.
This is particularly striking given that the shapes of Paper Boats can vary dramatically. 
See the green regions highlighted in both \Cref{fig:introA,fig:introB} for type $\widetilde{A}_2$ and  \Cref{fig:PPBB} for $\widetilde{A}_3$. 
More generally, we conjecture that, for any type, the size of Paper Boats associated with coweights sufficiently deep inside the dominant chamber equals $|W_\mathrm{f}|^2$.

\hfill

\noindent\textbf{Counting lattice points:}
In \Cref{sec: latticepoints}, we see each $\mathfrak{I}_\lambda\cap Z$ as the lattice points in convex polytopes.
These lattice points are depicted in \Cref{fig:introC}, where each color represents a different zone $Z\in\mathcal{Z}$.
One technical difficulty is that we have to consider polytopes whose vertices are not lattice points (\Cref{prop:vertices}) but rather rational points.
Using general results about counting lattice points in polyhedra \cite{barvinok2008integer}, we prove that the number of lattice points in a rational polytope agrees with a quasi-polynomial, leading to the following theorem.

\begin{lettertheorem}\label{thmD}
Let $\lambda\in(\Lambda^\vee)^+$ and $(\lambda_1,\ldots,\lambda_n)$ its coordinates in the basis of fundamental coweights.
For a fixed $ a \in \mathcal{F} $ the function $ q_{a}: \mathbb{N}^{n} \to \mathbb{N} $ defined as $( \lambda_{1}, \dots, \lambda_{n} )\mapsto | \mathcal{I}_{a}(\lambda)|$,
agrees with a \textbf{quasi-polynomial} of degree $ n $ on the subset $ \mathbb{N}_{\geq 2}^{n}$ where each coordinate is at least two.
Moreover, the top homogeneous component is a polynomial and it is independent of $ a $.
\end{lettertheorem}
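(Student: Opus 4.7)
The plan is to deduce Theorem D directly from Theorem C combined with general parametric lattice point enumeration. Theorem C gives a finite decomposition
\[
|\li{\lambda}| = \sum_{Z \in \mathcal{Z}} c_a(Z)\, |\mathfrak{I}_\lambda \cap Z|,
\]
in which the coefficients $c_a(Z)$ are constants in $\lambda$. Since the partition $\mathcal{Z}$ is finite, it suffices to show that each function $\lambda\mapsto |\mathfrak{I}_\lambda \cap Z|$ agrees with a quasi-polynomial of degree at most $n$ on a cofinite subset of $\mathbb{N}^n$, and to identify the top component.

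First I would interpret each count $|\mathfrak{I}_\lambda \cap Z|$ as the number of lattice points in a rational polytope depending linearly on $\lambda$. The permutohedron $\mathsf{P}(\lambda)$ is cut out by half-spaces whose right-hand sides are linear in the coordinates $\lambda_1,\dots,\lambda_n$, whereas the dominance inequalities and the inequalities defining the zone $Z$ are fixed. Intersecting these gives a parametric family of rational polytopes $\mathcal{P}_Z(\lambda)$, to which the lattice condition cutting out $\mathcal{L}_\lambda$ (which in type $A$ depends only on the residue class of $\lambda$ modulo the coroot lattice, hence periodically on $\lambda_1,\dots,\lambda_n$) is added. The general parametric Ehrhart--Barvinok theory cited in the introduction then yields that, on each chamber of linearity of the vertex map of $\mathcal{P}_Z(\lambda)$, the function $\lambda \mapsto |\mathfrak{I}_\lambda \cap Z|$ is a quasi-polynomial of degree $\dim \mathcal{P}_Z(\lambda) \leq n$. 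The ``cofinite'' qualifier absorbs the boundary between these chambers, which lies on finitely many affine hyperplanes in $\mathbb{N}^n$.

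Summing over the finitely many zones gives that $q_a$ is a quasi-polynomial of degree at most $n$. For the top component I would argue as follows. It is a classical fact that the top-degree component of an Ehrhart quasi-polynomial is the (relative) volume, hence an honest polynomial. Thus the degree-$n$ component of $q_a$ equals
\[
\sum_{Z \in \mathcal{Z}} c_a(Z)\, \relvol\bigl(\mathsf{P}(\lambda) \cap Z\bigr).
\]
By Proposition \ref{prop:BarquitoGenerico}, every zone $Z$ sufficiently deep inside the dominant chamber satisfies $c_a(Z) = ((n+1)!)^2 = |W_\mathrm{f}|^2$, independently of $a$, and the remaining ``boundary zones'' of $\mathcal{Z}$ have intersection with $\mathsf{P}(\lambda)$ of dimension strictly less than $n$, so they contribute nothing to the top component. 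Consequently the degree-$n$ component equals $|W_\mathrm{f}|^2 \cdot \relvol\bigl(\mathsf{P}(\lambda)\bigr)$, which is a polynomial of degree exactly $n$ in $\lambda$ and manifestly independent of $a$. This also confirms the degree is exactly $n$, not just at most $n$.

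The main technical obstacle I anticipate is the dimension bookkeeping for the boundary zones: one must verify that every $Z \in \mathcal{Z}$ not covered by Proposition \ref{prop:BarquitoGenerico} lies in a region whose asymptotic intersection with $\mathsf{P}(\lambda)$ has dimension strictly smaller than $n$, so that those contributions are genuinely subleading. This requires a careful analysis of the geometry of the partition $\mathcal{Z}$ (defined in \Cref{def: zones}) and of how the walls between zones sit inside the permutohedron as $\lambda\to\infty$ within the dominant cone.
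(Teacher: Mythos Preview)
Your approach is essentially the paper's: reduce via Theorem~C to lattice-point counts in the polytopal zones, apply Ehrhart-type theory to obtain quasi-polynomiality, and use \Cref{prop:BarquitoGenerico} plus the lower-dimensionality of the boundary zones to identify the top component. The paper carries this out concretely via a translation trick (writing $\mathsf{P}^+_{\boldsymbol{i}}(\lambda)$ as a translated face of $\mathsf{P}^+(\lambda-\nu(\boldsymbol{i}))$), a Minkowski decomposition $\mathsf{P}^+(\mu)=\sum_i \mu_i\,\mathsf{P}^+(\varpi_i)$, and McMullen's theorem, together with a careful analysis of the vertex lattice of $\mathsf{P}^+(\lambda)$ through the minors of the Cartan matrix; your invocation of ``general parametric Ehrhart--Barvinok theory'' is a reasonable substitute for this package, though less explicit.

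Two points need correction. First, your justification for the cofinite qualifier is wrong: chamber walls are hyperplanes, and for $n\ge 2$ the complement of finitely many hyperplanes in $\mathbb{N}^n$ is \emph{not} cofinite. The correct reason, as in the paper, is that all zones $\mathsf{P}^+_{\boldsymbol{i}}(\lambda)$ are simultaneously nonempty precisely when $\mathsf{P}^+_{\boldsymbol{2}}(\lambda)\neq\emptyset$, and this condition (essentially $\langle\varpi_i,\lambda\rangle\ge\langle\varpi_i,2\rho\rangle$ for all $i$) fails only on a bounded, hence finite, subset of $\Lambda^+$; outside that set a single quasi-polynomial applies. Second, your top-degree formula is off by a factor of $|W_\mathrm{f}|$: since $\mathfrak{I}_\lambda$ consists of \emph{dominant} lattice points, the relevant volume is $\vol(\mathsf{P}^+(\lambda))=\vol(\mathsf{P}(\lambda))/|W_\mathrm{f}|$, and the leading term is $|W_\mathrm{f}|^2\cdot\vol(\mathsf{P}^+(\lambda))=|W_\mathrm{f}|\cdot\vol(\mathsf{P}(\lambda))$, not $|W_\mathrm{f}|^2\cdot\vol(\mathsf{P}(\lambda))$.
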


A quasi-polynomial on $ \mathbb{Z}^{n} $ of degree $ d $ is a function that agrees with a polynomial function of degree $ d $ on cosets of a subgroup $ H $ of $ \mathbb{Z}^{n} $ of finite index, see \Cref{def:quasi_polynomial}.
The most basic example is the floor function $ \lfloor n/2 \rfloor $.

The quasi-polynomial $q_a$ is obtained as a linear combination of quasi-polynomials, and the coefficients in this combination are the sizes of different Paper Boats.
But \Cref{thmD} is not the end of the story.
After extensive computations, we have consistently found that $q_a$ is actually a polynomial.
This is quite unexpected as the quasi-polynomials composing $q_a$ are definitely not polynomials in general.
It suggests the presence of algebraic relations between the sizes $c_a(Z)$ to be discovered.

Furthermore, in every case our computations show that the polynomial is also geometric.
To be more precise, the polynomials are rational combinations of volume polynomials from faces of permutohedra.

\begin{conjecture}\label{superconj}
For any affine Weyl group  the map  $ q_{a} $ is a geometric polynomial. 
\end{conjecture}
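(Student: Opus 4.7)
The plan is to split \Cref{superconj} into two sub-claims: (i) $q_a$ is a genuine polynomial, upgrading the quasi-polynomiality of \Cref{thmD}; and (ii) this polynomial lies in the rational span of volume polynomials of faces of the permutohedron $\mathsf{P}(\lambda)$. The starting point is the decomposition from \Cref{thmC},
\[
q_a(\lambda) \;=\; \sum_{Z \in \mathcal{Z}} c_a(Z)\,|\mathfrak{I}_\lambda \cap Z|,
\]
in which each slice count $\lambda \mapsto |\mathfrak{I}_\lambda \cap Z|$ is already a quasi-polynomial by the Ehrhart-type argument underlying \Cref{thmD}.

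For step (i), since the individual summands are genuinely quasi-polynomial but the total is conjecturally polynomial, the non-polynomial periodic parts must cancel across zones. The task therefore reduces to uncovering linear relations among the Paper Boat sizes $c_a(Z)$ that force this cancellation. Two natural sources present themselves. First, Möbius inversion on the dominance poset applied to the defining identity $PB_a(\lambda) = \li{\lambda} \setminus \bigcup_{\mu \lessdot \lambda} \li{\mu}$ should yield nontrivial algebraic constraints relating $c_a(Z)$ across zones. Second, under the Kazhdan--Lusztig interpretation available on $\mathfrak{C}$, the sizes $c_a(Z)$ are expected to equal dimensions of certain graded Hom-spaces between Soergel bimodules, and symmetries such as the self-duality of the lowest two-sided cell could enforce the required identities. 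A dual approach is to verify polynomiality first by showing that the generating function $\sum_\lambda q_a(\lambda) t^\lambda$ is rational with an explicit polynomial denominator dictated by the structure of $\mathcal{Z}$, and then extract the coefficient relations a posteriori.

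For step (ii), once polynomiality is secured, I would adapt the strategy of \cite{castillo2023size}, where the case $a=\mathrm{id}$ is handled via a Brion--Vergne style decomposition of the lattice-point count of $\mathsf{P}(\lambda)$ into contributions from its faces. The new feature is that the slices $\mathsf{P}(\lambda)\cap Z$ typically have rational, non-lattice vertices (\Cref{prop:vertices}), so care is needed when extracting polynomial volume data. An induction on face dimension looks natural: \Cref{thmD} already guarantees that the top-degree component is polynomial and independent of $a$, providing the base case, while lower-dimensional corrections should be handled by restricting the whole construction to the corresponding boundary permutohedra. The main obstacle is unquestionably step (i), the discovery of the algebraic relations among the $c_a(Z)$. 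The authors themselves describe polynomiality as \emph{quite unexpected}, which strongly suggests that no direct combinatorial derivation is visible within the current framework, and that an external input---either from representation theory or from a finer recursive description of how the shape of $PB_a(\lambda)$ depends on $Z$---will be needed before step (ii) can be carried through.
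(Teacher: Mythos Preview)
The statement you are addressing is \Cref{superconj}, which is a \emph{conjecture} that the paper explicitly leaves open. The paper does not contain a proof; it only verifies the conjecture for $\widetilde{A}_3$ by a direct computation (\Cref{thm: pols A3}) and establishes the top homogeneous part in type $\widetilde{A}_n$ (\Cref{thm:lattice_count}). So there is no ``paper's own proof'' to compare against.

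What you have written is not a proof but a research outline, and you candidly say so yourself: you identify step (i) as ``the main obstacle'' and note that ``no direct combinatorial derivation is visible within the current framework.'' That is an accurate assessment, and it matches the paper's own stance (see the discussion following \Cref{thmD} and \Cref{conj:miracle}, where the authors call the polynomiality ``truly miraculous'' and remark that ``there are strong numerical relations between the $c_a(\boldsymbol{i})$ that are yet to be discovered''). Your suggested mechanisms for step (i)---M\"obius inversion on the dominance poset, or Soergel-bimodule interpretations of the $c_a(Z)$---are plausible directions but are not carried out, and neither appears in the paper. For step (ii), the Brion--Vergne style argument you sketch is in the spirit of \cite{castillo2023size}, but the paper's only verified instance (\Cref{thm: pols A3}) proceeds instead by guessing the coefficients $\mu_{J,a}$ and then checking, zone by zone via \Cref{cor: RecurrenceImpliesLESS}, that the candidate polynomial satisfies the recursion \eqref{eq: coincide recurrence}. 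In short: your proposal is a reasonable plan of attack, but it is not a proof, and the paper does not claim one either.
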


We remark that the polinomiality and the geometricity parts of this conjecture are both difficult. 
Heuristically the polinomiality part suggests that $q_a$ is as simple as one could reasonably expect, while the geometricity part indicates that, among the full set of polynomials, these are exceptionally well-behaved and hint to a deeper connection with convex geometry.
In \Cref{thm: pols A3}, we verify this conjecture for type $\widetilde{A}_3$, and we also show it in type $\widetilde{A}_n$ for the top homogeneous part of $ q_{a} $ in \Cref{thm:lattice_count}.

We further conjecture an extension of \Cref{superconj} to lower intervals coming from elements in $\mathfrak{C}$ (rather than $\mathfrak{B}$).
See \Cref{conj: pols geom} for more details.
In this general form, it holds for types $\widetilde{A}_1$ (trivially), $\widetilde{A}_2$ (see \cite{LP}) and $\widetilde{B}_2$ (see \cite{batistelli2023kazhdan}).
We have not found a counterexample in types $\widetilde{A}_3$ and $\widetilde{B}_3$.

\hfill

\noindent\textbf{Structure of the paper.} \Cref{sec:preliminaries} contains a recollection of important facts about affine Weyl groups.
We also introduce a parametrization of the lowest two-sided Kazhdan-Lusztig cell and define the Paper Boat, our main object of study.
In \Cref{sec: main results} we present the proof of Theorems \ref{thmA}, \ref{thmB} and \ref{thmC} for type $\widetilde{A}_n$.
\Cref{sec:SizesPB} is devoted to the study the sizes of Paper Boats (also for type $\widetilde{A}_n$).
We compute the size of the most ``common'' Paper Boat, we give a recursive formula for the size of a Paper Boat in terms of lower Bruhat intervals and we give a criterion to test wether a given function matches the size of a lower interval.
We then use this criterion to establish \Cref{superconj} for type $\widetilde{A}_3$.
In \Cref{sec: latticepoints} we reinterpret \Cref{thmC} as a weighted lattice point count and invoke results from Ehrhart theory to obtain \Cref{thmD}.
Finally, \Cref{sec: criterio} contains the proof of \Cref{prop:order}, which is heavily used from \Cref{sec: main results} onward.

\hfill

\noindent\textbf{Acknowledgments.} 
We would like to thank Gaston Burrull and Felix Schremer for their helpful comments.
FC was partially supported by FONDECYT-ANID grant 1221133.
NL was partially supported by FONDECYT-ANID grant 1230247.
DP was partially supported by FONDECYT-ANID grant 1240199.
We thank the support of the MATH-AmSud program (Grant 24-MATH-01).

\section{The Paper Boat}\label{sec:preliminaries}
In this  section, we introduce the Paper Boat associated with any element in the lowest two-sided Kazhdan-Lusztig cell, which is the central object of study in this paper. To achieve this, we begin by recalling key facts about affine Weyl groups. Additionally, we present a parametrization for the elements within the lowest two-sided Kazhdan-Lusztig cell and some of its properties.

\subsection{Affine Weyl groups}\label{sec: affWeyl}

Let $\Phi$ be an irreducible root system of rank $n$, and let $E$ be the Euclidean space spanned by $\Phi$, with inner product $\langle -,-\rangle : E \times E \to \mathbb{R} $.
We denote by $\mathbf{0}$ the origin of $E$.
Let ${\Phi^+}$ be a choice of positive roots and $\Delta\subset\Phi^+$ the simple root basis. 
We denote by $\rho$ the half-sum of the positive roots.
Let ${\alpha^\vee=2\alpha/ \langle \alpha,\alpha \rangle }$ be the coroot corresponding to a root ${\alpha\in\Phi}$.
The set $\Phi^\vee =\{ \alpha^\vee \mid \alpha \in \Phi\}$ is also a root system.
The fundamental weights $\varpi_\alpha$ and fundamental coweights $\varpi_\alpha^\vee$ are defined by the equations ${\langle \varpi_\alpha,\beta^\vee \rangle =\langle \varpi_\alpha^\vee,\beta \rangle} =\delta_{\alpha\beta}$, for $\alpha,\beta\in\Delta$. 
Both sets form a basis of $E$.
We denote by $ \displaystyle \Lambda^\vee=\bigoplus_{\alpha\in\Delta}\bbZ\varpi_\alpha^\vee$ the set of coweights. 
We define
\begin{equation*}
C^+= \{ x \in E \mid\langle  x,\alpha \rangle \geq 0, \, \mbox{ for all }  \alpha\in\Delta \}.
\end{equation*}
We refer to ${(\Lambda^\vee)^+}\coloneqq \Lambda^\vee \cap C^+$ as the set of dominant coweights.

We denote by $\leq$ the dominance order on $\Lambda^\vee$, that is, ${\mu\leq\lambda}$ if the coordinates of ${\lambda-\mu}$ in the simple coroot basis $\{\alpha^\vee\}_{\alpha\in\Delta}$ are non-negative.

\begin{remark}
In this paper we use the symbol $\leq$ for different orders: for the standard ordering on real numbers, the Bruhat order on elements of the affine Weyl group, and for the dominance order on elements of the (co)weight lattice.
Since they are all on different objects, from context it should be clear which one we are using.
\end{remark}

For $\alpha \in \Phi$ and $k\in \mathbb{Z}$ we define the hyperplane
$H_{\alpha,k}=\{\lambda\in E\mid \langle \lambda,\alpha \rangle  =k\}.$
We denote by $s_{\alpha, k}$ the affine reflection along the hyperplane $H_{\alpha , k}$.
We write $H_\alpha=H_{\alpha,0}$ and $s_\alpha=s_{\alpha, 0}$.
The affine Weyl group $W_\mathrm{aff}$ of $\Phi$ is the group of affine transformations of $E$ generated by $s_{\alpha, k }$, for $\alpha \in \Phi$ and $k \in \mathbb{Z}$.

An alcove is a connected component of the space $E-\bigcup_{\alpha\in \Phi, k \in \mathbb{Z}  } H_{\alpha,k}. $
Let $\mathcal A$ be the set of all alcoves.
We define the fundamental alcove $\mathcal{A_{\text{id}}}\in \mathcal{A}$ by
\begin{equation*}
\mathcal{A}_\mathrm{id}=\{x\in E\mid -1<\langle x,\alpha \rangle <0, \ \forall\alpha\in\Phi^+\}. 
\end{equation*}
The walls of $\mathcal{A}_\mathrm{id}$ are the hyperplanes $\{  H_\alpha\mid \alpha\in\Delta \}$, together with $H_{\tilde{\alpha},-1}$, where $\tilde{\alpha}\in\Phi$ is the highest root.
We set ${s_0=s_{\tilde{\alpha},-1}}$ and ${S=\{s_0,s_\alpha\mid\alpha\in\Delta\}}$. 
Then the pair ${(W_\mathrm{aff},S)}$ is a Coxeter system, with length function $\ell$ and 
Bruhat order $\leq$.

The action of $W_\mathrm{aff}$ on $E$ gives a simply transitive action on $\mathcal{A}$.
Thus, we have a bijection $W_\mathrm{aff}\rightarrow  \mathcal{A}$ given by $w\mapsto\mathcal{A}_w=w(\mathcal{A}_\mathrm{id})$.
We define the vertices of an alcove $\mathcal{A}_w$ to be the vertices of its closure $\overline{\mathcal{A}_w}$, and denote this set by $V(w)$.
For $X\subset W_\mathrm{aff}$, we write
\begin{equation}
\mathcal{A}(X)=\bigcup_{x\in X}\overline{\mathcal{A}_x}.
\end{equation}
If $Y\subset E$ is such that $Y=\mathcal{A}(X)$, we write $\mathcal{A}^{-1}(Y)=X$.

We have a geometric interpretation of the length function.
Concretely, 
\begin{equation}\label{eq: geometric length}
\ell(w) = \# \{H=H_{\alpha,k} \mid H \mbox{ separates } \mathcal{A}_\mathrm{id} \mbox{ and } \mathcal{A}_w \}.
\end{equation}

The (finite) Weyl group $W_\mathrm{f}$ is the parabolic subgroup of $W_\mathrm{aff}$ generated by the set $S_\mathrm{f}=\{s_\alpha \mid \alpha \in \Delta \}$.
We denote by $w_0$ the longest element of $W_\mathrm{f}$.
The affine Weyl group can also be realized as a semi-direct product: ${W_\mathrm{aff}\cong \mathbb{Z}\Phi^\vee \rtimes W_\mathrm{f} }$, where the elements of $\mathbb{Z}\Phi^\vee$ act on $E$ as translations.
We define $W_\mathrm{aff}^+=\mathcal{A}^{-1}(C^+)$ and $W_\mathrm{aff}^-=\mathcal{A}^{-1}(C^-)$, where $C^-=-C^+$.
We have
\begin{equation}\label{eq: partition of affine}
W_\mathrm{aff}=\bigsqcup_{w\in W_\mathrm{f}}wW_\mathrm{aff}^+.
\end{equation}

Consider the extended affine Weyl group $W_\mathrm{e}=\Lambda^\vee \rtimes W_\mathrm{f}$ and the subgroup 
\begin{equation}
\Omega= \{  \sigma \in W_\mathrm{e} \mid \sigma(\mathcal{A}_\mathrm{id}) =\mathcal{A}_\mathrm{id}   \}.
\end{equation}

For $\sigma\in\Omega$   and $w\in W_\mathrm{aff}$ we write  $w_\sigma\coloneqq \sigma w \sigma^{-1}$.  We have $\sigma(\mathcal{A}_w)=\mathcal{A}_{w_\sigma}$.  For $X\subset W_\mathrm{aff}$ we  write $X_\sigma=\sigma X\sigma^{-1}\subset W_\mathrm{aff}$. 
Each $\sigma\in\Omega$  permutes the walls of $\mathcal{A}_\mathrm{id}$ and thus conjugation by $\sigma$ permutes the simple reflections $S$ of $W_\mathrm{aff}$,  \ie , $S_\sigma = S$.
Note that $(W_\mathrm{f})_\sigma\cong W_\mathrm{f}$ is the maximal parabolic subgroup of $W_\mathrm{aff}$ corresponding to  $(S_\mathrm{f})_\sigma=S\setminus\{(s_0)_\sigma\}$, whose longest element is $(w_0)_\sigma$.
In fact, all maximal parabolic subgroups of $W_\mathrm{aff}$ arise in this way.

It is well-known \cite[Prop~VI.2.3.6]{Bour46} that the set $\{-\sigma(\mathbf{0})\mid \sigma\in\Omega\}$ forms a complete system of representatives of $\Lambda^\vee/\bbZ\Phi^\vee$.
Hence, for $\lambda\in\Lambda^\vee$, there is a unique $\sigma_\lambda\in\Omega$ such that
\begin{equation}\label{eq: sigmaofcoweight}
\lambda \equiv -\sigma_\lambda(\mathbf{0}) \hspace{-0.2cm}\pmod{\bbZ\Phi^\vee}.
\end{equation}
Note that if $\lambda\in\bbZ\Phi^\vee$, then $\sigma_\lambda=\mathrm{id}$.

\begin{remark}\label{rmk: forget sigma}
The group $\Omega$ can be seen as a subgroup of the automorphism group of the Dynkin diagram of $W_\mathrm{aff}$.
Indeed, if $\sigma\in\Omega$,
by definition $\sigma$ is an isometry satisfying $\sigma(\mathcal{A}_\mathrm{id})=\mathcal{A}_\mathrm{id}$.
Therefore, it permutes the generators $S$ of the Coxeter system $(W_\mathrm{aff}, S)$ while respecting its defining relations.
\end{remark}

\begin{definition}\label{def: double coset reps}
Given $I,J\subset S$, we define
\begin{align}
\begin{split}
{}_I\mathcal{D}_J &= \{w\in W_\mathrm{aff} \mid w<sw  \mbox{ and }  w <wt \ \mbox{ for all } s\in I \mbox{ and } t\in J\}.
\end{split}
\end{align}
If the parabolics subgroups $W_I$ and $W_J$ are both finite, we define
\begin{equation}
{}^I\mathcal{D}^J = \{w\in W_\mathrm{aff} \mid w>sw  \mbox{ and }  w > wt \ \mbox{ for all  }s\in I \mbox{ and } t\in J\}.
\end{equation}
\end{definition}
It is immediate that $({}_I\mathcal{D}_J)^{-1}={}_J\mathcal{D}_I$ and $({}^I\mathcal{D}^J)^{-1}={}^J\mathcal{D}^I$.
When either of the two sets is empty (resp. $S_\mathrm{f}$) we will just leave a blank space (resp. put $\mathrm{f}$) instead.   
For instance,  $\mathcal{D}_\mathrm{f}={}_\emptyset\mathcal{D}_{S_\mathrm{f}}$ (and similarly for the other cases).
It is a fact \cite[Proposition 2.1]{SingularSoergelBimodules} that ${}_I\mathcal{D}_J$ (resp. ${}^I\mathcal{D}^J$) forms a complete system of representatives of $W_I \backslash W_\mathrm{aff} / W_J$, of minimal (resp. maximal) length.
For this reason, its elements are called the minimal (resp. maximal) double coset representatives.  

Using this notation we can rewrite  $W_\mathrm{aff}^-$ and $W_\mathrm{aff}^+$.\footnote{The rewriting \eqref{eq: dom and antidom coset reps} is due to our choice of fundamental alcove.
In the usual conventions (e.g.\cite{humphreys1992reflection, Bour46}) one would have ${}_\mathrm{f}\mathcal{D}=W_\mathrm{aff}^+$ and ${}^\mathrm{f}\mathcal{D}=W_\mathrm{aff}^-$.}
In formulas, we have
\begin{equation}\label{eq: dom and antidom coset reps}
{}_\mathrm{f}\mathcal{D}=W_\mathrm{aff}^- \qquad \mbox{and} \qquad {}^\mathrm{f}\mathcal{D}=W_\mathrm{aff}^+.
\end{equation}
For $\sigma\in\Omega$, we write $\mathcal{D}^\sigma=\mathcal{D}^{(S_\mathrm{f})_\sigma}$.
It is easy to check that $\mathcal{D}^\sigma=(\mathcal{D}^\mathrm{f})_\sigma=\sigma\mathcal{D}^\mathrm{f}\sigma^{-1}$.
The analogous equality holds for $\mathcal{D}_\sigma, {}_\sigma\mathcal{D}, {}^\sigma\mathcal{D}$ (defined similarly).
With this notation we have that $\mathcal{D}^\mathrm{f}=\mathcal{D}^\mathrm{id}$, however we will always prefer $\mathcal{D}^\mathrm{f}$ to avoid confusion.

\subsection{The lowest two-sided cell} \label{subsection LTSC}
In this section we collect some facts about the lowest two-sided cell $\mathfrak{C}$ of $W_\mathrm{aff}$.
The results preceding \Cref{def: amigos} can be found in \cite{shi1987two,shi1988two}, while the subsequent content is original.

For $x, y \in W_\mathrm{aff}$, if  $\ell (xy) = \ell (x) + \ell (y)$, we write $x\cdot y $ to denote both the element $xy\in W_\mathrm{aff}$ and the statement that its length is the sum of the lengths of $x$ and $y$.

We have
\begin{equation}
\mathfrak{C} = \{  x\cdot (w_0)_\sigma \cdot y\in W_\mathrm{aff} \mid x,y\in W_\mathrm{aff} \text{ and } \sigma\in \Omega  \}.
\end{equation}
Shi gave an alternative alcovic description of $\mathfrak{C}$ in terms of a certain hyperplane arrangement known as the Shi arrangement.
He showed that
\begin{equation} \label{LTSC in Shi}
\mathcal{A}(\mathfrak{C}) = \{ \lambda \in E \mid   0 \leq  \langle  \lambda , \alpha \rangle  \mbox{ or }  \langle  \lambda , \alpha \rangle \leq -1  \mbox{ for all } \alpha \in \Phi^{+} \}.
\end{equation}
Let $\Pi^+$ be the parallelepiped spanned by the fundamental coweights.
More precisely, 
\begin{equation}\label{def: paralele}
\Pi^+= \{ \lambda \in E \mid   0\leq  \langle  \lambda , \alpha \rangle \leq  1, \mbox{ for all } \alpha \in \Delta \}.
\end{equation}
We also define $\Pi^-=w_0(\Pi^+)$, which is the parallelepiped spanned by the negative fundamental coweights.
Both $\Pi^+$ and $\Pi^-$ are alcoved polytopes (as defined in \cite{lam2007alcoved, lam2018alcoved}), meaning they are unions of several alcoves.
Specifically, each consists of $|W_\mathrm{f}|/|\Omega|$ alcoves, as shown in \cite[\S 4]{lam2018alcoved}.

We define $\mathcal{F}=\mathcal{A}^{-1}(\Pi^-)$.
Note that for all $a\in\mathcal{F}$ we have $\ell(w_0a)=\ell(w_0)+\ell(a)$,
since $w_0\in W_\mathrm{f}$ and $a\in {}_\mathrm{f}\mathcal{D}$ (see \cite[Equation (2.12)]{BB}). 
Therefore, we have $w_0\cdot a$ and $a^{-1}\cdot w_0$. 
A direct computation yields
\begin{equation}\label{eq: parallelw0}
\mathcal{A}^{-1}(\Pi^+)=\{w_0\cdot a\mid a\in\mathcal{F}\}.
\end{equation}

For $a\in\mathcal{F}$, consider the set
\begin{equation}
\mathfrak{C}(a)= \{a^{-1}\cdot w_0\cdot z \mid z\in W_\mathrm{aff} \}.
\end{equation}
Note that $\mathfrak{C}(\mathrm{id})=W_\mathrm{aff}^+$.
A third description of the lowest two-sided cell is in \cite[Corollary 1.2]{shi1988two}:
\begin{equation}\label{eq: LTSC cc}
\mathfrak{C}=\bigsqcup_{\substack{a\in\mathcal{F}\\ \sigma \in \Omega  }} \mathfrak{C}(a)_\sigma.
\end{equation}
It is not hard to prove\footnote{Here, we slightly abuse notation, as the unions in \eqref{eq: param c plus} and \eqref{eq: LTSC alcovic param} are not strictly disjoint. They become disjoint only when considering the interiors of the corresponding sets.} that one can tesselate $C^+$ using the parallelepiped $\Pi^+$:
\begin{equation}\label{eq: param c plus}
C^+=\mathcal{A}(\mathfrak{C}(\mathrm{id}))=\bigsqcup_{\lambda\in(\Lambda^\vee)^+}\Pi^++\lambda. 
\end{equation}
Using \eqref{eq: LTSC cc}, we get an alcovic parametrization of $\mathfrak{C}$:
\begin{equation}\label{eq: LTSC alcovic param}
\mathcal{A}(\mathfrak{C})=\bigsqcup_{\substack{a\in\mathcal{F}\\ \sigma \in \Omega \\ \lambda\in(\Lambda^\vee)^+}}\,\sigma a^{-1}(\Pi^++\lambda).
\end{equation}
We will use this formula to derive an algebraic parametrization of $\mathfrak{C}$. Recall that for $\lambda$ a dominant coweight, the translation of any alcove by $\lambda$ is still an alcove.

\begin{definition}\label{def: amigos}
For $\lambda\in(\Lambda^\vee)^+$ and $a\in\mathcal{F}$, we define the element $\theta_a(\lambda)\in W_\mathrm{aff}^+$ as the unique element in $W_\mathrm{aff}$ such that
\begin{equation}
\mathcal{A}_{\theta_a(\lambda)}=\mathcal{A}_{w_0a}+\lambda.
\end{equation}
\end{definition}

We remark that the element $\theta_\mathrm{id}(\lambda)$ coincides with the element $\theta(\lambda)$ introduced in \cite{castillo2023size, PreCanonical}.
The elements $\theta_\mathrm{id}(\lambda)$ posses the following  property, see \cite[Lemma 2.4]{castillo2023size}.
\begin{lemma}\label{lemma: thetas are maximal}
Let $\lambda$ be a dominant coweight and let $\tau=\sigma_\lambda\in\Omega$ as in \eqref{eq: sigmaofcoweight}.
Then $\theta_\mathrm{id}(\lambda)\in {}^\mathrm{f}\mathcal{D}^\tau$.
Furthermore,
\begin{equation}
\{\theta_\mathrm{id}(\lambda) \mid \lambda\in(\Lambda^\vee)^+\} = \bigsqcup_{\sigma\in\Omega} {}^\mathrm{f}\mathcal{D}^\sigma.
\end{equation}
\end{lemma}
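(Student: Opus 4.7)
The plan is to verify the two claims geometrically via the length formula \eqref{eq: geometric length}, translating all Bruhat comparisons into statements about hyperplanes separating alcoves. Concretely, $s_\alpha \in D_L(w)$ iff $H_\alpha$ separates $\mathcal{A}_\mathrm{id}$ from $\mathcal{A}_w$, and $s \in D_R(w)$ iff the wall $w(H_s)$ of $\mathcal{A}_w$ separates $\mathcal{A}_w$ from $\mathcal{A}_\mathrm{id}$. Both tests then reduce to inspecting the alcove $\mathcal{A}_{\theta_\mathrm{id}(\lambda)} = \mathcal{A}_{w_0} + \lambda$ and its walls.

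For the left descents, the alcove $\mathcal{A}_{w_0}$ lies strictly inside the open dominant chamber, and adding a dominant coweight $\lambda$ only pushes it deeper into the same chamber. Since $\mathcal{A}_\mathrm{id}$ lies in the open antidominant chamber, each simple root hyperplane $H_\alpha$ ($\alpha \in \Delta$) separates $\mathcal{A}_\mathrm{id}$ from $\mathcal{A}_{\theta_\mathrm{id}(\lambda)}$, which gives $S_\mathrm{f} \subseteq D_L(\theta_\mathrm{id}(\lambda))$.

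For the right descents, I would first enumerate the walls of $\mathcal{A}_{\theta_\mathrm{id}(\lambda)}$: the translates $H_{\alpha, \langle\lambda, \alpha\rangle}$ for $\alpha \in \Delta$ together with $H_{\tilde{\alpha}, 1 + \langle\lambda, \tilde{\alpha}\rangle}$. A direct comparison with $\mathcal{A}_\mathrm{id}$ shows that the first $n$ walls separate the two alcoves, while the last one does not (both alcoves lie on the side $\langle \cdot, \tilde{\alpha}\rangle < 1 + \langle\lambda, \tilde{\alpha}\rangle$, using dominance of $\lambda$). Hence $|D_R(\theta_\mathrm{id}(\lambda))| = n = |(S_\mathrm{f})_\tau|$, and it suffices to show that the unique non-right-descent is $(s_0)_\tau$. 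For this I would write $\theta_\mathrm{id}(\lambda) = t_\lambda w_0 \omega$ in $W_\mathrm{e}$, with $\omega \in \Omega$ determined by the requirement $\theta_\mathrm{id}(\lambda) \in W_\mathrm{aff}$, and track $\theta_\mathrm{id}(\lambda)(H_{(s_0)_\tau})$ through this factorization, using the identity $w_0(H_{s_0}) = H_{\tilde{\alpha}, 1}$ followed by translation by $\lambda$ to recover the non-separating wall.

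For the ``Furthermore'' clause, the forward inclusion follows from the first part together with injectivity of $\lambda \mapsto \mathcal{A}_{\theta_\mathrm{id}(\lambda)}$; disjointness of the ${}^\mathrm{f}\mathcal{D}^\sigma$ is forced by the observation that $D_R(w) = S$ is impossible for a bounded simplex alcove. For the reverse inclusion, if $w \in {}^\mathrm{f}\mathcal{D}^\sigma$ then the left descent condition places $\mathcal{A}_w$ in $C^+$, so by the tiling \eqref{eq: param c plus} it lies in a unique translate $\Pi^+ + \lambda$. The right descent condition forces all but one wall of $\mathcal{A}_w$ to separate it from $\mathcal{A}_\mathrm{id}$, and a direct count of non-separating walls over the alcoves inside $\Pi^+ + \lambda$ singles out $\mathcal{A}_{w_0} + \lambda = \mathcal{A}_{\theta_\mathrm{id}(\lambda)}$ as the unique alcove with this property. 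The main obstacle is the last part of the right-descent argument: correctly identifying the label of the non-separating wall as $(s_0)_\tau$ requires careful bookkeeping with the $\Omega$-factor in the $W_\mathrm{e}$-decomposition, since the relation between $\sigma_\lambda$ and the image of $t_\lambda$ in $W_\mathrm{e}/W_\mathrm{aff}$ involves inverting through the defining equation \eqref{eq: sigmaofcoweight}.
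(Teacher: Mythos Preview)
The paper does not prove this lemma at all: it simply cites \cite[Lemma~2.4]{castillo2023size} and moves on. So there is no ``paper's own proof'' to compare against, and your geometric argument is a genuine contribution relative to what is written here.

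Your argument for the first claim is correct and clean. The left-descent part is immediate, and the right-descent part works exactly as you outline: the walls of $\mathcal{A}_{w_0}+\lambda$ are $H_{\alpha,\langle\lambda,\alpha\rangle}$ for $\alpha\in\Delta$ together with $H_{\tilde\alpha,1+\langle\lambda,\tilde\alpha\rangle}$, and only the last one fails to separate. Your worry about the bookkeeping with $\Omega$ is unfounded: using $\theta_{\mathrm{id}}(\lambda)=t_\lambda w_0\sigma_\lambda^{-1}$ (which is \Cref{lem: amigos algebraic def}, proved independently of this lemma), one computes directly
\[
\theta_{\mathrm{id}}(\lambda)\bigl(H_{(s_0)_\tau}\bigr)=t_\lambda w_0\,\tau^{-1}\bigl(\tau(H_{s_0})\bigr)=t_\lambda w_0(H_{\tilde\alpha,-1})=t_\lambda(H_{\tilde\alpha,1})=H_{\tilde\alpha,1+\langle\lambda,\tilde\alpha\rangle},
\]
so the non-descent is precisely $(s_0)_\tau$, as required.

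The one soft spot is the reverse inclusion in the ``Furthermore'' clause. Your plan is to place $\mathcal{A}_w$ in some $\Pi^++\lambda$ and then argue by a ``direct count of non-separating walls'' that only the alcove $\mathcal{A}_{w_0}+\lambda$ has exactly one such wall. This is true, but it is not quite a one-line observation: for a generic alcove in $\Pi^++\lambda$ one has to check that at least two of its walls are ``outer'' walls of the parallelepiped (or more generally fail to separate), and this requires a small case analysis or an induction on the distance from $\mathcal{A}_{w_0}+\lambda$ inside $\Pi^++\lambda$. It would strengthen the write-up to either carry this out explicitly, or to replace it by the cleaner observation that $w\in{}^{\mathrm f}\mathcal{D}^\sigma$ forces $w(w_0)_\sigma^{-1}\in{}^{\mathrm f}\mathcal{D}_\sigma$, whose alcove must then be the unique alcove of $C^+$ adjacent to the vertex $\sigma(\mathbf{0})+\mu$ for some $\mu$ in the coroot lattice, from which $w=\theta_{\mathrm{id}}(\lambda)$ follows. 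Your disjointness argument (no $w$ can have $D_R(w)=S$) is fine once one notes that $\sigma\mapsto(s_0)_\sigma$ is injective on $\Omega$, which follows from the fact that $\Omega$ acts simply transitively on the special vertices of the affine Dynkin diagram.
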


The following lemma gives an equivalent algebraic definition of the elements $\theta_a(\lambda)$.

\begin{lemma}\label{lem: amigos algebraic def}
For $\lambda\in(\Lambda^\vee)^+$ and $a\in\mathcal{F}$, let $\sigma_\lambda$ be as in Equation \eqref{eq: sigmaofcoweight}.
Then
\begin{equation}
\theta_a(\lambda)=t_\lambda\, w_0\, a \,\sigma_\lambda^{-1},
\end{equation}
where $t_\lambda\in W_\mathrm{e}$ is the translation by $\lambda$.
\end{lemma}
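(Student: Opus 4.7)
My plan is to verify the formula by checking that the right-hand side $t_\lambda w_0 a \sigma_\lambda^{-1}$ satisfies the two defining conditions of $\theta_a(\lambda)$ in \Cref{def: amigos}: it lies in $W_\mathrm{aff}$ (not merely in $W_\mathrm{e}$), and its action on the fundamental alcove produces $\mathcal{A}_{w_0 a}+\lambda$. The uniqueness clause in the definition then forces equality.

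The alcove computation is a straightforward right-to-left application on $\mathcal{A}_\mathrm{id}$: since $\sigma_\lambda^{-1}\in\Omega$ stabilizes the fundamental alcove, applying $a\in W_\mathrm{aff}$ next gives $\mathcal{A}_a$, then $w_0\in W_\mathrm{f}$ sends $\mathcal{A}_a$ to $\mathcal{A}_{w_0 a}$ (using that $w_0\cdot a$ is a reduced product for $a\in\mathcal{F}$, as observed just before \Cref{def: amigos}), and finally $t_\lambda$ translates by $\lambda$. No subtlety arises in this step.

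The membership in $W_\mathrm{aff}$ is the substantive part. Here I would use the quotient homomorphism $\pi\colon W_\mathrm{e}\to W_\mathrm{e}/W_\mathrm{aff}$, which under the presentation $W_\mathrm{e}=\Lambda^\vee\rtimes W_\mathrm{f}$ is simply $t_\mu v\mapsto \mu+\bbZ\Phi^\vee$. Since $w_0$ and $a$ lie in $W_\mathrm{aff}$, they vanish under $\pi$, so the membership reduces to checking that $\pi(t_\lambda)+\pi(\sigma_\lambda^{-1})\equiv 0$ in $\Lambda^\vee/\bbZ\Phi^\vee$. To compute $\pi(\sigma_\lambda^{-1})$, I would write $\sigma_\lambda=t_{\sigma_\lambda(\mathbf{0})}\cdot v$ with $v\in W_\mathrm{f}$, invert in the semidirect form, and invoke the fact that $W_\mathrm{f}$ acts trivially on $\Lambda^\vee/\bbZ\Phi^\vee$ (immediate from $s_\alpha(\mu)-\mu=-\langle\mu,\alpha\rangle\alpha^\vee\in\bbZ\Phi^\vee$). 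The defining congruence $\sigma_\lambda(\mathbf{0})\equiv -\lambda\pmod{\bbZ\Phi^\vee}$ from \eqref{eq: sigmaofcoweight} then produces the desired cancellation. The only delicate point is keeping the identifications aligned---the natural isomorphism $W_\mathrm{e}/W_\mathrm{aff}\cong\Lambda^\vee/\bbZ\Phi^\vee$ coming from the semidirect product on one hand, and Bourbaki's bijection $\sigma\mapsto -\sigma(\mathbf{0})$ from $\Omega$ to $\Lambda^\vee/\bbZ\Phi^\vee$ on the other---but once these are pinned down, the cancellation is essentially a one-line consequence of the definitions.
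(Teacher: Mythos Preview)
Your route differs from the paper's. The paper does not compute the quotient map at all: it invokes \cite{PreCanonical} for the special case $a=\mathrm{id}$, namely that $t_\lambda w_0\sigma_\lambda^{-1}\in W_\mathrm{aff}$, and then factors
\[
t_\lambda\, w_0\, a\,\sigma_\lambda^{-1}=(t_\lambda w_0\sigma_\lambda^{-1})\,(\sigma_\lambda a\sigma_\lambda^{-1}),
\]
the second factor lying in $W_\mathrm{aff}$ because conjugation by $\Omega$ preserves $W_\mathrm{aff}$. The alcove computation is then identical to yours. Your argument via the quotient $\pi\colon W_\mathrm{e}\to\Lambda^\vee/\bbZ\Phi^\vee$ is more self-contained and does not need an external citation.

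However, the cancellation you announce does not go through as written. Carrying out your own recipe: from $\sigma_\lambda=t_{\sigma_\lambda(\mathbf{0})}v$ one gets $\pi(\sigma_\lambda^{-1})\equiv -\sigma_\lambda(\mathbf{0})$ (using that $W_\mathrm{f}$ acts trivially on $\Lambda^\vee/\bbZ\Phi^\vee$), and the defining congruence \eqref{eq: sigmaofcoweight} reads $\sigma_\lambda(\mathbf{0})\equiv-\lambda$, hence $\pi(\sigma_\lambda^{-1})\equiv\lambda$, \emph{not} $-\lambda$. Therefore
\[
\pi\bigl(t_\lambda w_0 a\,\sigma_\lambda^{-1}\bigr)\equiv\lambda+\lambda=2\lambda,
\]
which is nonzero whenever $\sigma_\lambda$ is not an involution. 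A direct check in $\widetilde{A}_2$ with $\lambda=\varpi_1$ confirms this: one finds $t_{\varpi_1}w_0\sigma_{\varpi_1}^{-1}=t_{2\varpi_1}s_2\notin W_\mathrm{aff}$, whereas $t_{\varpi_1}w_0\sigma_{\varpi_1}=t_{\alpha_1+\alpha_2}s_1=\theta_{\mathrm{id}}(\varpi_1)$. So your method is correct, and in fact it detects that the exponent on $\sigma_\lambda$ in the displayed formula is off by a sign; the paper's proof does not see this because it defers the membership question to the cited reference rather than computing $\pi$ directly. You should flag this rather than claim the cancellation succeeds.
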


\begin{proof}

For notational simplicity, we put $\sigma=\sigma_\lambda$.
In \cite[\S 2.1]{PreCanonical} it is shown that $t_\lambda w_0\sigma^{-1}\in W_\mathrm{aff}$ (in that paper this element is called $\theta(\lambda)$).
This implies that $(t_\lambda w_0\sigma^{-1})(\sigma a\sigma^{-1})\in W_\mathrm{aff}$. In other words, $x:=t_\lambda w_0 a\sigma^{-1} \in W_\mathrm{aff}$ and
\begin{equation}
\mathcal{A}_{x}=x(\mathcal{A}_\mathrm{id})=\mathcal{A}_{w_0a}+\lambda,
\end{equation}
since $\sigma^{-1}$ fixes the fundamental alcove.
Therefore, $x=\theta_a(\lambda)$.
\end{proof}

We stress that, for $a\in\mathcal{F}$ and $\lambda,\mu\in\Lambda^\vee$ with $\lambda,\lambda-\mu\in(\Lambda^\vee)^+$,
\begin{equation}\label{vminus}
V(\theta_a(\lambda-\mu))=V(\theta_a(\lambda))-\mu.
\end{equation}

Let $\lambda\in(\Lambda^\vee)^+$.
Note that by Equation \eqref{eq: parallelw0} and \Cref{def: amigos}, we get
\begin{equation}\label{eq: parallel trasladadao}
\mathcal{A}^{-1}(\Pi^++\lambda)=\{\theta_a(\lambda)\mid a\in\mathcal{F}\}.
\end{equation}
Hence, Equation \eqref{eq: LTSC alcovic param} gives an algebraic parametrization of $\mathfrak{C}$:
\begin{equation}
\mathfrak{C} = \bigsqcup_{\substack{a\in\mathcal{F}\\ \sigma \in \Omega \\ \lambda\in(\Lambda^\vee)^+}}\,\mathcal{A}^{-1}(\sigma a^{-1}(\Pi^++\lambda))
= \bigsqcup_{\substack{a\in\mathcal{F}\\ \sigma \in \Omega \\ \lambda\in(\Lambda^\vee)^+}}\,\{(a^{-1}\theta_b(\lambda))_\sigma\mid b\in\mathcal{F}\}.
\end{equation}
That is, there is a bijection
\begin{equation}\label{def: param LTSC}
\Theta:\mathcal{F}\times(\Lambda^\vee)^+\times\mathcal{F}\times\Omega\xlongrightarrow{\sim}\mathfrak{C}
\end{equation}
given by $\Theta(a,\lambda,b,\sigma)=(a^{-1}\theta_b(\lambda))_\sigma=\sigma a^{-1}\theta_b(\lambda)\sigma^{-1}$.

We can say more about this bijection, but first we need to establish the following lemma.
For $X,Y\subset W_\mathrm{aff}$, we write $X\cdot Y$ to denote the set $XY$ and to state that $x\cdot y$ (i.e. $\ell(xy)=\ell(x)+\ell(y)$) for every $x\in X$ and $y\in Y$.
\begin{lemma}\label{lem: coset reps dot multiplication}
For all $\sigma\in\Omega$, we have $\mathcal{D}_\sigma\cdot{}^\sigma\mathcal{D}$ and $\mathcal{D}^\sigma\cdot{}_\sigma\mathcal{D}$.
\end{lemma}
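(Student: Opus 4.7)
The plan is to reduce both claims to a single geometric verification at $\sigma=\mathrm{id}$, and then to apply a separating-hyperplane criterion. First, because any $\sigma\in\Omega$ has length zero and conjugation by $\sigma$ is a length-preserving automorphism of $(W_\mathrm{aff},S)$ carrying $S_\mathrm{f}$ to $(S_\mathrm{f})_\sigma$, one obtains the bijections $\mathcal{D}_\sigma=\sigma\mathcal{D}_\mathrm{f}\sigma^{-1}$, $\mathcal{D}^\sigma=\sigma\mathcal{D}^\mathrm{f}\sigma^{-1}$, ${}_\sigma\mathcal{D}=\sigma\,{}_\mathrm{f}\mathcal{D}\,\sigma^{-1}$, and ${}^\sigma\mathcal{D}=\sigma\,{}^\mathrm{f}\mathcal{D}\,\sigma^{-1}$ already recorded in the paper, all of which preserve length. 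Hence both statements reduce to $\sigma=\mathrm{id}$. Moreover, since $(\mathcal{D}_\mathrm{f})^{-1}={}_\mathrm{f}\mathcal{D}$, $({}^\mathrm{f}\mathcal{D})^{-1}=\mathcal{D}^\mathrm{f}$, and $\ell$ is invariant under inversion, the two resulting statements $\mathcal{D}_\mathrm{f}\cdot{}^\mathrm{f}\mathcal{D}$ and $\mathcal{D}^\mathrm{f}\cdot{}_\mathrm{f}\mathcal{D}$ are equivalent, so it suffices to prove the latter.

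Let $u\in\mathcal{D}^\mathrm{f}$ and $z\in{}_\mathrm{f}\mathcal{D}=W_\mathrm{aff}^-$. I will use the standard Coxeter-theoretic fact that $\ell(uz)=\ell(u)+\ell(z)$ if and only if there is no reflection $t\in W_\mathrm{aff}$ with both $ut<u$ and $tz<z$. Writing $t$ as the reflection along a hyperplane $H=H_{\alpha,k}$, the first condition translates (via the alcove correspondence) to $u(H)$ separating $\mathcal{A}_\mathrm{id}$ from $\mathcal{A}_u$, and the second to $H$ separating $\mathcal{A}_\mathrm{id}$ from $\mathcal{A}_z$; the goal is to rule out any such $H$. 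Since both $\mathcal{A}_\mathrm{id}$ and $\mathcal{A}_z$ lie in $C^-$, any separator of the pair $(\mathcal{A}_\mathrm{id},\mathcal{A}_z)$ must be of the form $H_{\alpha,k}$ with $\alpha\in\Phi^+$ and $k\leq -1$ (using $\langle\cdot,\alpha\rangle<0$ on $C^-$ together with $-1<\langle\cdot,\alpha\rangle<0$ on $\mathcal{A}_\mathrm{id}$). Assume for contradiction that $u(H_{\alpha,k})$ also separates $\mathcal{A}_\mathrm{id}$ from $\mathcal{A}_u$; applying the isometry $u^{-1}$ yields that $H_{\alpha,k}$ separates $\mathcal{A}_{u^{-1}}$ from $\mathcal{A}_\mathrm{id}$. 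But $u\in\mathcal{D}^\mathrm{f}$ implies $u^{-1}\in{}^\mathrm{f}\mathcal{D}=W_\mathrm{aff}^+$, so $\mathcal{A}_{u^{-1}}\subset C^+$, whence $\langle\cdot,\alpha\rangle>0>k$ on $\mathcal{A}_{u^{-1}}$ while $\langle\cdot,\alpha\rangle>-1\geq k$ on $\mathcal{A}_\mathrm{id}$. Both alcoves thus lie on the $\{\langle\cdot,\alpha\rangle>k\}$ side of $H_{\alpha,k}$, contradicting separation.

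There is no deep obstacle here; once one commits to the hyperplane picture the argument is essentially a direct check. The point that deserves the most care is the paper's sign convention $\mathcal{A}_\mathrm{id}\subset C^-$ (rather than the more familiar $\mathcal{A}_\mathrm{id}\subset C^+$), since this is what makes the key inequality $\langle\cdot,\alpha\rangle>-1\geq k$ go through. The conceptual crux is the implication $u\in\mathcal{D}^\mathrm{f}\Rightarrow \mathcal{A}_{u^{-1}}\subset C^+$, which repackages the combinatorial right-descent hypothesis on $u$ as precisely the geometric statement needed to defeat any would-be common separator.
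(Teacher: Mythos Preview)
Your proof is correct and follows essentially the same route as the paper: both reduce to $\sigma=\mathrm{id}$ and to the single statement $\mathcal{D}^\mathrm{f}\cdot{}_\mathrm{f}\mathcal{D}$, and then establish it via a separating-hyperplane argument that exploits $u^{-1}\in{}^\mathrm{f}\mathcal{D}=W_\mathrm{aff}^+$ and $z\in{}_\mathrm{f}\mathcal{D}=W_\mathrm{aff}^-$. The only cosmetic difference is that you package the length additivity as the inversion-set criterion ``no reflection $t$ with $ut<u$ and $tz<z$'' and then rule out such $t$ by explicit inequalities on $\langle\cdot,\alpha\rangle$, whereas the paper directly exhibits $\mathcal{H}(\mathrm{id},x)\sqcup x\mathcal{H}(\mathrm{id},y)\subset\mathcal{H}(\mathrm{id},xy)$.
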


\begin{proof}
Suppose $\mathcal{D}^\mathrm{f}\cdot{}_\mathrm{f}\mathcal{D}$ holds.
Then $(\mathcal{D}^\mathrm{f}\cdot{}_\mathrm{f}\mathcal{D})^{-1} = ({}_\mathrm{f}\mathcal{D})^{-1} \cdot (\mathcal{D}^\mathrm{f})^{-1} = \mathcal{D}_\mathrm{f} \cdot {}^\mathrm{f}\mathcal{D}$, hence simply conjugating by $\sigma$ yields the desired result.

So it suffices to prove $\mathcal{D}^\mathrm{f}\cdot{}_\mathrm{f}\mathcal{D}$.
Let $x\in\mathcal{D}^\mathrm{f}$ and $y\in {}_\mathrm{f}\mathcal{D}$.
We prove $\ell(x)+\ell(y) \leq \ell(xy)$, as the reverse inequality always holds.
For $u,w\in W_\mathrm{aff}$, let $\mathcal{H}(u,w)$ be the set of hyperplanes $H_{\alpha,k}$ that separate $\mathcal{A}_u$ and $\mathcal{A}_w$.
We claim the following:
\begin{enumerate}[(a)]
\item\label{double coset claim1} $x\mathcal{H}(\mathrm{id},y)=\{xH\mid H\in \mathcal{H}(\mathrm{id},y)\} \subset \mathcal{H}(\mathrm{id},xy)$,
\item\label{double coset claim2} $\mathcal{H}(\mathrm{id},x) \subset \mathcal{H}(\mathrm{id},xy)$.
\end{enumerate}

Assume the claims and let $H\in\mathcal{H}(\mathrm{id},x)\cap\mathcal{H}(x,xy)$.
Then $\mathcal{A}_\mathrm{id}$ and $\mathcal{A}_{xy}$ both lie on the same side of $H$, but this contradicts \Cref{double coset claim2}. 
Hence the intersection $\mathcal{H}(\mathrm{id},x)\cap\mathcal{H}(x,xy)$ is empty.
On the other hand,  it is clear that $x\mathcal{H}(\mathrm{id},y) \subset \mathcal{H}(x,xy)$.
Therefore, using \Cref{double coset claim1}, we have
\begin{equation}
\left(\mathcal{H}(\mathrm{id},x)\sqcup x\mathcal{H}(\mathrm{id},y)\right) \subset \mathcal{H}(\mathrm{id},xy),
\end{equation}
which implies $\ell(x)+\ell(y) \leq \ell(xy)$.

So it remains to prove the claims.
The proof is as follows.
\begin{enumerate}[(a)]
\item Let $H\in \mathcal{H}(\mathrm{id},y)$.
Note that $x^{-1}\in {}^\mathrm{f}\mathcal{D}=W_\mathrm{aff}^+$ and $y\in {}_\mathrm{f}\mathcal{D}=W_\mathrm{aff}^-$, so we must have that $H\in\mathcal{H}(x^{-1},y)$.
Therefore $xH\in\mathcal{H}(\mathrm{id},xy)$, as we wanted to show.
\item Let $H\in \mathcal{H}(\mathrm{id},x)$.
Then $x^{-1}H\in \mathcal{H}(x^{-1},\mathrm{id})$.
Once again, since $x^{-1}\in W_\mathrm{aff}^+$ and $y\in W_\mathrm{aff}^-$, we must have $x^{-1}H\in \mathcal{H}(x^{-1},y)$ and hence $H\in\mathcal{H}(\mathrm{id},xy)$.
\end{enumerate}

This concludes the proof of the lemma.
\end{proof}

Now we are in position to describe the length of any element in $\mathfrak{C}$.
\begin{cor}\label{cor: length of parametrization}
Let $\lambda\in(\Lambda^\vee)^+$ and $a,b\in\mathcal{F}$.
Let $\tau=\sigma_\lambda\in\Omega$ as in \eqref{eq: sigmaofcoweight}.
Then $a^{-1}\cdot\theta_\mathrm{id}(\lambda)\cdot \tau b \tau^{-1}$.
Consequently, for any $\sigma\in\Omega$ we have
\begin{equation}\label{eq: length of parametrization}
\ell(\Theta(a,\lambda,b,\sigma))=\ell(a)+\ell(b)+\ell(w_0)+2 \langle\lambda,\rho\rangle.
\end{equation}
\end{cor}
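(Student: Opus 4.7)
The plan is to factor $\Theta(a,\lambda,b,\sigma)$ into three pieces and promote the factorization to an additive one via two applications of \Cref{lem: coset reps dot multiplication}, and then handle the length of $\theta_\mathrm{id}(\lambda)$ separately.

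Writing $\tau=\sigma_\lambda$, \Cref{lem: amigos algebraic def} gives the purely algebraic identity
\[
\theta_b(\lambda)=t_\lambda w_0 b \tau^{-1}=(t_\lambda w_0\tau^{-1})(\tau b\tau^{-1})=\theta_\mathrm{id}(\lambda)\,(\tau b\tau^{-1}),
\]
so $\Theta(a,\lambda,b,\sigma)=\sigma\,a^{-1}\theta_\mathrm{id}(\lambda)(\tau b\tau^{-1})\,\sigma^{-1}$. To upgrade this to the additive product $a^{-1}\cdot\theta_\mathrm{id}(\lambda)\cdot \tau b\tau^{-1}$, I apply \Cref{lem: coset reps dot multiplication} twice. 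First, with $\sigma=\tau$: by \Cref{lemma: thetas are maximal} we have $\theta_\mathrm{id}(\lambda)\in {}^\mathrm{f}\mathcal{D}^\tau\subseteq\mathcal{D}^\tau$; and since $b\in\mathcal{F}\subseteq W_\mathrm{aff}^-={}_\mathrm{f}\mathcal{D}$, the identity $({}_\mathrm{f}\mathcal{D})_\tau={}_\tau\mathcal{D}$ (noted after \Cref{def: double coset reps}) yields $\tau b\tau^{-1}\in{}_\tau\mathcal{D}$, so $\theta_\mathrm{id}(\lambda)\cdot\tau b\tau^{-1}=\theta_b(\lambda)$ is length-additive. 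Second, with $\sigma=\mathrm{id}$: since $a\in\mathcal{F}\subseteq{}_\mathrm{f}\mathcal{D}$ gives $a^{-1}\in\mathcal{D}_\mathrm{f}$, and $\theta_b(\lambda)\in W_\mathrm{aff}^+={}^\mathrm{f}\mathcal{D}$, the pair $a^{-1}\cdot\theta_b(\lambda)$ is also length-additive. Chaining the two additivity statements proves the first part of the corollary.

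For the length formula, I use that $\sigma\in\Omega$ permutes the generating set $S$ (\Cref{rmk: forget sigma}), so conjugation by $\sigma$ preserves $\ell$, and similarly $\ell(\tau b\tau^{-1})=\ell(b)$. Hence $\ell(\Theta(a,\lambda,b,\sigma))=\ell(a)+\ell(\theta_\mathrm{id}(\lambda))+\ell(b)$. What remains is the identity $\ell(\theta_\mathrm{id}(\lambda))=\ell(w_0)+2\langle\lambda,\rho\rangle$, which I plan to derive from \Cref{lem: amigos algebraic def} by rewriting $t_\lambda=\theta_\mathrm{id}(\lambda)(w_0)_\tau\tau$ and observing that $\theta_\mathrm{id}(\lambda)\in{}^\mathrm{f}\mathcal{D}^\tau$ forces right multiplication by $(w_0)_\tau$ to drop length by exactly $\ell(w_0)$, combined with the classical formula $\ell(t_\lambda)=2\langle\lambda,\rho\rangle$ for dominant $\lambda$.

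The only real technical point is the bookkeeping between the several parabolic coset sets $\mathcal{D}_\mathrm{f}$, ${}^\mathrm{f}\mathcal{D}$, $\mathcal{D}^\tau$, ${}_\tau\mathcal{D}$ and their behaviour under conjugation by $\tau$; once these identifications are aligned, both the additivity and the length computation follow directly from the lemmas already proved earlier in this section.
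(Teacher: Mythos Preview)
Your proposal is correct and follows essentially the same route as the paper: two applications of \Cref{lem: coset reps dot multiplication} (first with $\tau$ to get $\theta_\mathrm{id}(\lambda)\cdot\tau b\tau^{-1}$, then with $\mathrm{id}$ to get $a^{-1}\cdot\theta_b(\lambda)$), followed by invariance of $\ell$ under $\Omega$-conjugation. The only difference is cosmetic: for $\ell(\theta_\mathrm{id}(\lambda))=\ell(w_0)+2\langle\lambda,\rho\rangle$ the paper simply cites \cite[\S2.1]{PreCanonical}, whereas you sketch a derivation via $t_\lambda=\theta_\mathrm{id}(\lambda)(w_0)_\tau\tau$ and the extended length on $W_\mathrm{e}$; your argument is fine but tacitly uses that $\ell$ extends to $W_\mathrm{e}$ with $\ell(\Omega)=0$, which the paper has not introduced.
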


\begin{proof}
On the one hand, since $b\in\mathcal{F}\subset W_\mathrm{aff}^-={}_\mathrm{f}\mathcal{D}$, then $\tau b \tau^{-1} \in ({}_\mathrm{f}\mathcal{D})_\tau={}_\tau\mathcal{D}$.
Also, we know that $\theta_\mathrm{id}(\lambda)\in{}^\mathrm{f}\mathcal{D}^\tau\subset\mathcal{D}^\tau$, by \Cref{lemma: thetas are maximal}.
Hence, \Cref{lem: coset reps dot multiplication} implies that $\theta_\mathrm{id}(\lambda)\cdot \tau b \tau^{-1}$.

On the other hand, $a\in {}_\mathrm{f}\mathcal{D}$ implies that $a^{-1}\in \mathcal{D}_\mathrm{f}$ and we know that $\theta_b(\lambda)\in W_\mathrm{aff}^+ = {}^\mathrm{f}\mathcal{D}$.
Applying \Cref{lem: coset reps dot multiplication} for ($\sigma=\mathrm{id}$) and using \Cref{lem: amigos algebraic def}, we deduce that
\begin{equation}\label{eq: a theta b}
a^{-1}\cdot\theta_b(\lambda) = a^{-1}\cdot\theta_\mathrm{id}(\lambda)\cdot \tau b \tau^{-1}.
\end{equation}
Finally, \eqref{eq: length of parametrization} follows from \Cref{rmk: forget sigma} and the equality $\ell(\theta_\mathrm{id}(\lambda)) = \ell(w_0)+2\langle\lambda,\rho\rangle$ (see \cite[\S 2.1]{PreCanonical}).
\end{proof}

A remarkable property of the parametrization \eqref{def: param LTSC} is that it intertwines the dominance order with the Bruhat order, as the following lemma shows.

\begin{lemma}\label{lem: intertwining orders}
Fix $a,b\in\mathcal{F}$ and $\sigma\in\Omega$.
Let $\lambda,\mu\in(\Lambda^\vee)^+$ such that $\lambda\equiv\mu\mod\bbZ\Phi^\vee$.
Then
\begin{equation*}
\mu\leq\lambda \qquad\mbox{if and only if}\qquad \Theta(a,\mu,b,\sigma)\leq \Theta(a,\lambda,b,\sigma).
\end{equation*}
\end{lemma}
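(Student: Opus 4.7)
The plan is to reduce the statement to the case $a=b=\mathrm{id}$, where the result is already known from \cite{castillo2023size}, by peeling off the conjugation by $\sigma$ and the length-additive left/right multiplications provided by Corollary~\ref{cor: length of parametrization}.

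First, I would dispense with $\sigma$: by Remark~\ref{rmk: forget sigma}, conjugation by $\sigma\in\Omega$ acts on $(W_\mathrm{aff},S)$ by a Coxeter automorphism, hence preserves the length function and the Bruhat order. Therefore it suffices to prove the equivalence for $\sigma=\mathrm{id}$, i.e., to compare $a^{-1}\theta_b(\mu)$ and $a^{-1}\theta_b(\lambda)$. Since $\lambda\equiv\mu\pmod{\bbZ\Phi^\vee}$, Equation~\eqref{eq: sigmaofcoweight} gives $\sigma_\lambda=\sigma_\mu=:\tau$. Corollary~\ref{cor: length of parametrization} then yields the length-additive factorisations
\begin{equation*}
a^{-1}\theta_b(\lambda)=a^{-1}\cdot\theta_\mathrm{id}(\lambda)\cdot(\tau b\tau^{-1}),\qquad a^{-1}\theta_b(\mu)=a^{-1}\cdot\theta_\mathrm{id}(\mu)\cdot(\tau b\tau^{-1}).
\end{equation*}

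Next, I would reduce to the case $a=b=\mathrm{id}$ using the following general principle for Coxeter groups: if $u\cdot x\cdot v$ and $u\cdot y\cdot v$ are both length-additive, then
\begin{equation*}
x\leq y\qquad\Longleftrightarrow\qquad uxv\leq uyv.
\end{equation*}
The forward implication is immediate from the subword property: concatenating a reduced expression of $u$, a reduced expression of $y$ (which contains a reduced expression of $x$ as a subword), and a reduced expression of $v$ gives a reduced expression of $uyv$ containing one for $uxv$. The converse is the delicate direction, but it follows by a standard inductive argument using the lifting property (e.g.\ \cite[Prop.~2.2.7]{BB}): iterate over the letters of reduced expressions for $u$ (from the right) and for $v$ (from the left), at each step using the fact that if $s$ is an ascent of both elements, comparison is equivalent to comparison after multiplying by $s$. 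Applied to $u=a^{-1}$, $v=\tau b\tau^{-1}$, $x=\theta_\mathrm{id}(\mu)$, $y=\theta_\mathrm{id}(\lambda)$, this reduces the lemma to showing $\mu\leq\lambda\Leftrightarrow\theta_\mathrm{id}(\mu)\leq\theta_\mathrm{id}(\lambda)$.

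Finally, the case $a=b=\mathrm{id}$ is precisely the content of \cite[Eq.~(3.3) and Lemma~2.4]{castillo2023size}, which establishes that the map $\lambda\mapsto\theta_\mathrm{id}(\lambda)$ restricted to any fixed coset of $\bbZ\Phi^\vee$ in $(\Lambda^\vee)^+$ is an order-isomorphism from the dominance order onto its image in the Bruhat order. The hypothesis $\lambda\equiv\mu\pmod{\bbZ\Phi^\vee}$ is exactly what guarantees that we stay within a single such coset, so the equivalence transfers. The main obstacle is the cancellation step in the second paragraph: while the subword property handles one direction trivially, reflecting the Bruhat order through length-additive multiplication on both sides needs care; the cleanest route is the iterated lifting argument indicated above.
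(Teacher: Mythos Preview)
Your proposal is correct and follows essentially the same route as the paper's proof: reduce $\sigma$ via \Cref{rmk: forget sigma}, use the length-additive factorisation of \Cref{cor: length of parametrization}, cancel $a^{-1}$ and $\tau b\tau^{-1}$ via the lifting property, and invoke the known base case $\mu\leq\lambda\iff\theta_{\mathrm{id}}(\mu)\leq\theta_{\mathrm{id}}(\lambda)$. The only cosmetic difference is that the paper cites \cite[\S 2.1]{PreCanonical} rather than \cite{castillo2023size} for that base case, and states the cancellation principle \eqref{eq: lem intertwining step 1} for one-sided multiplication before applying it on both sides.
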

\begin{proof}

It suffices to consider $\sigma=\mathrm{id}$ because if $x,y\in W_{\mathrm{aff}}$ and $\sigma \in \Omega$ one can prove that $x\leq y\iff x_{\sigma}\leq y_{\sigma}$ (see \Cref{rmk: forget sigma}).
Let $\tau=\sigma_\lambda=\sigma_\mu\in\Omega$ as in \eqref{eq: sigmaofcoweight}.
In the following, we use the fact that for all $x,y\in W_\mathrm{aff},$ if $z\in W_\mathrm{aff}$ satisfies that $z\cdot x$ and $ z\cdot y$ then 
\begin{equation}\label{eq: lem intertwining step 1}
x\leq y \iff  z\cdot x\leq z\cdot y 
\end{equation}
There is a similar equivalence if one multiplies by $z$ on the right. Both equivalences are proved by induction on $\ell(z)$ using the Lifting Property for Coxeter Systems. 
We have 
\begin{align*}
\mu\leq\lambda &\iff \theta_{\mathrm{id}}(\mu)\leq\theta_{\mathrm{id}}(\lambda), &\mbox{(\cite[\S 2.1]{PreCanonical})} \\
&\iff a^{-1}\cdot\theta_{\mathrm{id}}(\mu)\cdot\tau b \tau^{-1} \leq a^{-1}\cdot\theta_{\mathrm{id}}(\lambda)\cdot\tau b \tau^{-1},
&\mbox{(\Cref{cor: length of parametrization} \mbox{ and } \eqref{eq: lem intertwining step 1})} \\
&\iff a^{-1}\cdot\theta_b(\mu) \leq a^{-1}\cdot\theta_b(\lambda),
&\mbox{(\Cref{lem: amigos algebraic def})} \\
&\iff \Theta(a,\mu,b,\mathrm{id})\leq \Theta(a,\lambda,b,\mathrm{id}),
\end{align*}
finishing the proof.
\end{proof}

For $(a,\lambda,b,\sigma)\in \mathcal{F}\times(\Lambda^\vee)^+\times\mathcal{F}\times\Omega$, we denote by $\mathcal{I}(a,\lambda,b,\sigma)$ the lower Bruhat interval of $\Theta(a,\lambda,b,\sigma)\in\mathfrak{C}$.
In formulas,
\begin{equation}\label{def: lower int gral}
\mathcal{I}(a,\lambda,b,\sigma) = \{ w \in W_\mathrm{aff} \mid w \leq \Theta(a,\lambda,b,\sigma) \}.
\end{equation}

\subsection{The Paper Boat} \label{subsection PB}
Consider the subposet $((\Lambda^\vee)^+, \leq)$ of the poset of coweights with the dominance order.
For $\lambda , \mu \in (\Lambda^\vee)^+ $ we write $\mu \lessdot \lambda$ if $\lambda $ covers $\mu $ in  $((\Lambda^\vee)^+, \leq)$.
In \cite{stembridge1998partial}, the author gives a complete description of this covering relation.
In particular, he shows that $\lambda-\mu\in(\Phi^\vee)^+$ whenever $\mu \lessdot \lambda$.
For $\lambda\in(\Lambda^\vee)^+$, we define
\begin{equation}
(\Phi^\vee)^+(\lambda) = \{ \lambda-\mu\mid \mu\in (\Lambda^\vee)^+  \mbox{ and } \mu \lessdot \lambda\}  = \{ \alpha \in  (\Phi^\vee)^+ \mid \lambda - \alpha \in (\Lambda^\vee)^+  \mbox{ and } \lambda - \alpha \lessdot \lambda \}.\label{phi+lambda}
\end{equation}

\begin{remark}
We emphasize that the symbol $\lessdot$ is reserved for coverings in the subposet of dominant coweights.
For example consider type $A_2$, which is self-dual so there is no distinction between roots and coroots and between weights and coweights.
We have $\mathbf{0} \lessdot \varpi_1 + \varpi_2$.
This relationship does not hold when considering the full set of weights.
Indeed, note that $\mathbf{0} < -\varpi_1 + 2\varpi_2 < \varpi_1 + \varpi_2$, which means that $\varpi_1 + \varpi_2$ does not cover $\mathbf{0}$ in the weight poset $\Lambda$. 
\end{remark}

\begin{definition}
We define the Paper Boat associated to $(a,\lambda,b,\sigma)\in \mathcal{F}\times(\Lambda^\vee)^+\times\mathcal{F}\times\Omega$, denoted by $PB(a,\lambda,b,\sigma)$, as
\begin{align}
PB(a,\lambda,b,\sigma) &= \mathcal{I}(a,\lambda,b,\sigma) \setminus \bigcup_{\alpha \in (\Phi^\vee)^+(\lambda)}   \mathcal{I}(a,\lambda -\alpha,b,\sigma) \\
&= \mathcal{I}(a,\lambda,b,\sigma) \setminus \bigcup_{\mu\lessdot\lambda}   \mathcal{I}(a,\mu,b,\sigma).
\end{align}
\end{definition}

For $\lambda\in(\Lambda^\vee)^+$, we denote by $\mathfrak{I}_\lambda$ the ideal generated by $\lambda$ in the subposet of dominant coweights.
This ideal can be equivalently described as
\begin{equation}\label{eq: ideal as latticepoints}
\mathfrak{I}_\lambda= \mathsf{P}(\lambda)\cap C^+\cap(\lambda+\bbZ\Phi^\vee),
\end{equation}
where 
\begin{equation}
\mathsf{P}(\lambda) = \mathrm{Conv}(W_\mathrm{f}\cdot\lambda) \label{eq:P_definition},
\end{equation}
see \cite[Proof of Theorem 3.2]{castillo2023size}.

We have two central conjectures.
The first one asserts that each lower Bruhat interval generated by an element in the lowest two-sided cell, is a disjoint union of Paper Boats.

\begin{conjecture}\label{conj: lowerint as pbs}
Let $(a,\lambda,b,\sigma)\in \mathcal{F}\times(\Lambda^\vee)^+\times\mathcal{F}\times\Omega$.
Then
\begin{equation}\label{eq: lowerint as pbs}
\mathcal{I}(a,\lambda,b,\sigma) = \bigsqcup_{\mu\in\mathfrak{I}_\lambda} PB(a,\mu,b,\sigma).
\end{equation}
\end{conjecture}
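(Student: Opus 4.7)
The plan is to recast \eqref{eq: lowerint as pbs} as a unique-minimum statement and then attack it via an alcovic Bruhat criterion. For $x \in \mathcal{I}(a,\lambda,b,\sigma)$, define
\[S_x := \{\mu \in \mathfrak{I}_\lambda \mid x \leq \Theta(a,\mu,b,\sigma)\}.\]
By \Cref{lem: intertwining orders}, whose hypothesis is automatic because $\mathfrak{I}_\lambda \subseteq \lambda + \mathbb{Z}\Phi^\vee$, the set $S_x$ is upward-closed in $\mathfrak{I}_\lambda$, and it is nonempty since $\lambda \in S_x$. Unpacking definitions, $x \in PB(a,\mu,b,\sigma)$ iff $\mu$ is minimal in $S_x$. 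Since $S_x$ is finite and nonempty, it always admits at least one minimum, so the covering $\mathcal{I}(a,\lambda,b,\sigma) \subseteq \bigcup_\mu PB(a,\mu,b,\sigma)$ is automatic, as is the reverse inclusion by \Cref{lem: intertwining orders}. Hence \eqref{eq: lowerint as pbs} is equivalent to the assertion that $S_x$ has a \emph{unique} minimum for every $x$.

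To attack the unique-minimum claim, I would use --- or first prove --- an extension of \Cref{prop:order} to all of $\mathfrak{C}$: a Bruhat comparison $y \leq z$ is encoded by a finite system of dominance inequalities $y(v) \leq z(v)$ for $v$ in a fixed vertex set $V$. Granting such a criterion, $S_x$ is cut out by the finite system
\[x(v) \leq \Theta(a,\mu,b,\sigma)(v), \qquad v \in V.\]
By \Cref{lem: amigos algebraic def} we have $\Theta(a,\mu,b,\sigma) = \sigma a^{-1} t_\mu w_0 b\,\sigma_\mu^{-1}\sigma^{-1}$, and since $\sigma_\mu$ depends only on $\mu \bmod \mathbb{Z}\Phi^\vee$ it is constant on $\mathfrak{I}_\lambda$. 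Thus for each fixed $v$ the point $\Theta(a,\mu,b,\sigma)(v)$ is an affine function of $\mu$ through the translation $t_\mu$. The solution set of a system of componentwise dominance inequalities is closed under componentwise minima in the simple coroot basis: given two putative minima $\mu_1, \mu_2$ of $S_x$, writing $\mu_i = \lambda + \sum_\alpha n^{(i)}_\alpha \alpha^\vee$, the coweight $\mu' := \lambda + \sum_\alpha \min\bigl(n^{(1)}_\alpha, n^{(2)}_\alpha\bigr)\alpha^\vee$ still solves the system, lies in $\lambda + \mathbb{Z}\Phi^\vee$, and satisfies $\mu' \leq \mu_i$ in dominance. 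Provided $\mu'$ is also dominant, $\mu' \in \mathfrak{I}_\lambda \cap S_x$ forces $\mu_1 = \mu' = \mu_2$.

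The hard part will be precisely the two provisos above. First, \Cref{prop:order} is proved in this paper only for elements of $\mathfrak{B}$ in type $A$, so a genuine extension to all of $\mathfrak{C}$ in all types --- along the lines of Schremmer's sketch in \cite{schremmer2024affine} --- must be carried out carefully and constitutes a prerequisite. Second, even granting the criterion, the dominance of the componentwise minimum $\mu'$ of two dominant elements of $\mathfrak{I}_\lambda$ is \emph{not automatic}: coordinate-wise minimization in the simple coroot basis does not in general preserve the dominant cone, so one has to exploit the additional vertex inequalities together with the $W_\mathrm{f}$-symmetry of $\mathfrak{I}_\lambda$ encoded by the permutohedron $\mathsf{P}(\lambda)$ in \eqref{eq:P_definition}. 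This interplay between the Bruhat order on $W_\mathrm{aff}$, the dominance order on $\Lambda^\vee$, and the convex geometry of $\mathsf{P}(\lambda)$ is, I expect, the true heart of the difficulty and the reason this statement is formulated as \Cref{conj: lowerint as pbs} rather than as a theorem.
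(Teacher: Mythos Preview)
The statement is \Cref{conj: lowerint as pbs}, which the paper leaves open; only the special case $a=\sigma=\mathrm{id}$ in type $A$ is proved (\Cref{thm: loweint as pbs}). Your strategy matches the paper's proof of that case: your ``$S_x$ is closed under componentwise minima'' is exactly \Cref{lem: meet pesos}, and the paper explicitly remarks afterwards that a general version of that lemma --- whose current proof rests on \Cref{prop: main tool} --- is the sole missing ingredient. So your first proviso is right on target and is indeed the whole obstruction.

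Your second proviso, however, is a non-issue. The meet (componentwise minimum in the simple coroot basis) of two dominant coweights in the same class modulo $\mathbb{Z}\Phi^\vee$ \emph{is} always dominant: this is Stembridge's result \cite{stembridge1998partial}, quoted in the paper just before \Cref{lem: meet pesos}, that $\wedge^+$ on the dominant coweights is the restriction of $\wedge$ on the full component of the coweight lattice. It is the join that fails to restrict, not the meet. So once a suitable Bruhat criterion is in hand, your $\mu'=\mu_1\wedge\mu_2$ lies in $\mathfrak{I}_\lambda$ automatically and the argument concludes exactly as in the paper's proof of the special case.
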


The non-trivial part of \Cref{conj: lowerint as pbs} is that the union is actually disjoint.
It would imply that if one manages to compute the cardinalities of Paper Boats, then one would be able to compute the sizes of all lower intervals in the lowest two-sided cell.
However, a priori, there are infinitely many Paper Boats to compute.
Our second conjecture would solve this.

\begin{conjecture}\label{conj: paperboat zones}
For all affine Weyl groups there is a partition $\mathcal{Z}$ of $(\Lambda^\vee)^+$ with at most $3^n$ parts satisfying the following property.
Fix $a,b\in\mathcal{F}$, $\sigma\in\Omega$ and $Z\in\mathcal{Z}$.
Then,
\begin{equation}\label{eq:surprising}
| PB(a,\lambda,b,\sigma) |=| PB(a,\mu,b,\sigma) | \qquad\mbox{ for all } \lambda,\mu\in Z.
\end{equation}
\end{conjecture}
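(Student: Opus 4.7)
The plan is to extend the strategy behind \Cref{thmB}---which establishes the same statement in type $\widetilde{A}_n$ for $b=\id$ and $\sigma=\id$---to all affine Weyl groups and to the full lowest two-sided cell $\mathfrak{C}$. The first ingredient is a Bruhat-to-dominance criterion valid in this generality, generalizing \Cref{prop:order}. The paper already attributes to Schremmer \cite{schremmer2024affine} a sketch of such a criterion for the entire affine Weyl group; building on that, the task is to translate the inequality $x \leq \Theta(a,\lambda,b,\sigma)$ into vertex-wise dominance comparisons between $V(x)$ and $V(\Theta(a,\lambda,b,\sigma))$, thereby reducing the combinatorics of Paper Boats to a lattice-theoretic problem inside $E$.

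With the criterion in place, I would define $\mathcal{Z}$ by assigning to each $\lambda \in (\Lambda^\vee)^+$ a symbol in $\{0,1,2\}^n$ whose $i$-th coordinate records whether $\lambda - \alpha_i^\vee$ is non-dominant, dominant but not a cover of $\lambda$, or a cover of $\lambda$ in the dominant coweight poset; Stembridge's description \cite{stembridge1998partial} of the covering relation should keep the number of parts bounded by $3^n$ even when non-simple coroots appear in \eqref{phi+lambda}. For two coweights $\lambda,\mu$ in the same class $Z$, the aim is to exhibit a translation of $E$ sending the dominant alcoves of $PB(a,\lambda,b,\sigma)$ bijectively to those of $PB(a,\mu,b,\sigma)$: the key identity is \eqref{vminus}, which shifts the vertices of $\theta_a(\lambda)$ rigidly, and the vertex-wise criterion from Step 1 then guarantees that both membership in $\mathcal{I}(a,\lambda,b,\sigma)$ and in each sub-interval $\mathcal{I}(a,\lambda',b,\sigma)$ with $\lambda' \lessdot \lambda$ transport correctly under the translation. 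Having controlled the dominant part, one extends to the whole Paper Boat via an appropriate $W_\mathrm{f}$-symmetry argument, which for $\mathfrak{B}$ is the left-invariance noted in the introduction but for arbitrary elements of $\mathfrak{C}$ requires an analogous statement to be established.

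The main obstacle is the first step: producing a Bruhat criterion for arbitrary $\Theta(a,\lambda,b,\sigma) \in \mathfrak{C}$ in every type, in a form concrete enough to drive the translation argument. A secondary and perhaps deeper difficulty is verifying that the bound $3^n$ actually suffices: the covering set \eqref{phi+lambda} can involve arbitrarily many non-simple coroots depending on $\lambda$, so some collapsing must occur among the local conditions that define the classes. This cancellation---and the fact that it occurs uniformly across types---is likely the deepest feature of the conjecture, and presumably reflects the unexpected polynomiality predicted by \Cref{superconj}.
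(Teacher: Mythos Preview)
The statement you are addressing is \Cref{conj: paperboat zones}, which in the paper is explicitly a \emph{conjecture}: it is stated but not proved. The paper establishes only the special case of affine type $A$ restricted to $W_\mathrm{aff}^+$ (that is, $a=\mathrm{id}$ and $\sigma=\mathrm{id}$), via \Cref{thm: paper boat}. There is therefore no proof in the paper against which to compare your proposal.

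Your submission is in any case not a proof but a research plan, and you are candid about this, naming the two principal obstacles yourself. Your outline does correctly identify the architecture of the paper's argument in the proved special case: the Bruhat-to-dominance criterion (\Cref{prop: main tool}), the zone partition (\Cref{def: zones}), the translation bijection built in the proof of \Cref{thm: paper boat}, and the reduction to the dominant part via left $W_\mathrm{f}$-invariance (\Cref{cor: reduction to C+ Paper Boat}). Where the paper stops, you correctly note that what is missing is (i) a Bruhat criterion valid in all types and for arbitrary elements of $\mathfrak{C}$, and (ii) an analogue of the $W_\mathrm{f}$-invariance for general $(a,\lambda,b,\sigma)$.

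One concrete point of divergence worth flagging: your proposed zone assignment---recording for each simple coroot whether $\lambda-\alpha_i^\vee$ is non-dominant, dominant-but-not-a-cover, or a cover---does not coincide with the paper's \Cref{def: zones}, which records whether $\langle\lambda,\alpha_i\rangle$ equals $0$, $1$, or is $\geq 2$. In type $A$ your three cases collapse to two for simple roots (if $\lambda-\alpha_i$ is dominant it is automatically a cover), so your partition would have at most $2^n$ parts and would not separate $\langle\lambda,\alpha_i\rangle=0$ from $\langle\lambda,\alpha_i\rangle=1$. But the paper's proof of \Cref{thm: paper boat} uses exactly this distinction: the translation argument requires $\Phi^+(\lambda)$ to be constant on a zone (\Cref{deltaphi}), and $\Phi^+(\lambda)$ depends on the full pattern of $0$'s, $1$'s and $\geq 2$'s because the non-simple roots $\alpha_{i,j}$ enter $\Phi^+(\lambda)$ through runs of $1$'s bounded by $0$'s. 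So your zone definition, as written, would already need to be repaired in the one case the paper does prove.
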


Suppose $\mathcal{Z}$ is such a partition.
As mentioned above, \Cref{conj: lowerint as pbs} and \Cref{conj: paperboat zones} would imply that
\begin{equation}\label{eq: lowerint implied by conjs}
|\mathcal{I}(a,\lambda,b,\sigma)| = \sum_{Z\in\mathcal{Z}} |\mathfrak{I}_\lambda\cap Z|\, | PB(a,Z,b,\sigma) |,
\end{equation}
where $| PB(a,Z,b,\sigma) |$ is the cardinality of $| PB(a,\mu,b,\sigma) |$ for any $\mu\in Z$.

\section{The Paper Boat in the dominant cone of type \texorpdfstring{$\widetilde{A}$}{A}}\label{sec: main results}
In this section, we prove \Cref{conj: lowerint as pbs} and \Cref{conj: paperboat zones} for affine type $A$ and for all elements in $W_\mathrm{aff}^+=\mathfrak{C}(\mathrm{id})$.

Since type $A$ is self-dual, we make no distinction between roots and coroots and between weights and coweights.
We use the following realization of the root system.
Consider the Euclidean space $\bbR^{n+1}$ with canonical basis $\varepsilon_1,\ldots,\varepsilon_{n+1}$ and dot product $\lr{-}{-}$.
Let $E$ be the hyperplane of vectors whose coordinate sum is zero.
The root system of type $A_n$ is
$
\Phi= \{  \varepsilon_i-\varepsilon_j \in E \mid 1\leq i,j \leq n+1, \, i\neq j \}.
$
The simple roots are $\Delta =\{ \alpha_i = \varepsilon_i-\varepsilon_{i+1} \mid 1\leq i \leq n  \}$. 
We stress that in type $A$ the vertices of the fundamental alcove are $V(\mathrm{id})=\{\mathbf{0},-\varpi_1,\ldots,-\varpi_n\}.$
For convenience, we write $\varpi_0=\mathbf{0}$.

The main tool we use to establish our results in this section is the following criterion, which allows us to compare elements in the Bruhat order within $W_\mathrm{aff}^+$. As mentioned in the introduction, this criterion states that we can compare two elements in the Bruhat order by comparing the vertices of their corresponding alcoves in the dominance order. For the sake of readability, we defer its proof to 
\Cref{sec: criterio}.

\begin{prop}\label{prop: main tool} Let $x,y\in W_\mathrm{aff}^+$.
Then $x\leq y$ (in the Bruhat order), if and only if $x(-\varpi_i)\leq y(-\varpi_i)$ for all $i\in \{0,1,\ldots ,n\}$ (in the dominance order).
Equivalently, $x\leq y$ in the Bruhat order if and only if $\nu\leq\mu$ in the dominance order for all $(\nu,\mu)\in V(x)\times V(y)$ satisfying $\nu\equiv\mu\mod\bbZ\Phi$.
\end{prop}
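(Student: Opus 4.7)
The proposition is an equivalence, and the two directions call for different techniques.

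Forward direction: reduce to Bruhat covers. Suppose $y' = ty \lessdot y$ with $t = s_{\beta,k}$ a reflection (taking $\beta \in \Phi^+$ without loss of generality). A direct calculation gives
\[
y(-\varpi_i) - y'(-\varpi_i) \,=\, \bigl(\langle y(-\varpi_i), \beta \rangle - k\bigr)\beta^\vee.
\]
The cover $y' \lessdot y$ is equivalent to $H_{\beta,k}$ separating $\mathcal{A}_\mathrm{id}$ from $\mathcal{A}_y$. Combined with $y \in W_\mathrm{aff}^+$ (so $\bar{\mathcal{A}_y}\subset C^+$ and hence $\langle u,\beta\rangle\geq 0$ for all $u\in \bar{\mathcal{A}_y}$), together with the fact that $\langle v,\beta\rangle \in (-1,0)$ for any $v$ in the interior of $\mathcal{A}_\mathrm{id}$, this forces $k\geq 0$ and $\langle y(-\varpi_i),\beta\rangle \geq k$. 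Since $\beta^\vee$ is a positive coroot, hence $\geq 0$ in dominance, one obtains $y'(-\varpi_i) \leq y(-\varpi_i)$. The general inequality for $x\leq y$ follows by chaining along any saturated chain from $y$ to $x$; to make this rigorous I would first strengthen the claim to apply whenever $y\in W_\mathrm{aff}^+$ and $x\leq y$ (without requiring $x\in W_\mathrm{aff}^+$), and then induct on $\ell(y)-\ell(x)$.

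Backward direction: induction on $\ell(y)$. The base case $y=\mathrm{id}$ reduces to showing that $x(-\varpi_i)\leq -\varpi_i$ forces $x=\mathrm{id}$; indeed $x(-\varpi_i)\in \bar{C^+}$ and lies in the $\bbZ\Phi$-coset of $-\varpi_i$, while the only dominant coweight $\leq -\varpi_i$ in dominance and in that coset is $-\varpi_i$ itself, so $x(-\varpi_i)=-\varpi_i$ for every $i$. For the inductive step I would pick a simple right descent $s\in S$ of $y$, let $j$ be the index such that $s$ fixes $-\varpi_i$ for $i\neq j$ and moves $-\varpi_j$, and apply the Lifting Property to reduce $x\leq y$ to $xs\leq ys$ (when $xs<x$) or $x\leq ys$ (when $xs>x$). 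For $i\neq j$ the vertex condition transfers unchanged since $s$ fixes those vertices on both sides; for $i=j$ one must derive the inequality for the moved vertex from the original hypothesis, using the explicit reflection formula and the fact that the difference $ys(-\varpi_j)-y(-\varpi_j)$ is a scalar multiple of a single coroot.

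The main obstacle is the backward direction, and it has two interlocking aspects: (a) the reduced element $ys$ need not lie in $W_\mathrm{aff}^+$, so the inductive hypothesis must be applied in a broader setting, and (b) the vertex inequality for the moved vertex $-\varpi_j$ is not automatic from the other vertex conditions. For (a) the cleanest resolution is to strengthen the statement so that it applies to a class stable under the reductions of the Lifting Property — for instance, by choosing descents whose corresponding wall of $\mathcal{A}_y$ lies on $\partial C^+$ whenever possible, or by treating $W_\mathrm{aff}^+ = {}^\mathrm{f}\mathcal{D}$ via its parabolic structure. For (b) one exploits the simply-laced, self-dual structure of type $A$ to write the difference at the moved vertex as a non-negative multiple of a positive coroot and extract the needed inequality by comparing with the other vertex conditions at the shared wall. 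I expect the technical bulk of \Cref{sec: criterio} to be devoted to this careful inductive bookkeeping.
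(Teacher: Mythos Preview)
Your approach is entirely different from the paper's, and while the forward direction is salvageable, the backward direction has a genuine gap that you do not close.

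\textbf{What the paper does.} The paper does not argue inductively at all. It passes to the realization of $W_\mathrm{aff}$ as the group $\perm$ of affine permutations and invokes the Bj\"orner--Brenti criterion: $y_1\le y_2$ in Bruhat order iff $y_1[i,i']\le y_2[i,i']$ for all $i,i'\in\bbZ$, where $y[i,i']=\#\{a\le i:\pi_y(a)\ge i'\}$. An explicit computation rewrites $y[i,i']$ as $\sum_m\max(0,Q+\nu_m)$ for a vector $\nu\in\bbZ^{n+1}$ depending on $y$ and the residues of $i,i'$; a partition--theoretic lemma then collapses the family of inequalities (over all $Q$) into a single dominance comparison of the sorted vectors. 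This produces a criterion valid for \emph{all} of $W_\mathrm{aff}$, involving $(\cdot)_\mathrm{dom}$; the specialization to $W_\mathrm{aff}^+$ is then just the observation that the relevant vectors are already (anti-)dominant, so the $(\cdot)_\mathrm{dom}$ can be removed after a $w_0$-twist. Both directions come for free from the external criterion.

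\textbf{Forward direction.} Your single-step cover computation is correct, but the chaining via your proposed strengthening is not. The implication ``$y'\lessdot y\Rightarrow y'(-\varpi_i)\le y(-\varpi_i)$'' is \emph{false} without $y\in W_\mathrm{aff}^+$: when the reflecting hyperplane is $H_{\beta,k}$ with $k\le -1$ the sign of the difference flips. (Already in $\tilde A_1$: $\mathrm{id}\lessdot s_0$, yet $\mathrm{id}(0)=0\not\le s_0(0)=-\alpha_1$.) What does work is to note that $W_\mathrm{aff}^+={}^\mathrm{f}\mathcal{D}$ is order-isomorphic to ${}_\mathrm{f}\mathcal{D}$ via $u\mapsto w_0 u$, and the latter is a graded subposet whose covers are Bruhat covers; hence any $x<y$ in $W_\mathrm{aff}^+$ are joined by a saturated Bruhat chain \emph{inside} $W_\mathrm{aff}^+$, and your one-step argument applies at every link.

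\textbf{Backward direction.} Here the gap is real and you do not resolve it. First, a small point: your base case $y=\mathrm{id}$ is ill-posed since $\mathrm{id}\notin W_\mathrm{aff}^+$; the minimal element is $w_0$. More seriously, the Lifting-Property reduction runs into both obstacles you list, and your proposed fixes are not arguments. There is no reason a right descent $s$ of $y$ can always be chosen with $ys\in W_\mathrm{aff}^+$, and even when it can, transferring the vertex inequality at the moved index is not a one-coroot bookkeeping exercise: in the case $xs>x$ you must show $x(-\varpi_j)\le ys(-\varpi_j)$, but you only know $x(-\varpi_j)\le y(-\varpi_j)$, and $ys(-\varpi_j)-y(-\varpi_j)$ can be a \emph{negative} multiple of a coroot. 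The paper sidesteps this entirely by importing a criterion that is already an equivalence.
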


Recall the parametrization in \eqref{def: param LTSC}.
By definition, we have $\Theta(\mathrm{id},\lambda,a,\mathrm{id})=\theta_a(\lambda)$.
The restriction of \eqref{def: param LTSC} to $W_\mathrm{aff}^+$, gives
\begin{equation}
W_\mathrm{aff}^+=\mathfrak{C}(\mathrm{id})=\{ \theta_a(\lambda)\mid a\in\mathcal{F}, \lambda\in\Lambda^+\}.
\end{equation}

Thus, for simplicity, for $\lambda\in\Lambda^+$ and $a\in\mathcal{F}$ we introduce the notation
\begin{align}
\mathcal{I}_a(\lambda)&=\mathcal{I}(\mathrm{id},\lambda,a,\mathrm{id})=\{w\in W_\mathrm{aff} \mid w\leq \theta_a(\lambda)\}, \\
PB_a(\lambda)&=PB(\mathrm{id},\lambda,a,\mathrm{id})=\li{\lambda} \setminus \bigcup_{\alpha \in \Phi^+(\lambda)}  \li{\lambda - \alpha},
\end{align}
where the set $\Phi^+(\lambda)=\{ \alpha \in  \Phi^+ \mid \lambda - \alpha \in \Lambda^+  \mbox{ and } \lambda - \alpha \lessdot \lambda \}$ was introduced in \eqref{phi+lambda}. That set was described for arbitrary root systems in \cite{stembridge1998partial}.
In type $A$,  the following explicit description is given.
If $\lambda \in \Lambda^+$,
then $\alpha\in \Phi^+(\lambda)$ if and only if one of the following conditions hold.
\begin{enumerate}
\item $\alpha \in \Delta$ and $ \lr{\lambda}{\alpha} \geq 2 $.
\item $\alpha=\alpha_{i,j} =\alpha_i +\alpha_{i+1}+ \ldots + \alpha_j$, $\lr{\lambda}{\alpha_i} =\lr{\lambda}{\alpha_j}=1$ and $\lr{\lambda}{\alpha_k}=0$ for all $i<k<j$. 
\end{enumerate}

\begin{example}
Consider  type $A_{12}$. If the coordinates of $\lambda$ in the fundamental weight basis are given by $(3,4,1,0,1,1,2,1,1,5,0,1)$, then 
$$\Phi^+(\lambda) = \{ \alpha_1,\alpha_2, \alpha_{3,5}, \alpha_{5,6}, \alpha_{7}, \alpha_{8,9} , \alpha_{10}  \}.$$
\end{example}

\begin{remark}
Given this description, it is easy to see that $\Phi^+(\lambda) = \emptyset$ if and only if $\lambda = \varpi_i$ for $0\leq i \leq n$.
It follows that $PB_a(\lambda) = \mathcal{I}_a(\lambda)$ if and only if $\lambda = \varpi_i$ for some $0\leq i \leq n$.
\end{remark}

\begin{definition}
For $\lambda\in\Lambda^+$ and $a\in \mathcal{F}$, we define $PB_a^+(\lambda)=PB_a(\lambda)\cap W_\mathrm{aff}^+$.
We set
\begin{equation}
c_a(\lambda) = |PB_a(\lambda)|  \qquad \mbox{and}  \qquad  c_a^+(\lambda) = |PB_a^+(\lambda)|.
\end{equation}
\end{definition}

We will show that $c_a(\lambda) = |W_\mathrm{f}| \cdot c_a^+(\lambda) $. 
Before proving this, we need to establish a preliminary result.

\begin{lemma} \label{lem:inv under s_i}
Let $x \in W_\mathrm{aff}^+$. Then, its lower interval  $\{y\in W_\mathrm{aff}\mid y\leq x\}$ is invariant under the action of $W_\mathrm{f}$ on the left.
\end{lemma}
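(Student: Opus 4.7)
The plan is to reduce to the case of a single simple reflection and then invoke the standard lifting property for Bruhat order. Since $W_\mathrm{f}$ is generated by $S_\mathrm{f}$, it suffices to show that for every $y \leq x$ and every $s \in S_\mathrm{f}$, one still has $sy \leq x$; an induction on $\ell(w)$ for $w \in W_\mathrm{f}$ then yields $wy \leq x$, which is the invariance claim.

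For the single-reflection step, the key observation is that $x \in W_\mathrm{aff}^+ = {}^\mathrm{f}\mathcal{D}$, as recorded in \eqref{eq: dom and antidom coset reps}. By definition of ${}^\mathrm{f}\mathcal{D}$, this means $sx < x$ for every $s \in S_\mathrm{f}$, i.e., $S_\mathrm{f}$ is contained in the left descent set of $x$. The lifting property for Coxeter systems then applies: whenever $y \leq x$ and $s \in S$ satisfies $sx < x$, one has $sy \leq x$ (this is an elementary and standard consequence of the subword/exchange characterization of Bruhat order, and can be found e.g.\ in any textbook on Coxeter groups, such as \cite{BB}). Applying this with $s \in S_\mathrm{f}$ gives exactly what we need.

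To conclude, write an arbitrary $w \in W_\mathrm{f}$ as a reduced word $w = s_{i_1} s_{i_2} \cdots s_{i_k}$ in $S_\mathrm{f}$. By the single-reflection step applied successively, starting from $y \leq x$ we obtain $s_{i_k} y \leq x$, then $s_{i_{k-1}} s_{i_k} y \leq x$, and so on, until $wy \leq x$. Hence $w \cdot \{y \in W_\mathrm{aff} \mid y \leq x\} \subseteq \{y \in W_\mathrm{aff} \mid y \leq x\}$ for every $w \in W_\mathrm{f}$; applying the same inclusion to $w^{-1}$ gives equality, proving invariance. There is essentially no obstacle here: the only subtlety is recognizing that membership in $W_\mathrm{aff}^+$ is precisely the hypothesis $S_\mathrm{f} \subseteq D_L(x)$ needed to trigger the lifting property.
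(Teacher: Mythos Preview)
Your proof is correct and follows essentially the same approach as the paper: both reduce to a single simple reflection $s \in S_\mathrm{f}$, use \eqref{eq: dom and antidom coset reps} to get $sx < x$, and then invoke the Lifting Property to conclude $sy \leq x$. The paper presents the Lifting step via an explicit case split on whether $sy < y$ or $y < sy$, whereas you cite the packaged form directly and spell out the induction over a reduced word for $w \in W_\mathrm{f}$; these are cosmetic differences only.
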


\begin{proof}
Suppose $y \leq x$, and let $s \in S_\mathrm{f}$.
It suffices to show that $sy \leq x$.
We first note that as $x\in W_\mathrm{aff}^+$, then $sx < x$ by \eqref{eq: dom and antidom coset reps}.
We consider two cases:
\begin{itemize}
\item If $s y < y$, then clearly $s y \leq x$.
\item If $y < s y$, the  Lifting Property for Coxeter systems together with the fact that $sx < x$, implies that $s y \leq x$.
\end{itemize}
\end{proof}

\begin{cor}\label{cor: reduction to C+ Paper Boat}
Let $\lambda \in \Lambda^+$ and $a \in \mathcal{F}$.
Then, $PB_a(\lambda)$ is invariant under the action of $W_\mathrm{f}$ on the left.
Consequently, 
\begin{equation} \label{eq: cardinalities}
c_a(\lambda) = |W_\mathrm{f}| \cdot c_a^+(\lambda).
\end{equation}
\end{cor}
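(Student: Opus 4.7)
The plan is to prove the two claims in sequence, with essentially no heavy lifting required beyond assembling \Cref{lem:inv under s_i} and the partition \eqref{eq: partition of affine}.

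For the first claim (invariance of $PB_a(\lambda)$ under the left $W_\mathrm{f}$-action), I would argue as follows. By construction $\theta_a(\lambda) \in W_\mathrm{aff}^+$, so \Cref{lem:inv under s_i} directly gives that $\mathcal{I}_a(\lambda)$ is stable under left multiplication by any element of $W_\mathrm{f}$. The same applies to each $\mu \in \Lambda^+$ with $\mu \lessdot \lambda$: since $\theta_a(\mu) \in W_\mathrm{aff}^+$, the lower interval $\mathcal{I}_a(\mu)$ is again $W_\mathrm{f}$-invariant. A finite union of left $W_\mathrm{f}$-invariant sets is $W_\mathrm{f}$-invariant, and the complement of one $W_\mathrm{f}$-invariant set inside another is $W_\mathrm{f}$-invariant; applying this to
\begin{equation*}
PB_a(\lambda) = \mathcal{I}_a(\lambda) \setminus \bigcup_{\mu \lessdot \lambda} \mathcal{I}_a(\mu)
\end{equation*}
yields the invariance.

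For the second claim (the cardinality formula), the key input is \eqref{eq: partition of affine}, which states $W_\mathrm{aff} = \bigsqcup_{w \in W_\mathrm{f}} w W_\mathrm{aff}^+$. In other words, the left action of $W_\mathrm{f}$ on $W_\mathrm{aff}$ is free, and $W_\mathrm{aff}^+$ is a complete set of orbit representatives. Consequently, any left $W_\mathrm{f}$-invariant subset $X \subseteq W_\mathrm{aff}$ decomposes as
\begin{equation*}
X = \bigsqcup_{w \in W_\mathrm{f}} w \cdot (X \cap W_\mathrm{aff}^+),
\end{equation*}
so $|X| = |W_\mathrm{f}| \cdot |X \cap W_\mathrm{aff}^+|$. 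Specializing to $X = PB_a(\lambda)$, which is left $W_\mathrm{f}$-invariant by the first part, and noting that $X \cap W_\mathrm{aff}^+ = PB_a^+(\lambda)$ by definition, gives the identity $c_a(\lambda) = |W_\mathrm{f}| \cdot c_a^+(\lambda)$.

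There is no real obstacle here; the corollary is essentially a packaging of \Cref{lem:inv under s_i} with the basic fact that $W_\mathrm{f} \backslash W_\mathrm{aff}$ is represented by $W_\mathrm{aff}^+$. The only minor point worth flagging explicitly is the assertion that $\theta_a(\mu) \in W_\mathrm{aff}^+$ for every $\mu \lessdot \lambda$ in $\Lambda^+$, which is immediate from \Cref{def: amigos} since $\mathcal{A}_{\theta_a(\mu)} = \mathcal{A}_{w_0 a} + \mu$ and $\mu$ is dominant.
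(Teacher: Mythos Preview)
Your proof is correct and follows essentially the same approach as the paper: both argue that $\theta_a(\lambda)$ and each $\theta_a(\mu)$ for $\mu\lessdot\lambda$ lie in $W_\mathrm{aff}^+$, apply \Cref{lem:inv under s_i} to get $W_\mathrm{f}$-invariance of $PB_a(\lambda)$, and then use the partition \eqref{eq: partition of affine} to deduce the cardinality formula. Your write-up is slightly more explicit about why complements and unions of invariant sets are invariant, but the content is identical.
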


\begin{proof}
By the definition of $\Phi^+(\lambda)$, it follows that $\theta_a(\lambda)$ and $\theta_a(\lambda - \alpha)$ lie in $W_\mathrm{aff}^+$ for all $\alpha \in \Phi^+(\lambda)$.
The invariance of $PB_a(\lambda)$ follows by a direct application of \Cref{lem:inv under s_i}.
Hence, by \eqref{eq: partition of affine}, we get
\begin{equation*}
PB_a(\lambda)=\bigsqcup_{w \in W_\mathrm{f}} wPB_a^+(\lambda).
\end{equation*}
Taking cardinalities on both sides gives \eqref{eq: cardinalities}.
\end{proof}

In order to prove \Cref{conj: lowerint as pbs}, we first need some previous results.
Let $\Lambda_0,\ldots,\Lambda_n$ be the cosets of $\Lambda$ modulo $\bbZ\Phi$, and put $\Lambda_i^+=\Lambda_i\cap\Lambda^+$.
It is well known that the component $(\Lambda_i,\leq)$ of the poset $(\Lambda,\leq)$ is a lattice.
In \cite{stembridge1998partial} Stembridge studies the subposet $(\Lambda^+,\leq)$ and shows that its components $(\Lambda_i^+,\leq)$ are also lattices.
Let $\wedge, \vee$ denote the meet and join operations on $\Lambda_i$, and $\wedge^+, \vee^+$ the corresponding operations on $\Lambda_i^+$. Stembridge shows that:
\begin{align}
\left( \sum a_\alpha \alpha \right) \wedge \left( \sum b_\alpha \alpha \right) &= \sum \operatorname{min}(a_\alpha,b_\alpha)\alpha, \label{eq: meet}\\
\left( \sum a_\alpha \alpha \right) \vee \left( \sum b_\alpha \alpha \right) &= \sum \operatorname{max}(a_\alpha,b_\alpha)\alpha, \label{eq: join}
\end{align}
and that $\wedge^+$ is simply the restriction of $\wedge$ to $\Lambda_i^+$. However, the restriction of $\vee$ does not coincide with $\vee^+$.
For example, in type $A_3$, consider $\lambda, \mu \in \Lambda^+$ given by $\lambda = (3,2,1)$ and $\mu = (1,2,3)$ in the simple root basis.
In this case, $\lambda\vee\mu = (3,2,3)\notin\Lambda^+$.
In what follows, we will use $\wedge$ and $\wedge^+$ interchangeably.

\begin{lemma}\label{lem: meet pesos}
Let $\lambda,\mu\in\Lambda_i^+$ and $a\in\mathcal{F}$.
Then, $\li{\lambda}\cap\li{\mu} = \li{ \lambda\wedge \mu}$.
\end{lemma}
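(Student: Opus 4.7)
The plan is to show the two inclusions separately. The inclusion $\mathcal{I}_a(\lambda \wedge \mu) \subseteq \mathcal{I}_a(\lambda) \cap \mathcal{I}_a(\mu)$ is immediate from the intertwining property established in \Cref{lem: intertwining orders}: since $\lambda\wedge\mu \leq \lambda$ and $\lambda\wedge\mu \leq \mu$ in the dominance order (and all three coweights lie in $\Lambda_i^+$, so share the same $\sigma$), we get $\theta_a(\lambda\wedge\mu) \leq \theta_a(\lambda)$ and $\theta_a(\lambda\wedge\mu) \leq \theta_a(\mu)$, yielding the inclusion of lower intervals.

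For the reverse inclusion, I would take $w \in \mathcal{I}_a(\lambda) \cap \mathcal{I}_a(\mu)$ and first reduce to the case $w \in W_\mathrm{aff}^+$. By \Cref{lem:inv under s_i}, both lower intervals $\mathcal{I}_a(\lambda)$ and $\mathcal{I}_a(\mu)$ are invariant under the left action of $W_\mathrm{f}$, and so is $\mathcal{I}_a(\lambda\wedge\mu)$. Writing $w = x w'$ with $x\in W_\mathrm{f}$ and $w' \in W_\mathrm{aff}^+$ via the partition \eqref{eq: partition of affine}, it suffices to prove $w' \leq \theta_a(\lambda\wedge\mu)$. Now \Cref{prop: main tool} converts the hypotheses $w' \leq \theta_a(\lambda)$ and $w' \leq \theta_a(\mu)$ into the vertex-wise dominance inequalities $w'(-\varpi_j) \leq \theta_a(\lambda)(-\varpi_j)$ and $w'(-\varpi_j) \leq \theta_a(\mu)(-\varpi_j)$ for every $j \in \{0,1,\ldots,n\}$, and reduces the desired conclusion to verifying $w'(-\varpi_j) \leq \theta_a(\lambda\wedge\mu)(-\varpi_j)$.

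The core of the argument is a direct coefficient computation, which is possible because $\lambda,\mu,\lambda\wedge\mu$ all belong to $\Lambda_i^+$ and therefore share a common $\sigma\in\Omega$ via \eqref{eq: sigmaofcoweight}. Using \Cref{lem: amigos algebraic def}, this common $\sigma$ lets us write $\theta_a(\nu)(-\varpi_j) = u_j + \nu$ for $\nu \in \{\lambda,\mu,\lambda\wedge\mu\}$, with the \emph{same} base vector $u_j := w_0 a \sigma^{-1}(-\varpi_j)$. Expand $u_j + \lambda - w'(-\varpi_j) = \sum_{\alpha\in\Delta} c_\alpha \alpha$ and $u_j + \mu - w'(-\varpi_j) = \sum_{\alpha\in\Delta} d_\alpha \alpha$, with $c_\alpha, d_\alpha \geq 0$ by the hypotheses. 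The explicit formula \eqref{eq: meet} for the meet then gives that the simple-root coefficients of $u_j + (\lambda\wedge\mu) - w'(-\varpi_j)$ are exactly $\min(c_\alpha, d_\alpha) \geq 0$, proving the required dominance inequality. Applying \Cref{prop: main tool} in reverse yields $w' \leq \theta_a(\lambda\wedge\mu)$, and the $W_\mathrm{f}$-invariance returns this to $w$.

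The main subtle point is ensuring the three vertex decompositions share the base point $u_j$; this is exactly where the hypothesis $\lambda,\mu \in \Lambda_i^+$ (rather than just $\Lambda^+$) is essential, since otherwise distinct permutations $\sigma_\lambda^{-1}$ and $\sigma_\mu^{-1}$ would relabel the vertices of the fundamental alcove differently and the elegant coefficient-wise comparison would break down. Once that alignment is in place, the computation is purely formal.
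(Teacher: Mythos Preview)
Your proof is correct and follows essentially the same route as the paper: reduce to $W_\mathrm{aff}^+$ via $W_\mathrm{f}$-invariance, use \Cref{lem: intertwining orders} for the easy inclusion, and for the other inclusion apply \Cref{prop: main tool} together with the fact that $\sigma_\lambda=\sigma_\mu$ to get a common base point $u_j=w_0a\sigma^{-1}(-\varpi_j)$. The only cosmetic difference is that you carry out the simple-root coefficient computation explicitly via \eqref{eq: meet}, whereas the paper simply invokes the universal property of the meet (if $w(-\varpi_j)-u_j\leq\lambda$ and $\leq\mu$, then $\leq\lambda\wedge\mu$); these are equivalent.
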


\begin{proof}
By \Cref{lem:inv under s_i} and \eqref{eq: partition of affine}, it is enough to show that 
\begin{equation}\label{eq: intersection sums1}
\lip{\lambda}\cap\lip{\mu} = \lip{ \lambda\wedge \mu},
\end{equation}
where $\lip{\lambda}\coloneqq \li{\lambda}\cap W_\mathrm{aff}^+$.
Let $L$ and $R$ be the left- and right-hand sides of \eqref{eq: intersection sums1}, respectively.
The containment $L\supseteq R$ is a consequence of \Cref{lem: intertwining orders}.
Let $w\in L$ and $0\leq j\leq n$.
Put $\sigma=\sigma_\lambda=\sigma_\mu$ as in \eqref{eq: sigmaofcoweight}.
Let us define the vector $v:=w_0a\sigma^{-1}(-\varpi_j)$.
By \Cref{lem: amigos algebraic def} and \Cref{prop: main tool}, we have
\begin{align*}
w(-\varpi_j)&\leq \theta_a(\lambda)(-\varpi_j) = v +\lambda, \\
w(-\varpi_j)&\leq \theta_a(\mu)(-\varpi_j) = v +\mu.
\end{align*}
Since $w(-\varpi_j)-v$ is smaller than  $\lambda$ and $\mu$, it must be smaller than $\lambda\wedge\mu$.
That is, $$w(-\varpi_j)\leq v + (\lambda\wedge\mu) = \theta_a(\lambda\wedge\mu)(-\varpi_j).$$
Since $j$ was arbitrary, \Cref{prop: main tool} gives $w\in R$.
\end{proof}

We now establish \Cref{conj: lowerint as pbs} for affine type $A$ and $W_\mathrm{aff}^+$.

\begin{theorem}\label{thm: loweint as pbs}
Let $\lambda \in \Lambda^+$ and $a \in \mathcal{F}$.  
Then,  
\begin{equation}  \label{eq: loweint as pbs}
\li{\lambda} = \bigsqcup_{\mu \in \mathfrak{I}_\lambda} PB_a(\mu).  
\end{equation}  
\end{theorem}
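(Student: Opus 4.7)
The plan is to separate the equality into three assertions: containment of the right-hand side in the left-hand side, the reverse containment, and disjointness of the union. The first assertion is essentially immediate from the order-intertwining result: if $\mu\in\mathfrak{I}_\lambda$ then $\mu\leq\lambda$ in the dominance order on $\Lambda^+$, so by \Cref{lem: intertwining orders} (applied with $a=\mathrm{id}$, $b=a$, $\sigma=\mathrm{id}$) we get $\theta_a(\mu)\leq\theta_a(\lambda)$, hence $PB_a(\mu)\subseteq\li{\mu}\subseteq\li{\lambda}$.

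For the reverse inclusion, I would fix $w\in\li{\lambda}$ and consider the set
\[
S(w):=\{\mu\in\mathfrak{I}_\lambda \mid w\leq \theta_a(\mu)\}.
\]
It is nonempty because $\lambda\in S(w)$, and it lies in the single coset $\lambda+\mathbb{Z}\Phi$, so by \Cref{lem: meet pesos} it is closed under the meet $\wedge$ computed in $\Lambda_i^+$. Since $S(w)$ is finite, it has a unique minimum $\mu^\star$. The claim is that $w\in PB_a(\mu^\star)$. Indeed, if $w\in\li{\mu^\star-\alpha}$ for some $\alpha\in\Phi^+(\mu^\star)$, then $\mu^\star-\alpha$ is dominant, lies below $\mu^\star\leq\lambda$ in the dominance order, hence belongs to $\mathfrak{I}_\lambda$, and would be a smaller element of $S(w)$, contradicting minimality.

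For disjointness, I would argue by contradiction: suppose $w\in PB_a(\mu_1)\cap PB_a(\mu_2)$ with $\mu_1\neq\mu_2$. Both lie in $\mathfrak{I}_\lambda$, so in the same coset, and \Cref{lem: meet pesos} gives $w\in\li{\mu_1\wedge\mu_2}$. Without loss of generality $\mu_1\wedge\mu_2\neq\mu_1$, that is, $\mu_1\wedge\mu_2<\mu_1$ strictly in $\Lambda_i^+$. Then I can take a saturated chain in $(\Lambda_i^+,\leq)$ between them and pick its top covering relation $\nu\lessdot\mu_1$ with $\mu_1\wedge\mu_2\leq\nu$. By definition $\nu=\mu_1-\alpha$ for some $\alpha\in\Phi^+(\mu_1)$, and by \Cref{lem: intertwining orders} the inclusion $\li{\mu_1\wedge\mu_2}\subseteq\li{\nu}$ holds. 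Hence $w\in\li{\mu_1-\alpha}$, contradicting $w\in PB_a(\mu_1)$.

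The main obstacle is really packaged into \Cref{lem: meet pesos}: once the intersection of two lower intervals is identified with the lower interval of the meet, the proof reduces to the standard lattice-theoretic argument sketched above, with the covering-relation description of $\Phi^+(\mu)$ from \cite{stembridge1998partial} used only to pass from a strict inequality $\mu_1\wedge\mu_2<\mu_1$ to an explicit $\alpha\in\Phi^+(\mu_1)$.
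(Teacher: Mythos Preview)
Your proof is correct and follows essentially the same approach as the paper: both use \Cref{lem: intertwining orders} for the inclusion $\supseteq$, both prove $\subseteq$ by passing to a minimal $\mu$ with $w\leq\theta_a(\mu)$, and both obtain disjointness from \Cref{lem: meet pesos} together with a saturated chain in $(\Lambda_i^+,\leq)$. The only cosmetic differences are that you invoke meet-closure to secure a \emph{unique} minimum in the reverse inclusion (the paper just takes any minimal element), and you fold the paper's two disjointness cases into a single WLOG argument.
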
  

\begin{proof}  
By \Cref{lem: intertwining orders}, we know that $\theta_a(\mu) \leq \theta_a(\lambda)$ for all $\mu \in \mathfrak{I}_\lambda$, thus the inclusion $\supseteq$ follows.  
Conversely, let $w \leq \theta_a(\lambda)$, and let $\mu \in \mathfrak{I}_\lambda$ be minimal such that $w \leq \theta_a(\mu)$.  
By minimality, $w \in PB_a(\mu)$, and hence the inclusion $\subseteq$ holds.  

It remains to prove that the union is disjoint.  
Let $\mu_1, \mu_2 \in \mathfrak{I}_\lambda$ with $\mu_1 \neq \mu_2$, and suppose there exists $w \in PB_a(\mu_1) \cap PB_a(\mu_2)$.  
By \Cref{lem: meet pesos}, we have $w \leq \theta_a(\mu_1 \wedge \mu_2)$.  

Let $i$ be such that $\lambda \in \Lambda_i^+$. 
We analyze two cases:  
\begin{enumerate}[(a)]  
\item Let us suppose that $\mu_1 \leq \mu_2$. As $\mu_1 \neq \mu_2$,
there exists a weight ${\nu}$ such that $\mu_1 \leq {\nu} \lessdot \mu_2$.  
By \Cref{lem: intertwining orders}, we have $w \leq \theta_a({\nu})$, contradicting the assumption that $w \in PB_a(\mu_2)$.  

\item Let us suppose that $\mu_1 \nleq \mu_2$. Then $\mu_1 \wedge \mu_2\neq \mu_1$ and there exists a weight ${\nu}$ such that $\mu_1 \wedge \mu_2 \leq {\nu} \lessdot \mu_1$.  
Since $w \leq \theta_a(\mu_1 \wedge \mu_2)$, \Cref{lem: intertwining orders} implies $w \leq \theta_a({\nu})$.
This contradicts the assumption that $w \in PB_a(\mu_1)$. 
\end{enumerate}  

Thus, we conclude that $PB_a(\mu_1) \cap PB_a(\mu_2) = \emptyset$ for $\mu_1 \neq \mu_2$, as required.  
\end{proof}

\begin{remark}
We note that the only missing ingredient to prove \Cref{conj: lowerint as pbs} in full generality, is a general version of \Cref{lem: meet pesos}. 
The proof of this lemma currently depends on  \Cref{prop: main tool}.
\end{remark}

We now focus on \Cref{conj: paperboat zones}.
The following lemma is a key technical step towards the proof.
It claims that $\mathcal{A}(PB_a^+(\lambda))$ is contained in the cone $C^+ +\lambda - \rho$.

\begin{lemma} \label{lemma Trasl is Domin}
Let $w \in PB_a^+(\lambda)$.
If $\mu\in V(w)$, then $-1 \leq \langle \mu -\lambda  , \alpha  \rangle $ for all $\alpha \in \Delta$.
\end{lemma}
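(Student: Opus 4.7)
The plan is to proceed by contradiction, assuming there exist a vertex $\mu = w(-\varpi_j) \in V(w)$ and a simple root $\alpha_i \in \Delta$ with $\langle \mu - \lambda, \alpha_i\rangle \leq -2$. My aim is to derive $w \leq \theta_a(\lambda - \alpha_i)$ with $\alpha_i \in \Phi^+(\lambda)$, which contradicts $w \in PB_a^+(\lambda)$. The membership $\alpha_i \in \Phi^+(\lambda)$ will be immediate: since $w \in W_\mathrm{aff}^+$, the closed alcove $\overline{\mathcal{A}_w}$ is contained in $\overline{C^+}$, so $\langle \mu, \alpha_i\rangle \geq 0$, forcing $\langle \lambda, \alpha_i\rangle \geq 2$. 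By the explicit description of $\Phi^+(\lambda)$ recalled after the definition of the Paper Boat in type $A$, any simple root with this property lies in $\Phi^+(\lambda)$.

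The crucial step is to propagate the bound at the single vertex $\mu$ to every vertex of $\mathcal{A}_w$. For any $k$, I would observe that $w(-\varpi_k) - w(-\varpi_j) = \bar w(\varpi_j - \varpi_k)$, where $\bar w \in W_\mathrm{f}$ is the linear part of $w$; pairing with $\alpha_i$ and using orthogonality gives $\langle \varpi_j - \varpi_k, \bar w^{-1}(\alpha_i)\rangle$. In type $A$, every root expanded in the simple basis has coefficients either all in $\{0,1\}$ or all in $\{0,-1\}$, so this pairing lies in $\{-1,0,1\}$ (with the convention $\varpi_0 = \mathbf{0}$). Consequently $\langle w(-\varpi_k) - \lambda, \alpha_i\rangle \leq 1 + (-2) = -1$ for every $k$.

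To finish, I would apply \Cref{prop: main tool} to reduce $w \leq \theta_a(\lambda - \alpha_i)$ to the vertex-wise dominance inequalities $w(-\varpi_k) \leq \theta_a(\lambda)(-\varpi_k) - \alpha_i$ for all $k$. Writing $D_k = \theta_a(\lambda)(-\varpi_k) - w(-\varpi_k) = \sum_m c_m^{(k)}\alpha_m$ with $c_m^{(k)} \in \bbZ_{\geq 0}$ (already known from $w \leq \theta_a(\lambda)$), the content is that $c_i^{(k)} \geq 1$ for every $k$. Combining the previous paragraph with the observation that $\theta_a(\lambda)(-\varpi_k) - \lambda \in V(w_0 a) \subset \overline{\Pi^+}$ pairs non-negatively with $\alpha_i$ yields $\langle D_k, \alpha_i\rangle \geq 1$. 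In type $A$, this pairing equals $2c_i^{(k)} - c_{i-1}^{(k)} - c_{i+1}^{(k)}$ (with the convention $c_0^{(k)} = c_{n+1}^{(k)} = 0$), so together with $c_{i\pm 1}^{(k)} \geq 0$ and the integrality of $c_i^{(k)}$ this forces $c_i^{(k)} \geq 1$, closing the argument.

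The main obstacle I foresee is the middle step: the alcove-diameter bound $\langle w(-\varpi_k) - w(-\varpi_j), \alpha_i\rangle \leq 1$ is genuinely type-$A$ in nature, since in other root systems the coefficients of a root in the simple basis may exceed $1$ in absolute value, and propagating the bound from one vertex to every vertex would require a different argument.
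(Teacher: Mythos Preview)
Your proposal is correct and follows essentially the same approach as the paper's proof: contradiction, propagate the bound to all vertices, show $\alpha_i\in\Phi^+(\lambda)$, then use \Cref{prop: main tool} together with $\langle D_k,\alpha_i\rangle\geq 1$ and the Cartan matrix to force the $\alpha_i$-coefficient of $D_k$ to be at least $1$. The only cosmetic difference is in the alcove-diameter step: the paper argues geometrically that any two vertices $\mu,\nu$ of the same alcove satisfy $|\langle \mu-\nu,\alpha\rangle|\leq 1$ directly from the alcove inequalities (using that vertices are weights, hence pair integrally with $\alpha$), whereas you compute $\bar w(\varpi_j-\varpi_k)$ explicitly and invoke the $\{0,\pm1\}$ coefficient structure of type~$A$ roots; both yield the same bound.
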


\begin{proof}
We argue by contradiction.
Let us assume that there exists a vertex $\mu \in V(w)$ and some $ \alpha \in \Delta $ such that $ \langle \mu - \lambda, \alpha \rangle \leq -2 $.

On the one hand, let $\nu \in V(w)$.
As the inner product of a weight and $\alpha$ is an integer, the definition of an alcove implies that $\langle \mu, \alpha \rangle - 1 \leq \langle \nu, \alpha \rangle \leq \langle \mu, \alpha \rangle + 1$.
Then $-1 \leq \langle \nu - \mu, \alpha \rangle \leq 1$, which in turn gives $\langle \nu - \lambda, \alpha \rangle \leq -1$.

On the other hand, \eqref{def: paralele} and \eqref{eq: parallel trasladadao} show that $\langle \eta - \lambda, \alpha \rangle \geq 0$ for all $\eta \in V(\theta_a(\lambda))$.
Combining these two results, we conclude that
\begin{equation} \label{eq: DOMI1}
\langle \eta - \nu, \alpha \rangle \geq 1
\end{equation}
for all pairs $(\eta, \nu)$ where $\eta \in V(\theta_a(\lambda))$ and $\nu \in V(w)$.

Now, let $ (\eta, \nu) $ be such a pair and suppose that $\eta \equiv \nu \mod\bbZ\Phi$.
Since $ w \in PB_a^+(\lambda)$, we can apply \Cref{prop: main tool} to conclude that $\nu\leq\eta$ in the dominance order.
That is,
\begin{equation} \label{eq: DOMI2}
\langle \eta - \nu, \varpi_\beta \rangle \geq 0 \quad \text{ for all } \beta\in \Delta .
\end{equation}

As $\mu\in V(w)$ and $ \mathcal{A}_w \subseteq C^+ $, we have $0\leq \langle \mu, \alpha \rangle$. Then, as by assumption $ \langle \mu - \lambda, \alpha \rangle \leq -2$, we must have $ \langle \lambda, \alpha \rangle \geq 2 $. This implies that $\alpha\in\Phi^+(\lambda)$. In other words $\theta_a(\lambda - \alpha)$ is dominant so we can apply 
\Cref{prop: main tool} to compare it with $w.$ To prove $ w \leq \theta_a(\lambda - \alpha)$, by Equation \eqref{vminus}, we just need to prove that for $(\eta,\nu)$ as above, 
$$\langle \eta-\alpha-\nu, \varpi_{\beta}\rangle \geq 0\quad \text{ for all } \beta\in \Delta .$$
For $\beta \neq \alpha$ this follows directly from \eqref{eq: DOMI2}. For the case $\beta =\alpha$ we need to make the following calculation:
in \eqref{eq: DOMI1} replace $\alpha$ by $\sum_{\beta\in \Delta}\langle \alpha, \beta\rangle \varpi_{\beta}$.
Since $\lr{\alpha}{\beta} \leq 0$ for all $\beta \neq \alpha$, by using  \eqref{eq: DOMI2} we conclude $ \langle \eta - \nu, \varpi_\alpha \rangle \geq 1 $.
So we have proved that $ w \leq \theta_a(\lambda - \alpha)$, but this contradicts the fact that $ w \in PB_a(\lambda) $.
Thus, our initial assumption is false, and the proof is complete. 
\end{proof}

We now define a partition of the set $\Lambda^+$.
The motivation behind this definition is that for $a\in \mathcal{F}$ fixed,  the numbers $c_{a}(\lambda)$ remain invariant when $\lambda$ runs in the parts of this partition. 

\begin{definition}\label{def: zones}
For  $\boldsymbol{i}=(i_1, \ldots , i_n)\in \{0,1,2\}^{n} $, we define
\begin{equation}
Z(\boldsymbol{i}) =  \{  \lambda \in \Lambda^+ \mid \langle \lambda , \alpha_j \rangle = i_j \mbox{ if } i_j=0, 1, \mbox{ and } \langle \lambda , \alpha_j \rangle \geq 2 \mbox{ if }\ i_j=2 \}.
\end{equation}
We call $Z(\boldsymbol{i})$ the zone associated to $\boldsymbol{i}$. 
\end{definition}
It is clear that the set $\mathcal{Z} = \{ Z(\boldsymbol{i} )  \mid   \boldsymbol{i}\in \{0,1,2\}^{n}\}$ is a partition of $\Lambda^+$.
\begin{remark}\label{deltaphi}
Let $Z\in\mathcal{Z}$.
By the description of $\Phi^+(\lambda)$ for type $A$, we have that $\Phi^+(\lambda)$ remains constant for all $\lambda\in Z$.
We note that there is an equality of sets $\Phi^+(\lambda)=\Delta$ if and only if $\lambda\in Z(2,2,\ldots,2)$.
\end{remark}

We now prove \Cref{conj: paperboat zones} in affine type $A$ and for $W_\mathrm{aff}^+$.

\begin{theorem}\label{thm: paper boat} Let $a\in\mathcal{F}$ and $Z\in\mathcal{Z}$.
Then $c_a(\lambda)$ is constant for all $\lambda\in Z$.    
\end{theorem}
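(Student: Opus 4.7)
The plan is to reduce to the dominant part via the Corollary giving $c_a(\lambda) = |W_\mathrm{f}| \cdot c_a^+(\lambda)$, and then show that for any $\lambda, \mu \in Z$, translation of alcoves by $\eta = \mu - \lambda$ induces a bijection $PB_a^+(\lambda) \to PB_a^+(\mu)$. This simultaneously proves the enhanced statement of Theorem B (translation of dominant alcoves). The bijection is set up as follows: for $w \in W_\mathrm{aff}^+$, let $w' \in W_\mathrm{aff}$ be the unique element with $\mathcal{A}_{w'} = \mathcal{A}_w + \eta$. Since the vertices of $V(w)$ are labeled by $w(-\varpi_i)$ via congruence classes modulo $\bbZ\Phi$, the labeling after translation is permuted by some $\pi \in S_{n+1}$ with $-\varpi_{\pi(i)} \equiv -\varpi_i + \eta \pmod{\bbZ\Phi}$, so that $w'(-\varpi_{\pi(i)}) = w(-\varpi_i) + \eta$, and analogously $\theta_a(\lambda')(-\varpi_{\pi(i)}) = \theta_a(\lambda'-\eta)(-\varpi_i) + \eta$ for any relevant $\lambda'$.

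The main technical step, and the chief obstacle, is checking that $w' \in W_\mathrm{aff}^+$, so that \Cref{prop: main tool} can be applied to $w'$. For this I would use \Cref{lemma Trasl is Domin} to control each vertex $\nu \in V(w)$ of a Paper Boat element. Writing $Z = Z(\boldsymbol{i})$ and doing case analysis on $i_j \in \{0,1,2\}$: if $i_j \in \{0,1\}$ then $\langle \lambda, \alpha_j\rangle = \langle \mu, \alpha_j\rangle = i_j$ forces $\langle \eta, \alpha_j\rangle = 0$, so dominance of $\mathcal{A}_w + \eta$ with respect to $\alpha_j$ follows from dominance of $\mathcal{A}_w$; if $i_j = 2$ then $\langle \eta, \alpha_j\rangle \geq 2 - \langle \lambda, \alpha_j\rangle$, while Lemma 3.4 gives $\langle \nu, \alpha_j\rangle \geq \langle \lambda, \alpha_j\rangle - 1$, and adding yields $\langle \nu + \eta, \alpha_j\rangle \geq 1 \geq 0$. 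Hence $\mathcal{A}_w + \eta \subset C^+$ for every $w \in PB_a^+(\lambda)$, so $w' \in W_\mathrm{aff}^+$.

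Having this in hand, I would translate the Bruhat conditions. By \Cref{prop: main tool} and the fact that $\theta_a(\lambda), \theta_a(\mu), \theta_a(\lambda - \alpha), \theta_a(\mu-\alpha)$ all lie in $W_\mathrm{aff}^+$:
\begin{equation*}
w \leq \theta_a(\lambda) \iff w(-\varpi_i) \leq \theta_a(\lambda)(-\varpi_i) \text{ for all } i \iff w'(-\varpi_{\pi(i)}) \leq \theta_a(\mu)(-\varpi_{\pi(i)}) \text{ for all } i \iff w' \leq \theta_a(\mu),
\end{equation*}
where the middle equivalence uses translation invariance of the dominance order. By \Cref{deltaphi} the sets $\Phi^+(\lambda)$ and $\Phi^+(\mu)$ coincide, so $\lambda - \alpha$ and $\mu - \alpha$ are simultaneously dominant covers, and the same argument gives $w \leq \theta_a(\lambda - \alpha)$ if and only if $w' \leq \theta_a(\mu - \alpha)$ for every $\alpha \in \Phi^+(\lambda)$. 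Combining, $w \in PB_a^+(\lambda)$ if and only if $w' \in PB_a^+(\mu)$.

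Therefore translation by $\eta$ is a bijection $PB_a^+(\lambda) \to PB_a^+(\mu)$, proving $c_a^+(\lambda) = c_a^+(\mu)$, and the theorem follows from \Cref{cor: reduction to C+ Paper Boat}. The only non-routine ingredient is the dominance check in the middle paragraph: without \Cref{lemma Trasl is Domin}, a translation by a non-dominant $\eta$ could push vertices out of $C^+$, and the entire argument would collapse.
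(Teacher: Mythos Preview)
Your proposal is correct and follows essentially the same approach as the paper: reduce to $c_a^+$ via \Cref{cor: reduction to C+ Paper Boat}, translate alcoves by $\mu-\lambda$, use \Cref{lemma Trasl is Domin} for the dominance check, and invoke \Cref{prop: main tool} together with \Cref{deltaphi} to transport the Bruhat conditions. The only cosmetic differences are that the paper phrases the argument as an injection in each direction (rather than asserting a bijection outright) and handles the vertex relabeling via the congruence clause of \Cref{prop: main tool} instead of your explicit permutation $\pi$.
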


\begin{proof}
Let $\lambda , \mu \in Z$.
We must show that $c_a(\lambda) = c_a(\mu)$.
By \Cref{cor: reduction to C+ Paper Boat} it suffices to prove that $c_a^+(\lambda) = c_a^+(\mu)$.
We prove this last equality by constructing an injective map 
\begin{equation}
f:PB_a^+(\lambda) \rightarrow PB_a^+(\mu).
\end{equation}
This will imply that $c_a^+(\lambda) \leq c_a^+(\mu)$.
By interchanging the roles of $\lambda$ and $\mu$ we will obtain the opposite inequality.
Therefore, we will have $c_a^+(\lambda) = c_a^+(\mu)$ and we will conclude. 

Given $w\in PB_a^+(\lambda)$ we set $f(w) = y$ for the unique $y\in W_\mathrm{aff}$ such that $\mathcal{A}_{y}=t_{\mu - \lambda}(\mathcal{A}_w)$.
If we  show that $f$ is well defined, \ie, $f(w)\in PB_a^+(\mu)$, the injectivity comes for free, as $f$ is a translation. Proving this is equivalent to proving the following three claims:
\begin{enumerate}[(a)]
\item\label{item: fw dom} ${ f(w)\in W_\mathrm{aff}^+}$.
\item $f(w) \leq \theta_{a}(\mu)$.
\item $f(w) \not \leq \theta_{a}(\mu -\alpha)$ for all $\alpha\in \Phi^+ (\mu)$.
\end{enumerate}

The proof of these statements is as follows.

\begin{enumerate}[(a)]
\item 
It is enough to prove that for any $v\in V(f(w))$ we have $v\in C^{+}$. 
By definition of $f$ we have that  $v+\lambda -\mu\in V(w)$.
By \Cref{lemma Trasl is Domin} we have $-1 \leq \langle v -\mu , \alpha \rangle $ for all $\alpha \in \Delta$.
Therefore
\begin{equation}
-1 +\langle \mu,\alpha \rangle \leq \langle v,\alpha \rangle   .
\end{equation}
If $\langle \mu,\alpha \rangle \geq 1$ for some $\alpha\in \Delta$ then $\langle v,\alpha \rangle \geq 0 $. On the other hand, if  $\langle \mu,\alpha \rangle =0$ for some $\alpha \in \Delta$ then $\langle \lambda,\alpha \rangle =0$ since $\lambda$  and $\mu$ belong to the same zone. In this case we have
\begin{equation}
0 \leq \langle v+\lambda -\mu , \alpha  \rangle  = \langle v, \alpha  \rangle, 
\end{equation}
the inequality is because $V(w)\subset C^+.$
All in all, we conclude that  $0 \leq \langle v, \alpha \rangle$ for all $\alpha\in \Delta$, that is, $v\in C^+$.

\item  Let $v\in V(f(w))$ and $v'\in V(\theta_a(\mu))$ be such that $v'  \equiv v \mod \mathbb{Z}\Phi$.
Then, $\lambda - \mu + v'  \equiv  \lambda - \mu +v \mod \mathbb{Z}\Phi$.
We have that $ \lambda - \mu + v \in V(w)$ and $\lambda - \mu +v' \in V(\theta_a(\lambda))$. 
Since $w\in PB_{a}^+(\lambda)$, \Cref{prop: main tool} implies that 
\begin{equation}
\lambda - \mu +v  \leq \lambda - \mu + v'  
\end{equation}
in the dominance order, so that $v\leq v'$.
As $v$ was an arbitrary vertex of $f(w),$ and $f(w)\in W_\mathrm{aff}^+$ by \ref{item: fw dom}, \Cref{prop: main tool} implies $f(w) \leq \theta_{a}(\mu)$ in the Bruhat order. 

\item  We proceed by contradiction.  
Suppose there exists some $\beta \in \Phi^+(\mu)$ such that $f(w) \leq \theta_a(\mu - \beta)$.
Since $\lambda,\mu\in Z$, we have $\Phi^+(\mu) = \Phi^+(\lambda)$ by \Cref{deltaphi}.  
Consequently, as $w\in PB_a^+(\lambda)$, we have that  $w \not\leq \theta_a(\lambda - \beta)$ (in the Bruhat order).  
By \Cref{prop: main tool}, there exists a pair $(v, v') \in V(w) \times V(\theta_a(\lambda - \beta))$ with $v \equiv v'\mod\mathbb{Z}\Phi$ such that $v \not\leq v'$ (in the dominance order).  
This implies $\mu - \lambda + v \not\leq \mu - \lambda + v'$.  
However, $(\mu - \lambda + v, \mu - \lambda + v') \in V(f(w)) \times V(\theta_a(\mu - \beta))$, which contradicts our assumption by \Cref{prop: main tool}.
\end{enumerate}
\end{proof}

We finish this section by recalling \Cref{eq: lowerint implied by conjs}, i.e., combining   \Cref{thm: loweint as pbs,thm: paper boat}   we obtain a weighted counting formula for the sizes of all lower Bruhat intervals associated to elements in $W_\mathrm{aff}^+$.

\begin{cor}\label{cor:weigthed_sum}
For all $\lambda\in\Lambda^+$ and $a\in\mathcal{F}$, we have
\begin{equation}\label{eq:weighted_sum}
|\li{\lambda}| = \sum_{Z\in\mathcal{Z}} c_a(Z) |\mathfrak{I}_\lambda \cap Z |,
\end{equation}
where $c_a(Z)$ is $c_a(\mu)$ for any $\mu\in Z$.
\end{cor}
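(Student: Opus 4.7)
The statement is an essentially immediate consequence of the two main theorems of the section, so my plan is simply to assemble them. The strategy is: take cardinalities in the tiling from \Cref{thm: loweint as pbs}, reindex the resulting sum using the partition $\mathcal{Z}$, and then invoke \Cref{thm: paper boat} to factor out the (constant per zone) cardinality $c_a(Z)$.

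Concretely, I would proceed as follows. First, since \Cref{thm: loweint as pbs} gives
\begin{equation*}
\li{\lambda} = \bigsqcup_{\mu \in \mathfrak{I}_\lambda} PB_a(\mu),
\end{equation*}
taking cardinalities on both sides and using that the union is disjoint yields
\begin{equation*}
|\li{\lambda}| = \sum_{\mu \in \mathfrak{I}_\lambda} |PB_a(\mu)| = \sum_{\mu \in \mathfrak{I}_\lambda} c_a(\mu).
\end{equation*}
Next, since $\mathcal{Z}$ is a partition of $\Lambda^+$ and $\mathfrak{I}_\lambda \subset \Lambda^+$, the intersections $\{\mathfrak{I}_\lambda \cap Z\}_{Z\in\mathcal{Z}}$ partition $\mathfrak{I}_\lambda$, so the sum can be reindexed as
\begin{equation*}
\sum_{\mu \in \mathfrak{I}_\lambda} c_a(\mu) \;=\; \sum_{Z\in\mathcal{Z}} \sum_{\mu \in \mathfrak{I}_\lambda \cap Z} c_a(\mu).
\end{equation*}
Finally, \Cref{thm: paper boat} asserts that $c_a(\mu)$ depends only on the zone $Z$ containing $\mu$; denoting this common value by $c_a(Z)$, the inner sum becomes $c_a(Z)\cdot|\mathfrak{I}_\lambda\cap Z|$, yielding the desired formula \eqref{eq:weighted_sum}.

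There is no real obstacle here, since all the substantive content has already been established: the disjointness in \Cref{thm: loweint as pbs} is what makes the cardinalities add, and the invariance in \Cref{thm: paper boat} is what makes $c_a(Z)$ well-defined. The only thing to be slightly careful about is to note explicitly that $\mathfrak{I}_\lambda \subset \Lambda^+$ (so that the partition $\mathcal{Z}$ of $\Lambda^+$ induces a partition of $\mathfrak{I}_\lambda$), which is immediate from the description $\mathfrak{I}_\lambda = \mathsf{P}(\lambda)\cap C^+\cap(\lambda+\bbZ\Phi)$ in \eqref{eq: ideal as latticepoints}.
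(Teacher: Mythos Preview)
Your proposal is correct and follows exactly the same approach as the paper: take cardinalities in the disjoint union from \Cref{thm: loweint as pbs}, split the sum over the partition $\mathcal{Z}$, and use \Cref{thm: paper boat} to pull out $c_a(Z)$. The paper's proof is the same chain of equalities, written in a single line.
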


\Cref{tab:PBin A3} contains the values of the numbers $c_a(Z)$ (divided by $24$) for type $\widetilde{A}_3$.

\begin{proof}
Taking cardinalities on both sides of \eqref{eq: loweint as pbs} yields
\begin{equation}
		|\li{\lambda}| = \sum_{\mu \in \mathfrak{I}_\lambda} c_a(\mu) = \sum_{Z\in\mathcal{Z}} \sum_{\mu \in \mathfrak{I}_\lambda \cap Z} c_a(\mu) =  \sum_{Z\in\mathcal{Z}}c_a(Z) \sum_{\mu \in \mathfrak{I}_\lambda \cap Z} 1 = \sum_{Z\in\mathcal{Z}} c_a(Z) |\mathfrak{I}_\lambda \cap Z |,
\end{equation}
where the third equality follows from \Cref{thm: paper boat}.
\end{proof}

\section{On the sizes of the Paper Boats} \label{sec:SizesPB}
In this section, we continue focusing on affine type $A$ and on lower Bruhat intervals associated with elements of $W_\mathrm{aff}^+$, as in the previous section.  
In \Cref{222}, we provide a formula for the size of a Paper Boat within the zone $Z(2, \ldots, 2)$.  
In \Cref{lowerint}, we present a formula for the size of a Paper Boat expressed in terms of the sizes of lower intervals.  
An interesting implication of this formula is \Cref{cor: RecurrenceImpliesLESS}, that produces  a method to test whether a given function from the dominant weights to the integers matches with the cardinality of $\mathcal{I}_a(\lambda)$ for a fixed $a\in \mathcal{F}$.  
We then use this result in \Cref{sec: geoforA3} to compute the sizes of any lower interval coming from an element in $W_\mathrm{aff}^+$, in type $\widetilde{A}_3$, thus establishing \Cref{superconj} for this case.

\subsection{Zone \texorpdfstring{$Z(2,\ldots , 2)$}{Z(2,...,2)}}\label{222}

The goal of this section is to prove that the size of $PB_a^+(\lambda)$ is equal to $(n+1)!$ for all $\lambda\in Z(2,\ldots , 2)$ and all $a\in \mathcal{F}$. 
As we already pointed out in the introduction, this result was unexpected as the shape of (the alcoves of) the Paper Boat associated with different elements of $\mathcal{F}$ can vary but somehow, their cardinality does not. 
This discrepancy is illustrated in \Cref{fig:PPBB}.

\begin{figure}[hbt!]
\captionsetup[subfigure]{labelformat=simple}
\centering

\begin{subfigure}{0.25\textwidth}
\centering
\includegraphics[width=0.8\textwidth]{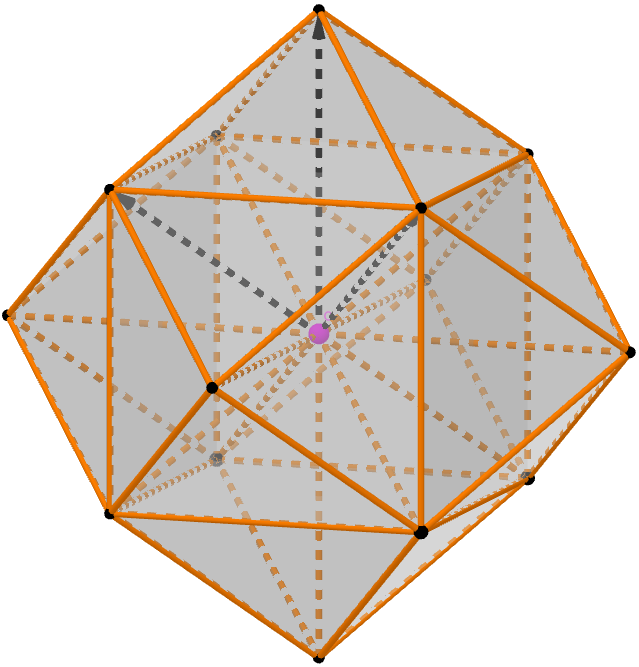}
\caption{The Paper Boat of $\theta_\mathrm{id}(\lambda)$ (whose alcove is the one with black edges).
}
\label{fig:PBAAA}
\end{subfigure}
\hfill
\begin{subfigure}{0.7\textwidth}
\centering
\includegraphics[width=\textwidth]{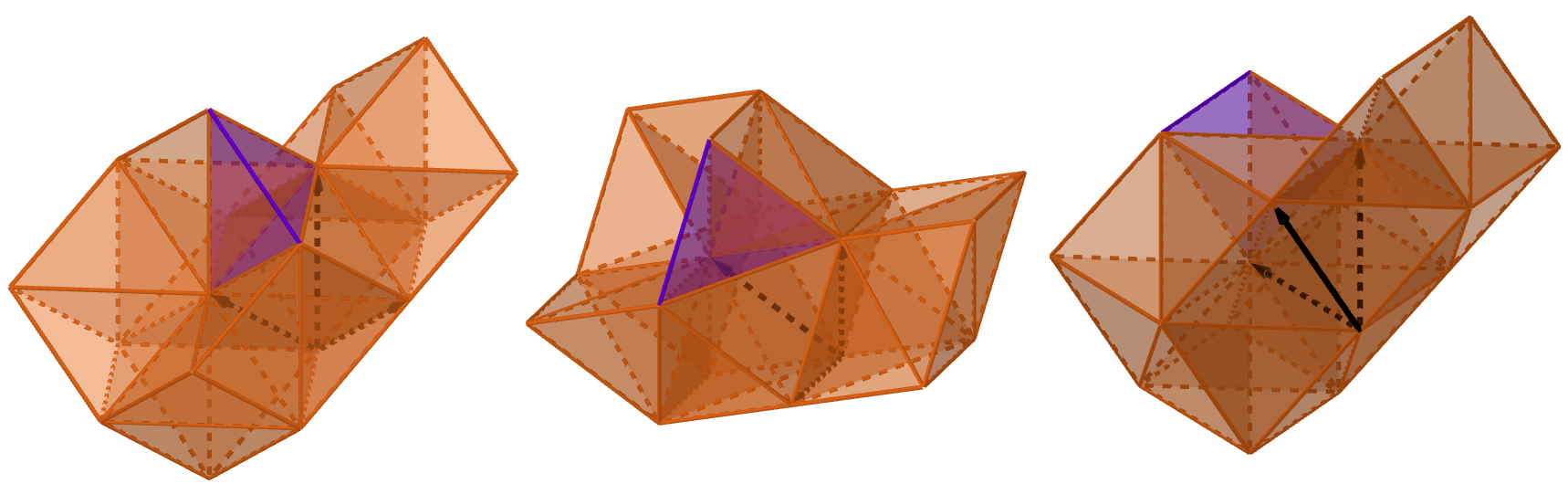}
\caption{Three distinct sights of the same Paper Boat.
The purple alcove represents $\theta_a(\lambda)$, while the alcove with black edges corresponds to $\theta_\mathrm{id}(\lambda)$ (which, for some $a$, does not belong to $PB_a(\lambda)$)}
\label{fig:PBBBB}
\end{subfigure}
\hfill
\caption{Two Paper Boats in $\widetilde{A}_3$ for the zone $Z(2,2,2)$.}
\label{fig:PPBB}
\end{figure}

\begin{prop} \label{prop:BarquitoGenerico}
Let $a\in \mathcal{F}$ and $\lambda \in Z(2, \ldots , 2)$.
Then, $c_a^+(\lambda) = |W_\mathrm{f}|=(n+1)!$.  
\end{prop}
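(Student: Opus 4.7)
The starting point is Theorem \ref{thm: paper boat}, which tells us that $c_a^+(\lambda)$ does not vary as $\lambda$ ranges over the zone $Z(2,\ldots,2)$. So the plan is to fix any sufficiently deep $\lambda$ (for instance $\lambda = N\rho$ with $N \gg 0$) and enumerate $PB_a^+(\lambda)$ explicitly. The inequality $\langle\lambda,\alpha_i\rangle \geq 2$ makes Lemma \ref{lemma Trasl is Domin} confine each vertex $\nu \in V(w)$ of any $w \in PB_a^+(\lambda)$ to the half-space $\langle\nu,\alpha_i\rangle \geq 1$; together with the alcove condition and the upper bound $w \leq \theta_a(\lambda)$, this localizes the paper boat into a bounded neighborhood of $\lambda$ strictly inside $C^+$.

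The next step is to transcribe the membership condition $w \in PB_a^+(\lambda)$ into constraints on vertices via Proposition \ref{prop: main tool}. Writing $V(\theta_a(\lambda)) = \lambda + V(w_0 a) = \{\lambda + v_j\}_{j=0}^n$ with $v_j \in \Pi^+$, the condition $w \leq \theta_a(\lambda)$ forces $x_j := (\lambda + v_j) - \nu_j \in \mathbb{Z}\Phi^+$ for the unique $\mathbb{Z}\Phi$-congruent vertex $\nu_j \in V(w)$. The condition $w \not\leq \theta_a(\lambda - \alpha_i)$ then forces, for each $i$, at least one $x_j$ to have vanishing $\alpha_i$-coefficient, mirroring the analysis in step (c) of the proof of Theorem \ref{thm: paper boat}. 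After translating by $-\lambda$, $PB_a^+(\lambda)$ is in bijection with the collection of alcoves whose vertex set is $\{v_j - x_j\}_{j=0}^n$ with $x_j \in \mathbb{Z}\Phi^+$ obeying the above constraints.

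The final, combinatorial step is to show this count is exactly $|W_\mathrm{f}| = (n+1)!$. A natural bijection to $W_\mathrm{f}$ could record which $x_j$ absorbs the forced zero $\alpha_i$-coefficient for each $i$, packaged into a permutation-style datum. The main obstacle is that the shape of $PB_a^+(\lambda)$ depends strongly on $a$ (as Figure \ref{fig:PPBB} illustrates), so the bijection must be uniform enough to give the same count for every $a$. An alternative path avoiding an explicit bijection is inclusion-exclusion: for $\lambda$ deep in the zone (so that $\lambda - \sum_{i\in S}\alpha_i$ stays dominant for every $S$), Lemma \ref{lem: meet pesos} implies
\[
c_a(\lambda) \;=\; \sum_{S \subseteq \{1,\ldots,n\}} (-1)^{|S|}\left|\mathcal{I}_a\!\left(\lambda - \sum_{i \in S}\alpha_i\right)\right|,
\]
so one could extract the constant $((n+1)!)^2$ from the top-degree behavior of $|\mathcal{I}_a(\lambda)|$ (in the spirit of Theorem \ref{thmD}), and then \eqref{eq: cardinalities} yields $c_a^+(\lambda) = (n+1)!$.
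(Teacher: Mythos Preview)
Your proposal sets up the right framework but does not deliver an actual proof: both paths you offer stop short of the finish line.

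\textbf{The bijection path.} You correctly localize the problem via Theorem~\ref{thm: paper boat} and Lemma~\ref{lemma Trasl is Domin}, and your reformulation of membership in $PB_a^+(\lambda)$ in terms of the vertex-offsets $x_j \in \mathbb{Z}\Phi^+$ is sound. But the sentence ``a natural bijection to $W_\mathrm{f}$ could record which $x_j$ absorbs the forced zero $\alpha_i$-coefficient for each $i$'' is only a hope, not a construction. The paper carries this out precisely: writing $\theta_a(\lambda) = t_\mu w$, it defines for each $v \in W_\mathrm{f}$ the element $\kappa_v = -\bigwedge_j v_j$ where $v_j = w(-\varpi_j) - v(-\varpi_j)$, and sets $\phi(v) = t_{\mu-\kappa_v} v$. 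The proof then verifies (a) $\phi(v) \in W_\mathrm{aff}^+$, (b) $\phi(v) \leq \theta_a(\lambda)$, (c) $\phi(v) \not\leq \theta_a(\lambda - \alpha)$ for each $\alpha \in \Delta$, and finally surjectivity by showing that any $t_{\mu - \gamma} v \in PB_a^+(\lambda)$ forces $\gamma = \kappa_v$. None of these verifications is automatic; each uses Proposition~\ref{prop: main tool} and the precise definition of $\kappa_v$ via the meet operator. Your ``permutation-style datum'' suggestion does not supply this, and in fact the paper's bijection is indexed by the \emph{finite-part} $v$ of the alcove rather than by any matching of zeros to indices.

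\textbf{The inclusion--exclusion path is circular.} You propose to extract the constant $((n+1)!)^2$ from the top-degree behaviour of $|\mathcal{I}_a(\lambda)|$ ``in the spirit of Theorem~\ref{thmD}''. But look at the proof of Theorem~\ref{thm:lattice_count} in Section~\ref{sec: latticepoints}: the identification of the top-degree coefficient as $(n+1)!^2$ is obtained \emph{by invoking Proposition~\ref{prop:BarquitoGenerico}} (``By \Cref{prop:BarquitoGenerico}, it follows that $c_a(\boldsymbol{2}) = (n+1)!^2$''). So this route assumes what you are trying to prove. To make it non-circular you would need an independent computation of the leading term of $|\mathcal{I}_a(\lambda)|$ valid for every $a \in \mathcal{F}$, and the paper provides no such tool prior to this proposition.
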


\begin{proof}
We will construct a bijection between $W_\mathrm{f}$ and $PB_a^+(\lambda)$.
Let $\mu \in \mathbb{Z}\Phi$ and $w \in W_\mathrm{f}$ be such that $\theta_a(\lambda) = t_\mu w \in \mathbb{Z}\Phi \rtimes W_\mathrm{f} = W_\mathrm{aff}$. 
Let $v\in W_\mathrm{f}$.
For $0\leq i\leq n$ and $\alpha\in\Delta$, we write
\begin{equation}
v_i= w(-\varpi_i) - v(-\varpi_i)\in\bbZ\Phi \quad \mbox{and} \quad \eta_{\alpha,v} = \min_{0\leq i \leq n} \{ \langle v_i, \varpi_\alpha \rangle \}.
\end{equation}

Recall the meet \eqref{eq: meet} and join \eqref{eq: join} operators.
We define
\begin{equation}
\displaystyle \kappa_v = (-v_0)\vee (-v_1) \vee \cdots \vee (-v_n) 
= - (v_0\wedge v_1 \wedge \cdots \wedge v_n) 
= -\sum_{\alpha \in \Delta} \eta_{\alpha , v} \alpha.
\end{equation}
By definition of $\kappa_v$, we have
\begin{equation} \label{eq:coeffLessEqual0 domi}
0\leq v_i +\kappa_v, \qquad \forall \, 0\leq i\leq n
\end{equation}
in the dominance order.
As $v_0=\mathbf{0},$ Equation \eqref{eq:coeffLessEqual0 domi} implies that $\kappa_v\geq \mathbf{0}$, that is, $\kappa_v\in \bbZ_{\geq 0}\Phi^+$.
This implies that
\begin{equation}\label{eq: kappa with Delta_v}
\kappa_v = -\sum_{\alpha \in \Delta_v} \eta_{\alpha , v} \alpha, \quad \mbox{with} \quad \Delta_v = \{ \alpha \in \Delta \mid \langle v_i, \varpi_\alpha \rangle <0 \text{ for some } 0\leq i \leq n \}.
\end{equation}

Consider $\phi:  W_\mathrm{f}  \longrightarrow  PB_a^+(\lambda)$ given by $\phi(v)=t_{\mu - \kappa_v} v$.
To prove the proposition, it is enough to prove that $\phi$ is a bijection.
We first show that $\phi$ is well defined, \ie , $t_{\mu - \kappa_v} v\in PB_a^+(\lambda)$.
By definition of $PB_a^+(\lambda) $, we have to prove the following three statements:
\begin{enumerate}[(a)]
\item\label{item: PB222a} $t_{\mu - \kappa_v} v \in W_\mathrm{aff}^+$,
\item\label{item: PB222b} $t_{\mu - \kappa_v} v \leq \theta_a(\lambda)=t_\mu w$,
\item\label{item: PB222c} $t_{\mu - \kappa_v} v \not\leq \theta_a(\lambda-\alpha)$, for each simple root $\alpha \in \Delta$ (recall \Cref{deltaphi}).
\end{enumerate}

The proof of the above statements is as follows.
\begin{enumerate}[(a)]
\item
Suppose that $t_{\mu - \kappa_v} v \not\in W_\mathrm{aff}^+$.
Let $A$ be the alcove associated to $t_{\mu - \kappa_v} v$. 

On the one hand,  since $\lambda \in Z(2,\ldots, 2)$, we have that $\lr{t_\mu w(-\varpi_i)}{\beta}\geq 2$ for all $0 \leq i\leq n$ and all $\beta\in \Delta$.

On the other hand, since $A$ lies outside $C^+$,
there exists a vertex of $A$ that is not dominant. 
In other words, there exists   $\alpha\in \Delta$ and $0\leq i \leq n$ such that $\lr{t_{\mu - \kappa_v} v(-\varpi_i)}{\alpha} = k < 0$.
Therefore, every vertex $x$ of  $A$ satisfies $k-1 \leq \langle x, \alpha \rangle \leq k+1$.
We conclude that $\lr{t_{\mu - \kappa_v} v(-\varpi_i)}{\alpha} \leq 0$ for all $0\leq i \leq n$.

By combining these two facts for the simple root $\alpha$ and for $i=0$, we get
\begin{equation}
2 \leq \lr{\mu}{\alpha}  = \lr{\mu-\kappa_v}{\alpha} + \lr{\kappa_v}{\alpha}  \leq \lr{\kappa_v}{\alpha}. 
\end{equation}
By \eqref{eq: kappa with Delta_v}, this implies that $\eta_{\alpha,v}\neq 0$ (recall that $\lr{\alpha}{\beta} \leq 0$ for all $\beta \in \Delta$ such that  $\beta \neq \alpha$), and  this can occur only if $\alpha\in \Delta_v$. 
By definition of $\eta_{\alpha,v}$ there exists some $j\in\{1,\ldots,n\}$  such that $\eta_{\alpha,v}=\langle v_j, \varpi_\alpha \rangle$, so that  
\begin{equation}\label{eq:coeffEqual0}
\lr{v_j+\kappa_v}{\varpi_\alpha} =0.
\end{equation} 

A combination of \eqref{eq:coeffLessEqual0 domi}  and  \eqref{eq:coeffEqual0}  yields $ \lr{v_j+\kappa_v}{\alpha} \leq 0$.
Recall that $v_j=w(-\varpi_j) - v(-\varpi_j)$.
We have $
2   \leq \lr{t_\mu w(-\varpi_j)}{\alpha}  = \lr{v_j+\kappa_v}{\alpha} + \lr{ t_{\mu-\kappa_v}v(-\varpi_j)}{\alpha}  \leq 0$.

This contradiction proves \ref{item: PB222a}. 

\item By part \ref{item: PB222a} we know that $t_{\mu-\kappa_v}v\in W_\mathrm{aff}^+$. 
Applying \Cref{prop: main tool}, we see that \ref{item: PB222b} is equivalent to \eqref{eq:coeffLessEqual0 domi}.

\item Let $\alpha \in \Delta$.
We first note that since $\lambda$ is in the zone $Z(2,\ldots,2)$, then $\lambda-\alpha\in\Lambda^+$.
Therefore the equality $\theta_a(\lambda)=t_{\mu} w$ and \Cref{lem: amigos algebraic def} imply that $\theta_a(\lambda-\alpha)=t_{\mu - \alpha} w$.
We  suppose that  $t_{\mu - \kappa_v} v \leq t_{\mu - \alpha} w$ and will derive a contradiction.
By part \ref{item: PB222a} and \Cref{prop: main tool}, we get
\begin{equation} \label{eq:SIFuera}
\mathbf{0} \leq \kappa_v + v_i - \alpha, \qquad \forall \, i\in \{0,1,\ldots , n\}
\end{equation}
in the dominance order.
In particular, $i=0$ yields  $\alpha \leq \kappa_v$, so $-\eta_{\alpha,v}\geq 1$, and in particular $\eta_{\alpha,v}\neq 0,$
thus $\alpha \in \Delta_v$. As before \eqref{eq:coeffEqual0}  holds for some $j\in \{1,\ldots , n\}$.
By pairing  \eqref{eq:SIFuera} for $i=j$ with $\varpi_\alpha$, we obtain $0\leq -1$.
This contradiction proves \ref{item: PB222c}. 
\end{enumerate}

We have shown that $\phi$ is well defined. 

The injectivity of the map $\phi$ follows directly from the definition of semidirect product.
Let us now prove that $\phi$ is surjective.
Let $t_{\mu- \gamma}v \in PB_a^+(\lambda)$ for some $\gamma \in \mathbb{Z}\Phi$ and $v\in W_\mathrm{f}$.
We must show that $\gamma =\kappa_v$.

We begin by noticing that, since $t_{\mu- \gamma}v\leq t_{\mu}w$, \Cref{prop: main tool} implies that $-v_i\leq \gamma$ for all $0\leq i \leq n$ in the dominance order.
Therefore, $(-v_0) \vee \cdots \vee(-v_n) = \kappa_v \leq \gamma$.
To prove the other inequality it is enough to show the following.
\begin{enumerate}[(I)]
\item \label{itemI} $\Delta_v  = \operatorname{Supp}(\gamma)$, where $\operatorname{Supp}(\gamma) = \{\alpha \in \Delta \mid \lr{\gamma}{\varpi_\alpha} \neq 0  \}$.   
\item \label{itemII} $\lr{\gamma}{\varpi_\alpha}\leq \lr{\kappa_{v}}{\varpi_\alpha} = -\eta_{\alpha ,v}$ for all $\alpha\in \Delta_v$. 
\end{enumerate}

We first prove \Cref{itemI}.
Since $0\leq \kappa_v \leq \gamma$ we have $\Delta_v  \subset \operatorname{Supp}(\gamma)$.
On the other hand, let us suppose that $\alpha \in \operatorname{Supp}(\gamma) \setminus \Delta_v$.
Then, for all $i\in \{0,1,,\ldots ,n\}$  we have
\begin{equation}\label{eq:condition_not}
\langle \gamma - \alpha + v_i, \varpi_\alpha \rangle  
= \langle \gamma, \varpi_\alpha \rangle - 1 + \langle v_i, \varpi_\alpha \rangle 
\geq 1 - 1 + \langle v_i, \varpi_\alpha \rangle 
\geq 0,
\end{equation} 
where the first inequality follows from the fact that $\gamma \geq 0$.
We notice that \eqref{eq:coeffLessEqual0 domi} implies that  $0 \leq  \gamma +v_i $ for all $i\in \{0,1,\ldots ,n\}$. 
Let $\beta \in \Delta$ with $\beta \neq \alpha$. Then we have
\begin{equation}
\langle \gamma - \alpha + v_i, \varpi_\beta \rangle   = \langle \gamma  + v_i, \varpi_\beta \rangle \geq 0
\end{equation}
It follows that  $\mathbf{0} \leq  \gamma-\alpha +v_i$ for all $i\in \{0,1,,\ldots ,n\}$. 
Then \Cref{prop: main tool} implies that $t_{\mu-\gamma}v \leq t_{\mu-\alpha}w$.
This contradicts the fact that $t_{\mu- \gamma}v \in PB_a^+(\lambda)$.   
Thus, we conclude that $\operatorname{Supp}(\gamma) = \Delta_v$.

Finally, we prove \Cref{itemII}. 
Suppose that  $\langle \gamma, \varpi_\alpha \rangle > -\eta_{\alpha,v}$ for some $\alpha \in \Delta_v$.  
Let $0\leq i\leq n$ and note that
\begin{equation}
\langle \gamma + v_i, \varpi_\alpha \rangle
> -\eta_{\alpha , v}  + \lr{v_i}{\varpi_\alpha} \geq  -\eta_{\alpha , v}  + \eta_{\alpha , v} =0.
\end{equation}
It follows that $\langle \gamma - \alpha + v_i, \varpi_\alpha \rangle \geq 0$.
This is the same inequality as the one obtained in \eqref{eq:condition_not}.
Therefore, we reach the same contradiction and this proves \Cref{itemII}. 

We have proved that $\gamma = \kappa_v$.
Therefore, $\phi$ is bijective as we wanted to show.
\end{proof}

\begin{example}
In \Cref{fig:DibFede}, we graphically illustrate the bijection $\phi$ occurring in the proof of \Cref{prop:BarquitoGenerico} for type $\tilde{A}_2$.
Let $a=s_0$.
In \Cref{fig:Fede} (resp. \Cref{fig:FedeAA}), the green triangles represent the set $PB_a^+(2\varpi_1+2\varpi_2)$ (resp. $PB_a^+(3\varpi_1+2\varpi_2)$) and the corresponding element $\theta_a(2\varpi_1 + 2\varpi_2)$ (resp. $\theta_a(3\varpi_1 + 2\varpi_2)$) is represented by the green triangle marked with $\circled{0}$ (resp. $\circled{3}$).
\begin{figure}[ht]
\captionsetup[subfigure]{labelformat=simple}
\centering

\begin{subfigure}{0.4\textwidth}
\centering
\includestandalone[width=\textwidth]{images/DibuFede}
\caption{$\lambda = 2\varpi_1+2\varpi_2$.}
\label{fig:Fede}
\end{subfigure}
\hfill
\begin{subfigure}{0.4\textwidth}
\centering
\includestandalone[width=\textwidth]{images/DibFedeAA}
\caption{$\lambda = 3\varpi_1+2\varpi_2$.}
\label{fig:FedeAA}
\end{subfigure}
\caption{The bijection $\phi: W_\mathrm{f} \longrightarrow PB_a^+(\lambda)$, for $\lambda\in Z(2,\ldots,2)$.
The set $PB_a^+(\lambda)$ is illustrated in green.}
\label{fig:DibFede}
\end{figure}

The elements of $W_{\mathrm{f}}$ are labeled as follows:
\begin{equation*}
\circled{0}=e, \qquad \circled{1}=s_1   , \qquad \circled{2}=s_2   , \qquad \circled{3}= s_1s_2  , \qquad \circled{4}= s_2s_1  , \qquad \circled{5}= s_1s_2s_1  .
\end{equation*}

The bijection in the proof of \Cref{prop:BarquitoGenerico} goes as follows. 
For $v\in W_{\mathrm{f} }$ we first translate the corresponding alcove by $\mu=3\varpi_1+3\varpi_2$.
Then we translate by $-\kappa_v$ to obtain the green triangle in $PB_a^+(\lambda)$. 
\end{example}

\subsection{Paper boats in terms of lower intervals}\label{lowerint}
In this section we use the principle of inclusion-exclusion to obtain a formula for the size of a Paper Boat in terms of lower Bruhat intervals. 
Recall the weight poset $\Lambda$ and its join \eqref{eq: join} and meet \eqref{eq: meet} operators.
For $J\subset \Phi$, we define $\displaystyle \vee_J = \bigvee_{\alpha \in J} \alpha$.

\begin{prop}\label{thm: recurrence}
Let $a\in \mathcal{F}$ and $\lambda \in \Lambda^+$.
Then 
\begin{equation}\label{eq: recurrencia}
c_a(\lambda) = \sum_{J\subset \Phi^+(\lambda)} (-1)^{|J|} \, \left|\,\li{\lambda -\vee_J} \,\right|.
\end{equation}
\end{prop}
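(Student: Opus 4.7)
The identity \eqref{eq: recurrencia} will follow from a direct application of the principle of inclusion–exclusion once the intersections of the intervals $\li{\lambda-\alpha}$ are identified with a single lower interval. The first step is to take cardinalities in the definition
\[
PB_a(\lambda)=\li{\lambda}\setminus\bigcup_{\alpha\in\Phi^+(\lambda)}\li{\lambda-\alpha},
\]
and note that each $\li{\lambda-\alpha}$ is contained in $\li{\lambda}$ (by \Cref{lem: intertwining orders}, since $\lambda-\alpha\leq\lambda$ and all these weights lie in the same coset mod $\bbZ\Phi$). Standard inclusion–exclusion then yields
\[
c_a(\lambda)=|\li{\lambda}|+\sum_{\emptyset\neq J\subset\Phi^+(\lambda)}(-1)^{|J|}\Bigl|\,\bigcap_{\alpha\in J}\li{\lambda-\alpha}\,\Bigr|.
\]

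The heart of the argument is to show that $\bigcap_{\alpha\in J}\li{\lambda-\alpha}=\li{\lambda-\vee_J}$ for every non-empty $J\subset\Phi^+(\lambda)$. For each $\alpha\in\Phi^+(\lambda)$ the weight $\lambda-\alpha$ is dominant by the very definition of $\Phi^+(\lambda)$, and any two such weights differ by an element of $\bbZ\Phi$, so \Cref{lem: meet pesos} applies pairwise. To iterate it, I will invoke Stembridge's result (recalled just before \Cref{lem: meet pesos}) that the meet $\wedge^+$ on each component $\Lambda_i^+$ is the restriction of the ambient meet $\wedge$ on $\Lambda_i$; in particular, the meet of finitely many dominant weights in a common coset is again dominant. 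An induction on $|J|$ then gives
\[
\bigcap_{\alpha\in J}\li{\lambda-\alpha}=\li{\,\textstyle\bigwedge_{\alpha\in J}(\lambda-\alpha)\,}.
\]

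The last step is a purely coordinate-wise identity. Writing everything in the simple-root basis and using the explicit formulas \eqref{eq: meet}–\eqref{eq: join}, the minimum of the coefficients of $\lambda-\alpha$ over $\alpha\in J$ equals the coefficients of $\lambda$ minus the maximum of the coefficients of $\alpha$ over $\alpha\in J$, giving
\[
\bigwedge_{\alpha\in J}(\lambda-\alpha)=\lambda-\bigvee_{\alpha\in J}\alpha=\lambda-\vee_J.
\]
Substituting this back, and incorporating the initial term $|\li{\lambda}|$ as the $J=\emptyset$ summand with the convention $\vee_\emptyset=\mathbf{0}$, produces \eqref{eq: recurrencia}. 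I do not anticipate any real obstacle: the only mildly delicate point is ensuring that every $\lambda-\vee_J$ remains in $\Lambda^+$ so that $\theta_a(\lambda-\vee_J)$ and \Cref{lem: meet pesos} are available at each stage of the induction, and this is precisely what the equality $\wedge^+=\wedge$ guarantees.
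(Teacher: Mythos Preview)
Your proposal is correct and follows essentially the same approach as the paper: inclusion--exclusion combined with the identification $\bigcap_{\alpha\in J}\li{\lambda-\alpha}=\li{\lambda-\vee_J}$, which in turn rests on \Cref{lem: meet pesos}, the equality $\wedge^+=\wedge|_{\Lambda_i^+}$, and the coordinate identity $\bigwedge_{\alpha\in J}(\lambda-\alpha)=\lambda-\vee_J$. The paper packages the intersection step as a separate lemma (\Cref{lem: intersection is meet}) rather than an explicit induction, but the content is the same.
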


In the proof of \Cref{thm: recurrence}, we will make use of the following lemma.

\begin{lemma}\label{lem: intersection is meet}
Let $\lambda\in\Lambda^+$, $a\in\mathcal{F}$ and $\emptyset\neq J\subset\Phi^+(\lambda)$.
Then,
\begin{equation}\label{eq: intersection is meet}
\bigcap_{\alpha \in J} \li{\lambda -\alpha}= \mathcal{I}_a\left({\bigwedge_{\alpha\in J}}(\lambda -\alpha)\right) = \li{\lambda -\vee_J}.
\end{equation}
\end{lemma}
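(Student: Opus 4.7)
The plan is to deduce this lemma by iterating the two-element version \Cref{lem: meet pesos}, combined with the elementary order-theoretic observation that translation by $\lambda$ composed with negation swaps $\wedge$ and $\vee$ on the coweight lattice.

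First I would dispose of the second equality $\bigwedge_{\alpha\in J}(\lambda-\alpha)=\lambda-\vee_J$. This is purely lattice-theoretic: from \eqref{eq: meet} and \eqref{eq: join}, the map $x\mapsto \lambda - x$ is an order-reversing bijection on $\Lambda_i$ (where $i$ is the coset of $\lambda$ modulo $\bbZ\Phi$), so it sends joins to meets. Since all elements of $J$ are positive roots lying in $\bbZ\Phi$, one can compute $\vee_J$ inside the lattice $(\bbZ\Phi,\leq)$, and then $\lambda-\vee_J$ makes sense as a coweight in $\Lambda_i$.

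For the first equality, I would proceed by induction on $|J|$. The base case $|J|=1$ is trivial. For the inductive step, fix $\alpha_0\in J$, set $J'=J\setminus\{\alpha_0\}$, and write
\begin{equation*}
\bigcap_{\alpha\in J}\li{\lambda-\alpha}=\li{\lambda-\alpha_0}\cap\bigcap_{\alpha\in J'}\li{\lambda-\alpha}=\li{\lambda-\alpha_0}\cap \li{\lambda-\vee_{J'}},
\end{equation*}
using the induction hypothesis for the second equality. To apply \Cref{lem: meet pesos} to this intersection, I must verify that both $\lambda-\alpha_0$ and $\lambda-\vee_{J'}$ are dominant weights in the same coset $\Lambda_i^+$. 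They are manifestly in $\Lambda_i$, so the only thing to check is dominance of $\lambda-\vee_{J'}$, and this is ensured inductively thanks to Stembridge's theorem that the meet of dominant weights is dominant (that is, $\wedge^+$ is the restriction of $\wedge$ to $\Lambda_i^+$). Once this is granted, \Cref{lem: meet pesos} converts the intersection into $\li{(\lambda-\alpha_0)\wedge(\lambda-\vee_{J'})}$, and the first paragraph's observation identifies this with $\li{\lambda-(\alpha_0\vee\vee_{J'})}=\li{\lambda-\vee_J}$, closing the induction.

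The only step requiring genuine care is the dominance check feeding the induction: one must know at each stage that the partial meet $\lambda-\vee_{J'}$ remains dominant. I expect this to be the main technical obstacle, but it is resolved cleanly by the Stembridge restriction property for $\wedge$, provided the base-case input weights $\lambda-\alpha$ ($\alpha\in\Phi^+(\lambda)$) are dominant, which is exactly the defining condition of $\Phi^+(\lambda)$ recalled in \eqref{phi+lambda}. No alcove-geometric input beyond \Cref{lem: meet pesos} (and hence \Cref{prop: main tool}) is required.
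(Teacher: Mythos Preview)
Your proof is correct and follows essentially the same route as the paper: both establish the second equality via the componentwise min/max description of $\wedge$ and $\vee$ (your order-reversing observation is equivalent), verify dominance of $\lambda-\vee_J$ via Stembridge's restriction property, and then reduce the first equality to \Cref{lem: meet pesos}. The paper simply says ``direct application of \Cref{lem: meet pesos}'' where you spell out the induction on $|J|$ explicitly; one minor imprecision is that $x\mapsto\lambda-x$ is not a bijection of $\Lambda_i$ to itself but rather from $\bbZ\Phi$ to $\Lambda_i$, though your subsequent sentence makes clear you understand this.
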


\begin{proof}
We note that by Equations \eqref{eq: join}, \eqref{eq: meet} and the fact that $\wedge^+$ is the restriction of $\wedge$ to the dominant weights, we have
\begin{equation}\label{eq: meetplus}
{\bigwedge_{\alpha\in J}}^+ \left(\lambda-\alpha\right) =  \bigwedge_{\alpha\in J} \left(\lambda-\alpha\right) = \lambda - \vee_J.
\end{equation}
Therefore, $\bigwedge_{\alpha\in J} \left(\lambda-\alpha\right) = \lambda - \vee_J$ is a dominant weight, so that \eqref{eq: intersection is meet} makes sense. 
The first equality in \Cref{eq: intersection is meet} is a direct application of \Cref{lem: meet pesos}, whereas the second equality follows from \eqref{eq: meetplus}.
\end{proof}

\begin{proof}[Proof of \Cref{thm: recurrence}]
Note that
\begin{equation}\label{eq: recu1}
\displaystyle  - \left|  \bigcup_{\alpha \in \Phi^+(\lambda)} \li{\lambda-\alpha}  \right|     \displaystyle = \sum_{\emptyset \neq J\subset \Phi^+(\lambda)}  (-1)^{|J|} \left| \bigcap_{\alpha \in J} \li{\lambda-\alpha }  \right|  
\displaystyle =  \sum_{\emptyset \neq J\subset \Phi^+(\lambda)}  (-1)^{|J|} \left| \li{\lambda - \vee_J}  \right|,
\end{equation}
where the first equality follows from the inclusion-exclusion principle and the second one is implied by \Cref{lem: intersection is meet}.
By the definition of $c_a(\lambda)$, \eqref{eq: recu1} implies \eqref{eq: recurrencia}. 
\end{proof}

\begin{example}
Using \Cref{thm: recurrence}, we compute $ c_a(\lambda) $ for all $ \lambda \in \Lambda^+ $ in the case of the affine Weyl group of type $ A_3 $.
Since the set $ \mathcal{Z} $ (\Cref{def: zones}) is a partition of  $ \Lambda^+ $ and the values $ c_a(\lambda) $ remain constant within each zone $ Z \in \mathcal{Z} $ (\Cref{thm: paper boat}), it suffices to compute $ c_a(\lambda) $ for a single representative $ \lambda_Z $ per zone $ Z $.
By definition of $ c_a(\lambda) $, we must also compute the set $ \Phi^+(\lambda) $.
As before, it is sufficient to determine $ \Phi^+(\lambda_Z) $ for each $ Z \in \mathcal{Z} $, since the sets $ \Phi^+(\lambda) $ are constant within zones (\Cref{deltaphi}).
For the zone $Z=Z(i_1, i_2, i_3)$, we set $\lambda_Z=(i_1, i_2, i_3)= i_1 \varpi_1 + i_2 \varpi_2 + i_3 \varpi_3$.
The sets $ \Phi^+(\lambda) $ are presented in the columns of \Cref{tab:Phiplusin A3}, while the values $ c_a^+(\lambda) $ are displayed in \Cref{tab:PBin A3}.
Finally, we emphasize that the values of $ c_a(\lambda) $ are obtained by multiplying $ c_a^+(\lambda) $ by $ |W_{\mathrm{f}}| = 24 $ (\Cref{cor: reduction to C+ Paper Boat}).

\begin{table}[h]
\centering
\resizebox{15cm}{!}{
\begin{tabular}{|c||c|c|c|c|c|c|c|c|c|c|c|c|c|c|c|c|c|c|c|c|c|c|c|c|c|c|c|} \hline
$i_1$    & 0 &0 & 0&0 & 0 & 0 & 0& 0& 0& 1 & 1 &1 & 1& 1& 1 & 1 &1 & 1& 2& 2 & 2 & 2& 2& 2& 2& 2 &2 \\ \hline
$i_2$ & 0 &0 &0 &1 & 1 & 1 & 2& 2& 2& 0 & 0 &0 &1 & 1 & 1 & 2 & 2& 2& 0& 0 &0  &1 &1 &1 &2 & 2 &2 \\ \hline
$i_3$    & 0 &1 &2 &0 & 1 & 2 & 0 &1 &2 & 0 & 1 &2 &0 & 1& 2 & 0 &1 &2 & 0& 1 & 2 &0 & 1& 2& 0& 1 &2 \\ \hline \hline
$\alpha_1$     &  & & & &  &  & & & &  &  & & & &  &  & & & $\bullet$ & $\bullet$ & $\bullet$ & $\bullet$ &$\bullet$ &$\bullet$ &$\bullet$ &$\bullet$  &$\bullet$  \\ \hline
$\alpha_2$     &  & & & &  &  & $\bullet$ & $\bullet$& $\bullet$&  &  & & & &  & $\bullet$ &$\bullet$ &$\bullet$ & &  &  & & & &$\bullet$ & $\bullet$ &$\bullet$ \\ \hline
$\alpha_3$     &  & & $\bullet$ & &  & $\bullet$ & & & $\bullet$ &  &  & $\bullet$ & & & $\bullet$ &  & &$\bullet$ & &  & $\bullet$ & & &$\bullet$ & &  & $\bullet$ \\ \hline
$\alpha_{12}$  &  & & & &  &  & & & &  &  & & $\bullet$ &$\bullet$  & $\bullet$  &  & & & &  &  & & & & &  & \\ \hline 
$\alpha_{23}$  &  & & & &$\bullet$  &  & & & &  &  & & & $\bullet$&  &  & & & &  &  & &$\bullet$ & & &  & \\ \hline
$\alpha_{13}$ &  & & & &  &  & & & &  &$\bullet$   & & & &  &  & & & &  &  & & & & &  & \\ \hline   
\end{tabular}
}
\caption{The sets $\Phi^+(i_1,i_2,i_3)$.}
\label{tab:Phiplusin A3}
\end{table}

\begin{table}[h]
\centering
\resizebox{15cm}{!}{
\begin{tabular}{|c||c|c|c|c|c|c|c|c|c|c|c|c|c|c|c|c|c|c|c|c|c|c|c|c|c|c|c|}  \hline
$i_1$    & 0 &0 & 0&0 & 0 & 0 & 0& 0& 0& 1 & 1 &1 & 1& 1& 1 & 1 &1 & 1& 2& 2 & 2 & 2& 2& 2& 2& 2 &2 \\ \hline
$i_2$ & 0 &0 &0 &1 & 1 & 1 & 2& 2& 2& 0 & 0 &0 &1 & 1 & 1 & 2 & 2& 2& 0& 0 &0  &1 &1 &1 &2 & 2 &2 \\ \hline
$i_3$    & 0 &1 &2 &0 & 1 & 2 & 0 &1 &2 & 0 & 1 &2 &0 & 1& 2 & 0 &1 &2 & 0& 1 & 2 &0 & 1& 2& 0& 1 &2 \\ \hline \hline
$a_1$   &  1 & 4 & 4 & 6& 12 & 12 & 6 & 12 & 12& 4 & 12 & 12 & 12 & 24 & 24 & 12 & 24 & 24 & 4& 12& 12 & 12 & 24& 24 & 12 & 24 & 24  \\ \hline
$a_2$  & 2 &7&6&10&17&16&8&15&14&7&18&16&17&30&28&15&28&26&6&16&14&16&28&26&14&26&24\\ \hline
$a_3$  & 3&10&9&13&22&22&9&18&18&9&22&21&18&33&33&15&30&30&6&16&15&15&27&27&12&24&24\\ \hline
$a_4$  &3&9&6&13&18&15&9&15&12&10&22&16&22&33&27&18&30&24&9&21&15&22&33&27&18&30&24\\ \hline
$a_5$ &5&14&10&18&24&22&9&16&14&14&29&24&24&37&34&16&29&26&10&24&20&22&34&32&14&26&24\\ \hline
$a_6$ & 8 &20&12 &26&31&26&14&23& 18 &20& 36& 26&31& 44& 36& 23&38&30& 12&  26&18& 26& 36& 30&18& 30& 24 \\ \hline
\end{tabular}
}
\caption{Values of $c_a^+(i_1,i_2,i_3)$.}
\label{tab:PBin A3}
\end{table}
\end{example}

We obtain the following corollary. 
\begin{cor} \label{cor: RecurrenceImpliesLESS}
Let $ a \in \mathcal{F} $ and $ F_a: \Lambda^+ \rightarrow \mathbb{R} $ be a function. Suppose that for all $ \lambda \in \Lambda^+ $,
\begin{equation} \label{eq: coincide recurrence}
c_a(\lambda) = \sum_{J \subset \Phi^+(\lambda)} (-1)^{|J|} F_a(\lambda - \vee_J).
\end{equation}
Then, $ F_a(\lambda) = |\mathcal{I}_a(\lambda)| $ for all $ \lambda \in \Lambda^+ $.
\end{cor}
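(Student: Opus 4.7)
The plan is to prove the statement by induction on $\lambda \in \Lambda^+$ with respect to the dominance order. The key observation is that $|\mathcal{I}_a(-)|$ satisfies the same recurrence as $F_a$: namely, by \Cref{thm: recurrence} we have
\begin{equation*}
c_a(\lambda) = \sum_{J\subset \Phi^+(\lambda)} (-1)^{|J|}\, |\mathcal{I}_a(\lambda -\vee_J)|,
\end{equation*}
which, combined with the hypothesis \eqref{eq: coincide recurrence}, will force $F_a$ and $|\mathcal{I}_a(-)|$ to agree.

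For the base case, consider $\lambda\in\Lambda^+$ minimal in the dominance order on $\Lambda^+$. Such a $\lambda$ satisfies $\Phi^+(\lambda)=\emptyset$, since there is no $\mu\lessdot\lambda$; equivalently (by the remark following the definition of $\Phi^+(\lambda)$), $\lambda=\varpi_i$ for some $0\leq i\leq n$. Then the sum on the right of \eqref{eq: coincide recurrence} reduces to the single term $J=\emptyset$, giving $c_a(\lambda)=F_a(\lambda)$. On the other hand, for such $\lambda$ we have $PB_a(\lambda)=\mathcal{I}_a(\lambda)$, hence $c_a(\lambda)=|\mathcal{I}_a(\lambda)|$. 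Therefore $F_a(\lambda)=|\mathcal{I}_a(\lambda)|$ as required.

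For the inductive step, fix $\lambda\in\Lambda^+$ and assume $F_a(\mu)=|\mathcal{I}_a(\mu)|$ for every $\mu\in\Lambda^+$ with $\mu<\lambda$. Separating the term $J=\emptyset$ in both \eqref{eq: coincide recurrence} and \Cref{thm: recurrence}, we obtain
\begin{equation*}
F_a(\lambda) + \sum_{\emptyset\neq J\subset \Phi^+(\lambda)} (-1)^{|J|} F_a(\lambda -\vee_J) \;=\; c_a(\lambda) \;=\; |\mathcal{I}_a(\lambda)| + \sum_{\emptyset\neq J\subset \Phi^+(\lambda)} (-1)^{|J|} |\mathcal{I}_a(\lambda -\vee_J)|.
\end{equation*}
For every nonempty $J\subset\Phi^+(\lambda)$, \Cref{lem: intersection is meet} (together with the fact that $\vee_J>\mathbf{0}$) guarantees that $\lambda-\vee_J\in\Lambda^+$ and $\lambda-\vee_J<\lambda$ in the dominance order. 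Thus the inductive hypothesis applies to each such term, so the two sums cancel and we conclude $F_a(\lambda)=|\mathcal{I}_a(\lambda)|$.

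There is essentially no substantive obstacle here: the argument is a straightforward telescoping induction, and the only points that require care are (i) identifying the minimal elements of $(\Lambda^+,\leq)$ via the condition $\Phi^+(\lambda)=\emptyset$, so that the base case is nontrivially verified by the equality $PB_a(\lambda)=\mathcal{I}_a(\lambda)$, and (ii) noting that $\lambda-\vee_J$ is dominant and strictly dominated by $\lambda$ whenever $J\neq\emptyset$, which is exactly the content already used in the proof of \Cref{thm: recurrence}.
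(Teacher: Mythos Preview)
Your proof is correct and follows essentially the same approach as the paper's: induction on the dominance order, with base case the minimal elements $\varpi_0,\dots,\varpi_n$ (where $\Phi^+(\lambda)=\emptyset$ forces $c_a(\lambda)=F_a(\lambda)=|\mathcal{I}_a(\lambda)|$), and inductive step obtained by separating the $J=\emptyset$ term in both \eqref{eq: coincide recurrence} and \Cref{thm: recurrence} and cancelling the remaining sums via the inductive hypothesis. Your justification that $\lambda-\vee_J\in\Lambda^+$ and $\lambda-\vee_J<\lambda$ for $J\neq\emptyset$ is exactly what is needed and is consistent with the paper's treatment.
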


\begin{proof}  
We proceed by induction on the dominance order on $ \Lambda^+ $. 
The minimal elements in this order are $ \varpi_0, \varpi_1, \dots, \varpi_n $. 
For these elements, we have $ \Phi^+(\varpi_i) = \emptyset $. 
Therefore, by the definition of the Paper Boat, we obtain $ PB_a(\varpi_i) = \mathcal{I}_a(\varpi_i) $. 
This implies that $c_a(\varpi_i) = |\mathcal{I}_a(\varpi_i)|$. 
On the other hand, by \eqref{eq: coincide recurrence}, we also have $ c_a(\varpi_i) = F_a(\varpi_i) $. 
Thus, we conclude that  $F_a(\varpi_i) = |\mathcal{I}_a(\varpi_i)|$,
which proves the result for the minimal elements in the dominance order.

Now, let $ \lambda \in \Lambda^+ $  and assume that $ F_a(\mu) = |\mathcal{I}_a(\mu)| $ for all $ \mu \in \Lambda^+ $ such that $ \mu < \lambda $. By combining \Cref{thm: recurrence}, our inductive hypothesis and  \eqref{eq: coincide recurrence}, we get
\begin{equation}
\begin{array}{rl}
|\mathcal{I}_a(\lambda)| & \displaystyle = c_a(\lambda) - \sum_{\emptyset \neq J \subset \Phi^+(\lambda)} (-1)^{|J|} |\mathcal{I}_a(\lambda - \vee_J)| \\
&\displaystyle  = c_a(\lambda) - \sum_{ \emptyset \neq J \subset \Phi^+(\lambda)} (-1)^{|J|} F_a(\lambda - \vee_J) \\
&= F_a(\lambda).
\end{array}
\end{equation}
This completes the proof by induction.
\end{proof}

\subsection{Geometric formula in $\widetilde{A_3}$}\label{sec: geoforA3}
We end this section by applying \Cref{cor: RecurrenceImpliesLESS} to prove that, for a fixed $a\in \mathcal{F}$, the function $|\mathcal{I}_a(\lambda)|$ is a polynomial in $\lambda$ for the affine Weyl group of type $A_3$.
These polynomials are explicitly given in \Cref{thm: pols A3} and are in fact geometric polynomials.
For the rest of this section we work in type $A_3$.

We begin by noticing that
\begin{equation}
\mathcal{F} = \{ \mathrm{id}, s_0, s_0s_1, s_0s_3, s_0s_1s_3,s_0s_1s_3s_2  \}. 
\end{equation}

For $a=\mathrm{id}$, we proved in \cite[Theorem B]{castillo2023size} that $|\mathcal{I}_a(\lambda)|$ is a linear combination of the volumes of the faces of the orbit polytope $\mathsf{P}(\lambda)$ \eqref{eq:P_definition}.
In this paper we use the usual Euclidean volume, for more details we refer to \cite[\S 2.3]{castillo2023size}.

\begin{definition}\label{def: V_J}
 Recall that $(W_\mathrm{f},S_\mathrm{f})$ is a Coxeter system.
For $J\subset S_\mathrm{f}$, let $W_J$ be the parabolic subgroup generated by $J$.
The faces of $P(\lambda)$ are given by $F_J(\lambda)=\text{Conv} (W_J \cdot \lambda)$.
We write $$V_J(\lambda):=\text{Vol}(F_J(\lambda)).$$
\end{definition}

With this definition \cite[Theorem B]{castillo2023size} establishes that there exist unique $\mu_J \in \mathbb{R}$ such that
\begin{equation}\label{GF}
|\mathcal{I}_\mathrm{id}(\lambda)|  = \sum_{J\subset S_\mathrm{f}} \mu_J V_J(\lambda),
\end{equation}
for all $\lambda\in\Lambda^+$.
We stress that the numbers $\mu_J$ \textbf{do not} depend on $\lambda$.
This is called the Geometric Formula in \cite{castillo2023size} and it holds for arbitrary affine Weyl groups. 
It is a fact \cite[Remark 4.6]{castillo2023size} that there is a polynomial $p_J\in\bbR[x,y,z]$ such that
\begin{equation*}
V_J(x\varpi_1+y\varpi_2+z\varpi_3)=p_J(x,y,z), \quad \mbox{for all } x,y,z\in\bbZ_{\geq0}.
\end{equation*}
Let us write $(x,y,z):=x\varpi_1+y\varpi_2+z\varpi_3$.
With this notation, \eqref{GF} becomes
\begin{equation}\label{GF1}
|\mathcal{I}_\mathrm{id}(x,y,z)|  = \sum_{J\subset S_\mathrm{f}} \mu_J V_J(x,y,z),
\end{equation}
and we see that $|\mathcal{I}_\mathrm{id}(x,y,z)|$ is a polynomial in $\bbR[x,y,z]$.
The following theorem, a generalization of \eqref{GF1} to all elements of $\mathcal{F}$, establishes the polynomiality for the cardinality of all lower Bruhat intervals for dominant elements.
It precisely answers the question of counting the sets $\{u\in W_\mathrm{aff}\mid u\leq w\}$, for all $w\in W_\mathrm{aff}^+$.

We use the identification $S_\mathrm{f}=\{s_1,s_2,s_3\}$ with the set $\{1,2,3\}$, and we enumerate $\mathcal{F}$ as
\begin{equation}
a_1=\mathrm{id}, \quad a_2=s_0, \quad a_3= s_0s_1, \quad a_4=s_0s_3 , \quad a_5= s_0s_1s_3, \quad a_6= s_0s_1s_3s_2.
\end{equation}

\begin{theorem}\label{thm: pols A3}
Fix any $a\in \mathcal{F}$.
Then for all $J\subset S_\mathrm{f}$, there exist unique coefficients $\mu_{J,a}\in \mathbb{R}$ such that
\begin{equation}
|\mathcal{I}_a(x,y,z)|  = \sum_{J\subset S_\mathrm{f}} \mu_{J,a} V_J(x,y,z).
\end{equation}
Explicitly, the coefficients $\mu_{J,a}$ are given in \Cref{tab: coeff G}, and the polynomials $V_J(x,y,z)$ are given in \Cref{eq: volumenes A3}:
\begin{table}[h]
\centering
\begin{tabular}{|c||c|c|c|c|c|c|c|c|}  \hline
& $ \mu_{\emptyset,a} $  & $ \mu_{\{1\},a} $ & $ \mu_{\{2\},a} $ &$ \mu_{\{3\},a} $ & $ \mu_{\{1,2\},a} $ & $ \mu_{\{1,3\},a} $ &$ \mu_{\{2,3\},a} $ &  $\mu_{\{1,2,3\},a}$  \\[0.5ex]\hline
\rule{0pt}{2.3ex} $a_1$  & $24$ & $22 \, \sqrt{2}$ & $28 \, \sqrt{2}$ & $22 \, \sqrt{2}$ & $16 \, \sqrt{3}$ & $36$ & $16 \, \sqrt{3}$ & $12$   \\\hline
\rule{0pt}{2.3ex} $a_2$    & $48$ & $40 \, \sqrt{2}$ & $52 \, \sqrt{2}$ & $40 \, \sqrt{2}$ & $24 \, \sqrt{3}$ & $60$ & $24 \, \sqrt{3}$ & $12$   \\  \hline
\rule{0pt}{2.3ex} $a_3$   & $72$ & $52 \, \sqrt{2}$ & $70 \, \sqrt{2}$ & $58 \, \sqrt{2}$ & $24 \, \sqrt{3}$ & $72$ & $32 \, \sqrt{3}$ & $12$   \\ \hline
\rule{0pt}{2.3ex} $a_4$ & $72$ & $58 \, \sqrt{2}$ & $70 \, \sqrt{2}$ & $52 \, \sqrt{2}$ & $32 \, \sqrt{3}$ & $72$ & $24 \, \sqrt{3}$ & $12$   \\ \hline
\rule{0pt}{2.3ex} $a_5$ & $120$ & $82 \, \sqrt{2}$ & $100 \, \sqrt{2}$ & $82 \, \sqrt{2}$ & $32 \, \sqrt{3}$ & $96$ & $32 \, \sqrt{3}$ & $12$  \\ \hline
\rule{0pt}{2.3ex} $a_6$ & $192$ & $112 \, \sqrt{2}$ & $148 \, \sqrt{2}$ & $112 \, \sqrt{2}$ & $40 \, \sqrt{3}$ & $108$ & $40 \, \sqrt{3}$ & $12$  \\  \hline       
\end{tabular}
\caption{}
\label{tab: coeff G}
\end{table}

\begin{equation}\label{eq: volumenes A3}
\begin{array}{l}
V_{\emptyset}(x,y,z)=1, \hspace{6em}
V_{\{1\}}(x,y,z)=\sqrt{2} x, \hspace{6em}
V_{\{2\}}(x,y,z)=\sqrt{2} y,    \\[1em] 
V_{\{3\}}(x,y,z)=\sqrt{2} z, \hspace{10em}
V_{\{1,2\}}(x,y,z)=\tfrac{\sqrt{3}}{2}x^2 + 2\sqrt{3}xy + \tfrac{\sqrt{3}}{2}y^2,\\[1em] 
V_{\{1,3\}}(x,y,z)=2xz, \hspace{9.7em}
V_{\{2,3\}}(x,y,z)=\tfrac{\sqrt{3}}{2}y^2 + 2\sqrt{3}yz + \tfrac{\sqrt{3}}{2}z^2, \\[1em] 
V_{\{1,2,3\}}(x,y,z)=\tfrac{1}{3}x^3 + 2x^2y + 4xy^2 + \tfrac{4}{3}y^3 + 3x^2z + 12xyz + 4y^2z + 3xz^2 + 2yz^2 + \tfrac{1}{3}z^3.
\end{array}
\end{equation}
\end{theorem}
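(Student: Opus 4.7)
The plan is to apply \Cref{cor: RecurrenceImpliesLESS}. For each fixed $a \in \mathcal{F}$, I would define the candidate function
\begin{equation*}
F_a(x,y,z) := \sum_{J \subset S_\mathrm{f}} \mu_{J,a}\, V_J(x,y,z),
\end{equation*}
with the coefficients $\mu_{J,a}$ taken from \Cref{tab: coeff G}. By \Cref{cor: RecurrenceImpliesLESS}, to obtain $|\mathcal{I}_a(\lambda)| = F_a(\lambda)$ for all $\lambda \in \Lambda^+$ it suffices to establish the identity
\begin{equation*}
c_a(\lambda) = \sum_{J \subset \Phi^+(\lambda)} (-1)^{|J|}\, F_a(\lambda - \vee_J), \qquad \lambda \in \Lambda^+.
\end{equation*}
The uniqueness of the $\mu_{J,a}$ is handled separately: it follows from the linear independence over $\mathbb{R}$ of the eight polynomials $V_J(x,y,z)$ listed in \eqref{eq: volumenes A3}, which is straightforward by inspection of their supports in $\mathbb{R}[x,y,z]$.

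The verification of the recurrence reduces drastically thanks to the zone decomposition. By \Cref{thm: paper boat}, $c_a$ is constant on each zone $Z \in \mathcal{Z}$, and by \Cref{deltaphi}, $\Phi^+(\lambda)$ is also constant on $Z$; hence as $\lambda$ ranges over $Z$ the right-hand side becomes a polynomial in the free coordinates of $\lambda$, and the problem collapses to a single polynomial identity per zone. With $|\mathcal{Z}| = 3^3 = 27$ and $|\mathcal{F}| = 6$, this amounts to $162$ identities. For each zone $Z(i_1,i_2,i_3)$, the plan is to parametrize $\lambda = x\varpi_1+y\varpi_2+z\varpi_3$ with coordinate $x_j$ fixed to $i_j$ when $i_j \in \{0,1\}$ and free (with $x_j \geq 2$) when $i_j = 2$; compute $\Phi^+(Z)$ and the meets $\vee_J$ for $J \subset \Phi^+(Z)$ via \eqref{eq: join} and the explicit description of $\Phi^+(\lambda)$ given just after \Cref{cor:weigthed_sum}; read off $c_a(Z) = 24\, c_a^+(Z)$ from \Cref{tab:PBin A3} via \Cref{cor: reduction to C+ Paper Boat}; and verify the resulting polynomial identity in $\mathbb{R}[x,y,z]_{\leq 3}$ by direct expansion.

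The main obstacle is not conceptual but purely computational: each identity is routine polynomial manipulation, but $162$ of them collectively demand a computer algebra system. The longest single computation is for the deep zone $Z(2,2,2)$, where $\Phi^+(Z) = \Delta$ has three elements and the right-hand side is an alternating sum of $2^3 = 8$ cubic polynomials; at the other extreme, the four trivial zones corresponding to $\lambda = \varpi_i$ with $i \in \{0,1,2,3\}$ are immediate, since $\Phi^+(\lambda) = \emptyset$ reduces the identity to the single evaluation $c_a(\varpi_i) = F_a(\varpi_i)$. A useful sanity check along the way is that the top-degree component of $F_a$ must reproduce the leading cubic of $q_a$ from \Cref{thmD} (a polynomial independent of $a$), which forces $\mu_{\{1,2,3\},a} = 12$ for every $a$, in agreement with \Cref{tab: coeff G}.
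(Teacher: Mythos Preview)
Your proposal is correct and follows essentially the same approach as the paper: define the candidate $F_a$, reduce the recurrence of \Cref{cor: RecurrenceImpliesLESS} zone-by-zone via \Cref{thm: paper boat} and \Cref{deltaphi}, and then verify the resulting $27\times 6$ polynomial identities by computer. Your treatment is in fact slightly more complete, since you explicitly address the uniqueness of the $\mu_{J,a}$ via the linear independence of the $V_J$, a point the paper leaves implicit.
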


\begin{proof}
For a fixed element $a\in \mathcal{F}$, we define the function $F_a: \Lambda^+\rightarrow \mathbb{R} $ by  
\begin{equation}
F_a(\lambda)= \sum_{J\subset \Phi^+(\lambda)} \mu_{J,a} V_J(x,y,z).
\end{equation}
Our goal is to verify that $F_a$ satisfies \eqref{eq: coincide recurrence} for all $\lambda\in \Lambda^+$.

Since the set of zones $\mathcal{Z}$ (\Cref{def: zones}) partitions $\Lambda^+$, it suffices to verify \eqref{eq: coincide recurrence} within each individual zone $Z\in\mathcal{Z}$. By \Cref{thm: paper boat} and \Cref{deltaphi}, the coefficients $c_a(\lambda)$ and the sets $\Phi^+(\lambda)$ remain constant for all $\mu$ in a given zone $Z$. Consequently, for each zone $Z(i_1,i_2,i_3)\in\mathcal{Z}$, we must confirm the validity of the condition
\begin{equation}\label{eq: conditions zones}
c_a(i_1,i_2,i_3) = \sum_{J \subset \Phi^+(i_1,i_2,i_3)} (-1)^{|J|} F_a((x,y,z) - \vee_J),
\end{equation}
for all $\lambda \in Z(i_1,i_2,i_3)$, where $\lambda = (x,y,z) = x\varpi_1+y\varpi_2+z\varpi_3$.

We perform these verifications using SageMath together with Tables \ref{tab:Phiplusin A3} and \ref{tab:PBin A3}.
Below, we provide illustrative computations for three zones:

\begin{enumerate}[(a)]

\item \textbf{Zone $\mathbf{Z(2,2,2)}$:}
\begin{itemize}
\item Using \Cref{tab:Phiplusin A3}, we find that $\Phi^+(2,2,2)=\{ \alpha_1, \alpha_2,\alpha_3\}$.
\item From \Cref{tab:PBin A3}, we obtain $c_a(2,2,2)=24\cdot c_a^+(2,2,2)=24^2=576$, which does not depend on the choice of $a\in\mathcal{F}$ (as predicted by \Cref{prop:BarquitoGenerico}).
\item Substituting into \eqref{eq: conditions zones}, we get
\end{itemize}
\begin{small}
\begin{equation}\label{eq: condition 222}
\begin{array}{ll}
576 = & F(x,y,z) \\
	& -F_a(x-2,y+1,z)-F_a(x+1,y-2,z+1)-F_a(x,y+1,z-2) \\
	&  + F_a(x-1,y-1,z+1)+F_a(x-2,y+2,z-2)+F_a(x+1,y-1,z-1)\\
	&  -F_a(x-1,y,z-1).
\end{array}
\end{equation}
\end{small}

Each term in the right-hand side of \eqref{eq: condition 222} is obtained computing the join of a subset of $\Phi^+(2,2,2)$. 
For instance, if  $J=\{ \alpha_1, \alpha_2\}$ we have $\vee_J=\alpha_1 + \alpha_2=(1,1,-1)$, leading to
\begin{equation}\label{eq: example of term}
(-1)^{|J|} F_a((x,y,z) - \vee_J) = F_a(x-1,y-1,z+1).
\end{equation}
The other terms are computed in a similar fashion.
We have that \eqref{eq: condition 222} simplifies to
\begin{equation*}
576 = 48\mu_{\{1,2,3\},a},
\end{equation*}
and consulting \Cref{tab: coeff G}, the equality holds for all $a\in \mathcal{F}$.

\item \textbf{Zone $\mathbf{Z(1,1,2)}$:}
\begin{itemize}
\item Here, $\Phi^+(1,1,2)= \{\alpha_3, \alpha_{12}\}$.
\item For $a=a_5=s_0s_1s_3$, we compute $c_{a_5}(1,1,2) = 24 \cdot c_{a_5}^+(1,1,2) = 24 \cdot 34 = 816$.
\item Substituting into \eqref{eq: conditions zones}, we obtain the relation
\end{itemize}
\begin{equation}
816 = F_{a_5}(1,1,z)-F_{a_5}(1,2,z-2)- F_{a_5}(0,0,z+1) +F_{a_5}(0,1,z-1),
\end{equation}
which simplifies to
\begin{equation}
816 = -3\sqrt{3}\mu_{\{1,2\},a_5} + 4\mu_{\{1,3\},a_5} + 3\sqrt{3}\mu_{\{2,3\},a_5} + 36\mu_{\{1,2,3\},a_5}.
\end{equation}
Using \Cref{tab: coeff G}, we confirm this identity.

\item \textbf{Zone $\mathbf{Z(1,1,1)}$:}
\begin{itemize}
\item We find $\Phi^+(1,1,1)= \{\alpha_{12}, \alpha_{23}\}$.
We note that $J=\Phi^+(1,1,1)$ gives $\vee_J=\alpha_{13}=(1,0,1)$, which is different from $\alpha_{12}+\alpha_{23}$.
\item With $a=a_2=s_0$, the condition to verify is
\end{itemize}
\begin{equation} \label{eq: condition 111}
720 = F_{a_2}(1,1,1)-F_{a_2}(0,0,2)- F_{a_2}(2,0,0) +F_{a_2}(0,1,0).
\end{equation}
After simplifying, we get the equivalent condition
\begin{align}
\begin{split}
720 =& - \sqrt{2}\mu_{\{1\},a_2} + \tfrac{3}{2}\sqrt{3}\mu_{\{1,2\},a_2} + 2\sqrt{2}\mu_{\{2\},a_2} + \tfrac{3}{2}\sqrt{3}\mu_{\{2,3\},a_2}\\
&- \sqrt{2}\mu_{\{3\},a_2} + 28\mu_{\{1,2,3\},a_2} + 2\mu_{\{1,3\},a_5}.  
\end{split}
\end{align}
Substituting values from \Cref{tab: coeff G}, the equality is verified.
\end{enumerate}

By repeating similar calculations for each zone $Z\in\mathcal{Z}$, we confirm that $F_a$ satisfies \eqref{eq: conditions zones} for all $\lambda \in \Lambda^+$. Thus, by \Cref{cor: RecurrenceImpliesLESS}, we conclude that $F_a(\lambda) = |\mathcal{I}_a(\lambda)|$ for all $\lambda \in \Lambda^+$.
\end{proof}

\begin{remark}\label{rmk: permutohedron approx}
We stress that, for $a\in\mathcal{F}$, \Cref{thm: pols A3} implies that the cardinality of $\mathcal{I}_a(x,y,z)$ is a polynomial in $\bbR[x,y,z]$ of degree $3$.
Furthermore, \Cref{tab: coeff G} shows that that the homogeneous part of degree $3$ of this polynomial, is
\begin{equation*}
\mu_{\{1,2,3\},a}\cdot V_{\{1,2,3\}}(x,y,z)=12\cdot V_{\{1,2,3\}}(x,y,z),
\end{equation*}
which does not depend on $a$.
Heuristically, we view this phenomenon as the permutohedron $\mathsf{P}(\lambda)$ serving as an approximation of the lower intervals $\mathcal{I}_a(\lambda)$.
In \Cref{thm:lattice_count} we will prove this for type $\widetilde{A}_n$ in general.
\end{remark}

After extensive computations in types $\widetilde{A_3}$ and $\widetilde{B_3}$, we conjecture a generalization of  \Cref{thm: pols A3} to all affine Weyl groups and to the whole lowest two-sided cell $\mathfrak{C}$.

\begin{conjecture}\label{conj: pols geom}
Let $\Phi$ be any irreducible root system, and recall the parametrization
\begin{equation}
\Theta:\mathcal{F}\times(\Lambda^\vee)^+\times\mathcal{F}\times\Omega\xlongrightarrow{\sim}\mathfrak{C}.
\end{equation}
Fix $a,b\in\mathcal{F}$ and $\sigma\in\Omega$.
Then there exist unique coefficients
$\mu_{J,a,b,\sigma}\in \mathbb{R}$ such that
\begin{equation}\label{GFAmigos}
|\mathcal{I}(a,\lambda,b,\sigma)|  = \sum_{J\subset S_\mathrm{f}} \mu_{J,a,b,\sigma} V_J(\lambda),
\end{equation}
for all $\lambda\in(\Lambda^\vee)^+$.
\end{conjecture}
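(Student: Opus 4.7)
The plan is to follow, \emph{mutatis mutandis}, the blueprint used to establish \Cref{thm: pols A3} for type $\widetilde A_3$. First I would reduce to the case $\sigma=\mathrm{id}$: by \Cref{rmk: forget sigma} conjugation by $\sigma$ is a length-preserving order automorphism of $W_{\mathrm{aff}}$, so $|\mathcal{I}(a,\lambda,b,\sigma)| = |\mathcal{I}(a,\lambda,b,\mathrm{id})|$ and one may simply define $\mu_{J,a,b,\sigma} := \mu_{J,a,b,\mathrm{id}}$. From this point on $\sigma$ can be dropped from the notation, and uniqueness of the coefficients (once existence is settled) follows from the fact that the polynomials $\{V_J\}_{J\subset S_{\mathrm f}}$ are linearly independent, as one already observes in the $\widetilde A_3$ case.

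The preparatory step is to establish \Cref{conj: lowerint as pbs} and \Cref{conj: paperboat zones} in full generality. Upon taking cardinalities this would upgrade \Cref{cor:weigthed_sum} to a weighted lattice-point formula
\begin{equation*}
|\mathcal{I}(a,\lambda,b)| \;=\; \sum_{Z\in\mathcal Z} c(a,Z,b)\,|\mathfrak I_\lambda\cap Z|,
\end{equation*}
valid for every affine type and every $(a,b)\in\mathcal F^2$. As noted after \Cref{thm: loweint as pbs}, the only missing piece for \Cref{conj: lowerint as pbs} is a general version of \Cref{lem: meet pesos}; this in turn would follow from a vertex-comparison criterion generalizing \Cref{prop: main tool} to all of $\mathfrak C$, which is essentially the statement sketched by Schremmer in \cite{schremmer2024affine}. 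The generalization of \Cref{conj: paperboat zones} should then be obtained by adapting the translation argument in \Cref{thm: paper boat} once the correct zone partition is in place outside type $A$.

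With the weighted formula at hand, I would recast the right-hand side through Ehrhart/Brion theory: each $\mathfrak I_\lambda\cap Z$ is the set of integer points in a certain rational slice of the permutohedron $\mathsf P(\lambda)$, and \Cref{thmD}-type arguments (already carried out in \Cref{sec: latticepoints}) yield quasi-polynomiality in $\lambda$ of the full sum. The first genuinely hard step is promoting this quasi-polynomial to an honest polynomial: this should come from identities between the coefficients $c(a,Z,b)$ that force the periodic parts of the zone-slice counts to cancel. Such identities ought to be traceable to the $W_{\mathrm f}$-symmetry of $\Lambda^\vee/\bbZ\Phi^\vee$ combined with the recursion of \Cref{thm: recurrence}; and a criterion analogous to \Cref{cor: RecurrenceImpliesLESS} would then allow one to verify a conjectural geometric formula zone by zone, as was done in \Cref{thm: pols A3}.

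The main obstacle, and the conceptually deepest part, is \emph{geometricity}: showing that the resulting polynomial lies in the subspace of $\bbR[\lambda]$ spanned by the face-volume polynomials $\{V_J\}_{J\subset S_{\mathrm f}}$. This subspace has dimension at most $2^n$, while the ambient space of polynomials of degree $\le n$ in the $n$ fundamental-coweight coordinates is vastly larger, so there are nontrivial linear constraints to impose. My strategy would be of Brion type: write the generating function $\sum_{\mu\in\mathsf P(\lambda)\cap\Lambda^\vee} e^{\mu}$ as an alternating sum of cone generating functions indexed by the vertices $w\cdot\lambda$ of $\mathsf P(\lambda)$ (hence by $W_{\mathrm f}$), and show that after intersecting with a zone $Z$ and summing against $c(a,Z,b)$, the answer is forced into the $V_J$-span by $W_{\mathrm f}$-equivariance together with the alcoved structure of $\Pi^+$ described in \eqref{def: paralele}. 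I expect the verification to become tractable once the correct $W_{\mathrm f}$-equivariant formulation is in place, but uncovering this formulation from the combinatorial data of Paper Boats outside type $A$ is where the bulk of the work will lie, and where I expect the real difficulty of \Cref{conj: pols geom} to reside.
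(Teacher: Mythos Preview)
The statement you are addressing is \Cref{conj: pols geom}, which is an \emph{open conjecture} in the paper, not a theorem. There is no proof in the paper to compare your proposal against; the authors explicitly present it as a conjecture supported only by computations in types $\widetilde{A}_3$ and $\widetilde{B}_3$, and they remark that both the polynomiality and the geometricity parts are ``difficult.'' What you have written is therefore not a proof but a research programme, and you are candid about this in your final paragraph.

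That said, your outline correctly identifies the architecture one would expect such a proof to follow, and it tracks the paper's own logical dependencies well: the reduction to $\sigma=\mathrm{id}$ via \Cref{rmk: forget sigma} is exactly what the paper notes immediately after stating the conjecture; the need for \Cref{conj: lowerint as pbs} and \Cref{conj: paperboat zones} in full generality is acknowledged; and the quasi-polynomiality step via Ehrhart theory mirrors \Cref{sec: latticepoints}. However, you should be aware that each of your ``preparatory steps'' is itself an open problem in the paper. In particular, \Cref{conj: lowerint as pbs} is unproven outside the dominant cone in type $A$ (the paper says the missing ingredient is a general \Cref{lem: meet pesos}, which in turn needs a general \Cref{prop: main tool}); \Cref{conj: paperboat zones} is open even in type $A$ for the full cell $\mathfrak{C}$; and the promotion from quasi-polynomial to polynomial is precisely \Cref{conj:miracle}, which the paper calls ``truly miraculous'' and leaves open. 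Your Brion-type strategy for geometricity is a reasonable heuristic, but the paper gives no evidence that $W_{\mathrm f}$-equivariance alone forces the answer into the $V_J$-span, and you have not supplied a mechanism either. In short: your roadmap is sound as a description of what would need to be done, but none of its nontrivial steps is currently a proof, and the paper does not claim otherwise.
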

By \Cref{rmk: forget sigma} the choice of $\sigma$ is superfluous, i.e., if the conjecture holds for $\sigma=\mathrm{id}$ then it holds for all $\sigma$.
In this case we would have $\mu_{J,a,b,\sigma}=\mu_{J,a,b,\mathrm{id}}$.

\section{Lattice points in polyhedra}\label{sec: latticepoints}
In this section we work entirely in type $\widetilde{A}_n$.
As before we use the standard label of the simple roots $\Delta =\{ \alpha_1, \dots, \alpha_n \}$ with fundamental weigths
$\{ \varpi_1, \dots, \varpi_n\} $.
For notation we denote the set $\{1,\dots,k\} = [k]$ for any natural number $k$.

We begin by rewriting the formula from \Cref{cor:weigthed_sum} in terms of discrete geometry.
Recall the definition of $\mathsf{P}(\lambda)$ in \eqref{eq:P_definition}.
We had previously defined certain discrete zones in \Cref{def: zones}, and now we define a continuous version. For $\boldsymbol{i}\in \{0,1,2\}^n$ we define

\begin{equation}\label{eq: continuous zones}
Z_\mathbb{R}(\boldsymbol{i}) =  \{  x \in C^+ \mid \langle x , \alpha_j \rangle = i_j \mbox{ if } i_j=0, 1, \mbox{ and } \langle x, \alpha_j \rangle \geq 2 \mbox{ if }\ i_j=2 \}.
\end{equation}

\begin{definition}
Let $ \lambda \in C^{+} $ be an element in the dominant chamber.
We define the following polyhedra related to $  \mathsf{P}(\lambda)  $.
\begin{align}
\mathsf{P}^+(\lambda) &= \mathsf{P}(\lambda)\cap C^+, \label{eq:P+} \\
\mathsf{P}_{\boldsymbol{i}}^+(\lambda) &= \mathsf{P}^{+}(\lambda) \cap  Z_\mathbb{R}(\boldsymbol{i}), \, \forall \boldsymbol{i} \in \left\{ 0,1,2 \right\}^{n}. \label{eq:polyhedral_zone}
\end{align}
\end{definition}

With these definitions we can combine \eqref{eq: ideal as latticepoints} and \eqref{eq:weighted_sum} to obtain
\begin{equation}\label{eq:weighted_lattice_count}
|\li{\lambda}| = \sum_{\boldsymbol{i} \in \left\{ 0,1,2 \right\}^{n}} c_a(\boldsymbol{i}) |\mathsf{P}_{\boldsymbol{i}}^+(\lambda) \cap \left( \mathbb{Z}\Phi + \lambda \right) |,
\end{equation}
for $a\in \mathcal{F}$ and $\lambda \in \Lambda^+$ a dominant weight.

\Cref{eq:weighted_lattice_count} is the key connection between the Bruhat order and polyhedral combinatorics.
Counting lattice points in polyhedra is a classic problem and it is well known that such problem exhibits a polynomial behavior \cite{barvinok2008integer}.
There are however some technical details that we must address to properly use the available results.

\subsection{Vertices of $\mathsf{P}^{+}(\lambda)$}

We begin by describing how the vertices of $ \mathsf{P}^{+}(\lambda)  $ depends on $ \lambda $.

\begin{prop}\label{prop:vertices_lambda}

Let $ \lambda \in C^{+}$ be any element in the dominant region.
The point $ \mathbf{0} $ is a vertex of $ \mathsf{P}^{+}(\lambda)  $.
For every other vertex $ \sum_{i=1}^n a_i \varpi_{i} $ we have that each $ a_i $ is a $\mathbb{Z} \left[ \frac{1}{(n+1)!} \right]$-linear combination of $\lambda_1,\dots, \lambda_n$, where $ \lambda = \sum_{i=1}^{n} \lambda_{i} \varpi_{i} $.
\end{prop}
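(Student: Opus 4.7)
The plan is to dispose of $\mathbf{0}$ separately, and to exhibit every other vertex as the $W_I$-averaged image of some $W_{\mathrm{f}}$-translate of $\lambda$, for a parabolic subgroup $W_I\subset W_{\mathrm{f}}$. Since $|W_I|$ divides $|W_{\mathrm{f}}|=(n+1)!$ by Lagrange's theorem, and since the Weyl group acts on $\Lambda=\bigoplus_i\mathbb{Z}\varpi_i$ preserving the lattice, the $\varpi$-coordinates of such a vertex will automatically be $\mathbb{Z}[1/(n+1)!]$-linear in $\lambda_1,\dots,\lambda_n$.

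That $\mathbf{0}\in\mathsf{P}^+(\lambda)$ is immediate from the fact that $E$ carries no nonzero $W_{\mathrm{f}}$-fixed vector: the average $\frac{1}{|W_{\mathrm{f}}|}\sum_{w\in W_{\mathrm{f}}}w\lambda$ equals $\mathbf{0}$, so $\mathbf{0}\in\mathsf{P}(\lambda)$, and $\mathbf{0}\in C^+$ is trivial. Because $C^+$ is a pointed cone with apex $\mathbf{0}$, the tangent cone of $\mathsf{P}^+(\lambda)$ at $\mathbf{0}$ lies inside $C^+$ and is therefore pointed, making $\mathbf{0}$ an extreme point.

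Now fix a vertex $v\neq\mathbf{0}$ and set $I=\{i\in[n]:\langle v,\alpha_i\rangle=0\}\subsetneq[n]$ and $H_I=\bigcap_{i\in I}H_{\alpha_i}$. The key observation is that $H_I$ is exactly the fixed subspace $E^{W_I}$ of the parabolic subgroup $W_I=\langle s_i:i\in I\rangle$, and that $\mathsf{P}(\lambda)\cap H_I$ is precisely the image of $\mathsf{P}(\lambda)$ under the orthogonal projection $\pi_I=\frac{1}{|W_I|}\sum_{u\in W_I}u$ (the equality $\pi_I(\mathsf{P}(\lambda))=\mathsf{P}(\lambda)^{W_I}$ uses only $W_I$-invariance and convexity of $\mathsf{P}(\lambda)$). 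I would then argue that $v$ is a vertex of this projected polytope: each $H_{\alpha_i}$ with $i\in I$ is a supporting hyperplane of $\mathsf{P}^+(\lambda)$, so $F:=\mathsf{P}^+(\lambda)\cap H_I$ is a face of $\mathsf{P}^+(\lambda)$ having $v$ as a vertex; by the maximality of $I$, $v$ lies in the relative interior of $C^+\cap H_I$, so locally around $v$ the set $F$ agrees with $\mathsf{P}(\lambda)\cap H_I$. Since vertices of a linear image of a polytope arise from vertices of the original, there exists $w\in W_{\mathrm{f}}$ with
\begin{equation*}
v=\pi_I(w\lambda)=\frac{1}{|W_I|}\sum_{u\in W_I}uw\lambda.
\end{equation*}
The action of $W_{\mathrm{f}}$ preserves $\Lambda$, so in the $\varpi$-basis each $uw\lambda$ has coordinates that are $\mathbb{Z}$-linear combinations of $\lambda_1,\dots,\lambda_n$; dividing by $|W_I|$ and invoking $|W_I|\mid(n+1)!$ yields the claim.

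The step I expect to require the most care is the local argument showing that $v$ is genuinely a vertex of $\mathsf{P}(\lambda)^{W_I}$ rather than merely a vertex of the smaller set $\mathsf{P}(\lambda)\cap C^+\cap H_I$; this relies crucially on the maximality of $I$, which guarantees that no additional wall $H_{\alpha_j}$, $j\notin I$, passes through $v$, so that the inequalities $\langle x,\alpha_j\rangle\ge 0$ are inactive near $v$ and the defining constraints of $\mathsf{P}^+(\lambda)$ locally reduce to those of $\mathsf{P}(\lambda)\cap H_I$.
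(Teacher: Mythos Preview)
Your argument is correct and takes a genuinely different route from the paper's. The paper proceeds by pure linear algebra: it writes the facet description of $\mathsf{P}^+(\lambda)$, observes that every vertex solves an $n\times n$ subsystem drawn from $\{\langle\varpi_i,x\rangle=\langle\varpi_i,\lambda\rangle\}\cup\{\langle\alpha_i,x\rangle=0\}$, and then applies Cramer's rule together with a separate lemma (\Cref{lem:cool}, \Cref{prop:minors_cartan}) bounding all minors of the matrix formed by stacking $\mathbf{A}_n^{-1}$ on top of $\mathbf{I}_n$. That lemma, in turn, is proved by Jacobi's complementary minor formula and an induction on the block structure of Cartan submatrices. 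Your approach replaces this determinant computation by a representation-theoretic observation: a nonzero vertex $v$ lies on the fixed locus $E^{W_I}$ for the parabolic $W_I$ determined by the active $\alpha$-constraints, and since $\mathsf{P}(\lambda)\cap E^{W_I}=\pi_I(\mathsf{P}(\lambda))$ by convexity and $W_I$-invariance, $v$ must be the $W_I$-average of some $w\lambda$. Lagrange's theorem then supplies the denominator bound for free. The trade-offs are clear: the paper's method is mechanical and yields slightly sharper information (each minor actually divides $(n+1)!$, not merely $|W_I|$), while your method is cleaner, avoids the auxiliary minor lemma entirely, and makes the role of the Weyl group symmetry transparent. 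Your local argument that $v$ is a vertex of $\mathsf{P}(\lambda)\cap H_I$, not merely of $\mathsf{P}^+(\lambda)\cap H_I$, is sound: maximality of $I$ ensures the remaining chamber inequalities are strict at $v$, so the two sets coincide in a neighbourhood, and extremality is a local property.
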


\begin{proof}
The inequality description of $\mathsf{P}^+(\lambda)$ has the following form
\begin{equation}\label{eq: P plus ineq}
\left\{ x \in E \, \middle \vert 
\begin{array}{rll}
\langle \varpi_i , x \rangle &\leq \langle \varpi_i, \lambda \rangle, & i \in [n],  \\
\langle \alpha_{i} , x \rangle &\geq 0, & i \in [n],\\
\end{array}
\right\}.
\end{equation}
Every vertex is the unique solution to a linear subsystem of
\begin{equation}\label{eq:subsystem}
\begin{array}{rll}
\langle \varpi_i , x \rangle &= \langle \varpi_i, \lambda \rangle, & i \in [n],  \\
\langle \alpha_{i} , x \rangle &= 0, & i \in [n],\\
\end{array}
\end{equation}
consisting of of $n$ equations such that the set of normal\footnote{For a linear equality of the form $ \langle \mathbf{a}, x \rangle = b $, we call the vector $ \mathbf{a} $ the normal vector.} vectors are linearly independent, see \cite[Lemma 4.9]{barvinok2008integer}.
For example, the point $\mathbf{0}$ is the unique solution to the subsystem consisting of the simple roots.
Since $\mathbf{0} \in \mathsf{P}^+(\lambda)$, then it is a vertex. 
In general, writing $ x = \sum_i x_i \varpi_i $ we transform the system \eqref{eq:subsystem} into a matrix equation of the form $ \mathbf{C}x = b(\lambda) $, where $b(\lambda)$ is a column vector such that each entry is a $ \mathbb{Z}[\frac{1}{n+1}] $-linear in $ \lambda_{1}, \dots, \lambda_{n} $, since $(n+1)\langle \varpi_i, \varpi_j \rangle \in \mathbb{Z}$ for every pair $i,j$.
The matrix $ \mathbf{C} $ has to be invertible and by Cramer's rule we have that the entries of the unique solution are a $ \mathbb{Z}[\frac1D] $-linear combination of the entries of $ b(\lambda) $, where $ D := \det(\mathbf{C}) $ .
By \Cref{lem:cool} the factor $\frac{1}{n+1}$ cancels and we get that each coordinate is indeed a $\zfrac$-linear combination of $\lambda_1, \dots, \lambda_n$.

\end{proof}

\begin{lemma}\label{lem:cool}
Consider the $ 2n \times n $ matrix $ \mathbf{M}_{n} $ obtained by concatenating vertically the inverse of the Cartan matrix and the 
and the identity matrix $ \mathbf{I}_{n} $.
Then every maximal minor of $ \mathbf{M}_{n} $ is either $0$ or $k/(n+1)$ where $k$ divides $ (n+1)! $.
\end{lemma}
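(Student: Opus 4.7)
The plan is to reduce, via Jacobi's complementary-minor identity, the claim about maximal minors of $\mathbf{M}_n$ to a divisibility statement about minors of the Cartan matrix $C$ of type $A_n$, and then to prove that divisibility by combining Cauchy-Binet for $C=DD^T$ with a combinatorial count of spanning selections in a path graph.

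For the reduction, I would first observe that every maximal (i.e.\ $n\times n$) minor of $\mathbf{M}_n$ is, up to sign, a minor of $A=C^{-1}$: choosing $r$ rows $R$ from the top block and the remaining $n-r$ rows indexed by $S\subseteq[n]$ from $I_n$, and then expanding the $n\times n$ determinant along the standard-basis rows coming from $I_n$, leaves precisely $\pm\det(A_{R,\,[n]\setminus S})$, an $r\times r$ minor of $A$. Jacobi's identity then gives
\begin{equation*}
\det(A_{R,\,[n]\setminus S}) \;=\; \pm\,\frac{\det(C_{S,\,[n]\setminus R})}{\det C} \;=\; \pm\,\frac{\det C_{S,\,[n]\setminus R}}{n+1},
\end{equation*}
so the maximal minor of $\mathbf{M}_n$ has the form $\pm M/(n+1)$ for some square minor $M$ of $C$ of size in $\{0,1,\dots,n\}$. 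The lemma thus reduces to showing that every such $M$ divides $(n+1)!$.

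To prove this reduced claim I would factor $C=DD^T$, where $D$ is the $n\times(n+1)$ signed incidence matrix of the oriented path $1\to 2\to\cdots\to n{+}1$ (its $i$-th row being the simple root $\alpha_i=\varepsilon_i-\varepsilon_{i+1}$). Being the incidence matrix of a tree, $D$ is totally unimodular, and Cauchy-Binet gives
\begin{equation*}
\det C_{I,J} \;=\; \sum_{K\in\binom{[n+1]}{|I|}} \det D_{I,K}\,\det D_{J,K},
\end{equation*}
with each summand in $\{-1,0,1\}$. A standard analysis of incidence matrices of forests shows that $\det D_{I,K}$ is nonzero exactly when $K$ is contained in the vertex set $V_I$ of the edge subgraph $I$ and omits one vertex from each connected component of that subgraph; the same holds for $J$. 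Hence the nonzero summands are indexed by subsets $K\subseteq V_I\cap V_J$ that are simultaneously valid spanning selections for both subgraphs, and they are parametrized by picking one vertex per atom of the common refinement of the two component partitions. If these atoms have sizes $m_1,\dots,m_r$, then there are $\prod m_\ell$ such $K$'s and one checks $\sum m_\ell\le n+1$.

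The final and most delicate step is to upgrade this bound to a genuine divisibility. An elementary number-theoretic observation---that $m!/\prod m_\ell = \binom{m}{m_1,\dots,m_r}\prod(m_\ell-1)!$ is an integer whenever $m=\sum m_\ell$, so $\prod m_\ell \mid m!\mid (n+1)!$---already shows that the raw count $\prod m_\ell$ divides $(n+1)!$. The remaining obstacle, which I view as the technical heart of the proof, is to control the signs in the Cauchy-Binet expansion: one must verify that within each atom of the common refinement the signs $\det D_{I,K}\,\det D_{J,K}$ attached to the different choices of the omitted vertex all agree, so that $|\det C_{I,J}|$ equals $\prod m_\ell$ and no further cancellation can spoil the divisibility. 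I would handle this sign analysis by induction on $n$ via a blockwise expansion that exploits the tridiagonal structure of $C$ and the coherent orientation of the path; this is where the specific $A$-type geometry is really used.
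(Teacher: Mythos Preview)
Your reduction via Jacobi's complementary-minor identity is exactly what the paper does: maximal minors of $\mathbf{M}_n$ are minors of $\mathbf{A}_n^{-1}$, and Jacobi turns these into minors of $\mathbf{A}_n$ divided by $n+1$. From here the two arguments diverge.

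The paper handles the minor-divisibility statement (its Proposition~5.4) by a direct induction on the size of the submatrix, using only the tridiagonal shape of $\mathbf{A}_n$: if the same rows and columns are deleted the submatrix is block diagonal in smaller Cartan matrices $\mathbf{A}_{\mu_i}$, giving $\prod(\mu_i+1)$; otherwise one looks at the first index $u$ where the chosen rows and columns differ, observes that the $(u,u)$ entry is $0$ or $-1$, and in either case the submatrix is block-triangular, so $|\det\mathbf{B}|$ factors as a product of two strictly smaller minors. The divisibility $\prod(\mu_i+1)\mid(n+1)!$ then follows from the multinomial argument you also give.

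Your Cauchy--Binet route via $C=DD^T$ is a genuinely different idea and gives a pleasant combinatorial interpretation of the nonzero terms. Two issues, though. First, the bookkeeping is looser than you suggest: the valid $K$'s are not simply ``one vertex per atom of the common refinement'' but rather $K=V_I\setminus S_I=V_J\setminus S_J$ where $S_I,S_J$ each pick one vertex per component of their respective subgraphs; the intersection constraints force $V_I\setminus V_J\subset S_I$ and $V_J\setminus V_I\subset S_J$, and the resulting count is not immediately $\prod m_\ell$ without further argument. Second, and more importantly, the sign coherence you flag as the ``technical heart'' is indeed essential (cancellation would destroy the divisibility conclusion, since an integer below $\prod m_\ell$ need not divide $(n+1)!$), and the method you propose for it---``induction on $n$ via a blockwise expansion that exploits the tridiagonal structure''---is precisely the paper's direct argument. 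So the Cauchy--Binet layer does not avoid that induction; it only postpones it. The paper's approach is shorter because it runs the block-triangular induction once on $\mathbf{A}_n$ itself rather than on an auxiliary sign analysis.
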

\begin{proof}
The set of maximal minors of $\mathbf{M}_{n}$ coincides with the set of all minors of the inverse of the Cartan matrix $\mathbf{A}_n^{-1}$.
The Jacobi's complementary minor formula \cite[Lemma A.1]{caracciolo2013algebraic} states that
\begin{equation}\label{eq:jacobi}
 \det \mathbf{N}[I,J] = |(\det(\mathbf{N}))(\det \mathbf{N}^{-1}[I^c, J^c])|,
\end{equation}
where $\mathbf{N}[I,J]$ is the submatrix of $\mathbf{N}$ obtained by restricting to rows indexed by $I$ and columns indexed by $J$.
Applying \eqref{eq:jacobi} to the inverse Cartan matrix we obtain that the set of all (absolute values of) minors are the same as those of the Cartan matrix multiplied by $1/(n+1)$, since it is well-known that $\det(\mathbf{A}_n)=n+1$.
The conclusion follows from \Cref{prop:minors_cartan}.
\end{proof}

\begin{prop}\label{prop:minors_cartan}
Let $\mathbf{A}_n$ be the Cartan  matrix of type $A_n$ and
let $\mathbf{B}$ be a submatrix of $\mathbf{A}_n$ obtained by eliminating some rows and the same number of columns. 
Then, either $\det(\mathbf{B})=0$ or there exists a partition $\mu=(\mu_1,\ldots ,\mu_\ell)$ of $0\leq m\leq n$ with $\ell \leq n+1-m$ parts such that 
\begin{equation} \label{eq:det minor}
	 |\det (\mathbf{B})| =  (\mu_1+1)(\mu_2+1) \cdots (\mu_\ell+1).
\end{equation}
In particular, it divides $(n+1)!$.
\end{prop}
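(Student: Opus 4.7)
The plan is to use the factorization $\mathbf{A}_n = V^T V$, where $V$ is the $(n+1) \times n$ signed incidence matrix of the path graph $P_{n+1}$ on vertices $\{0, 1, \ldots, n\}$ (so column $j$ has $V_{j-1,j} = 1$, $V_{j,j} = -1$, and zeros elsewhere). Applying the Cauchy--Binet formula then yields
\begin{equation*}
\det \mathbf{B} \;=\; \sum_{\substack{K \subseteq \{0,\ldots,n\} \\ |K| = n-|I|}} \det(V[K, I^c]) \cdot \det(V[K, J^c]).
\end{equation*}
Since $V$ is totally unimodular (being the signed incidence matrix of a tree), every minor on the right-hand side lies in $\{-1, 0, +1\}$, so the computation of $\det \mathbf{B}$ reduces to a signed count.

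First, I would characterize the nonzero terms. Removing the $|I|$ edges indexed by $I$ partitions $P_{n+1}$ into $|I|+1$ subpaths whose vertex counts $t_1, \ldots, t_{|I|+1}$ sum to $n+1$. Using the fact that deleting any single row of the signed incidence matrix of a tree leaves a $\pm 1$ determinant, a direct computation shows $\det(V[K, I^c]) = \pm 1$ precisely when $K^c$ contains exactly one vertex of each of these $|I|+1$ subpaths, and vanishes otherwise; the analogous statement holds for $J$, with its own decomposition into subpaths of sizes $t'_1, \ldots, t'_{|J|+1}$. Thus the contributing $K$'s are exactly those whose complement $K^c$ is simultaneously a transversal of both interval partitions of $\{0, \ldots, n\}$.

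Second, I would first treat the principal case $I = J$: the two partitions coincide, $\det(V[K, I^c])^2 = 1$ for every valid $K$, and the number of valid $K^c$'s equals $\prod_j t_j$, so $\det \mathbf{B} = \prod_j t_j$. Discarding singleton components and setting $\mu_j = t_j - 1$ on the remaining ones yields a partition of $m$ with $\ell$ parts satisfying $m + \ell \leq n + 1$, matching the claim. For the non-principal case $I \neq J$, I would argue that either no common transversal exists (in which case $\det \mathbf{B} = 0$), or the signed sum factors along a common refinement of the two interval partitions into a product of determinants of smaller type-$A$ Cartan matrices, again giving a product of $(\mu_j + 1)$'s.

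The main obstacle will be the sign bookkeeping in the non-principal case: one must verify that the products $\det(V[K, I^c]) \det(V[K, J^c])$ depend only on the common block structure, so that the sum factors cleanly rather than producing an integer outside the claimed family. Once this block decomposition is secured, the final divisibility $\prod(\mu_j + 1) \mid (n+1)!$ follows from the chain $\prod(\mu_j + 1) \mid \prod(\mu_j + 1)! \mid (m+\ell)! \mid (n+1)!$, using the bound $m + \ell \leq n + 1$.
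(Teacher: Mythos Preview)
Your approach via the factorization $\mathbf{A}_n = V^{T}V$ and Cauchy--Binet is genuinely different from the paper's. The paper argues by induction on the size of $\mathbf{B}$: for a principal submatrix it reads off the block-diagonal structure (each block a smaller Cartan matrix); for a non-principal one it locates the first index $u$ where the retained row set and column set disagree, observes that $b_{uu}\in\{0,-1\}$ because $\mathbf{A}_n$ is tridiagonal, and deduces a block-triangular shape forcing either $\det\mathbf{B}=0$ or $|\det\mathbf{B}|=|\det X_{11}|\cdot|\det X_{33}|$ for strictly smaller minors. No incidence matrices or sign computations appear.

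Your principal case ($I=J$) is cleaner than the paper's: every term in the Cauchy--Binet sum is a square, so $\det(\mathbf{A}_n[I^c,I^c])=\prod_j t_j$ drops out as a pure transversal count, and your passage to a partition $\mu$ with $m+\ell\le n+1$ is correct. The final divisibility chain coincides with the paper's.

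The non-principal case, however, is not yet a proof. You need two things you have not established: first, that the products $\det(V[K,I^c])\det(V[K,J^c])$ have constant sign as $K$ ranges over common transversals (so that the Cauchy--Binet sum equals $\pm|\{\text{common transversals}\}|$ rather than some smaller integer via cancellation); second, that the number of common transversals of two interval partitions of $\{0,\ldots,n\}$ into $|I|+1$ blocks is itself a product $\prod(\mu_j+1)$ satisfying the bound $m+\ell\le n+1$. Your dichotomy ``no common transversal or factors along the refinement'' also presupposes the first point, since a priori cancellation is another route to $\det\mathbf{B}=0$. Both claims look plausible (and check out in small examples), but carrying them through is exactly the ``sign bookkeeping'' you flag as the obstacle. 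The paper's block-triangular reduction bypasses this entirely; if you wish to complete your route instead, one concrete plan is to show that moving a single element of $K^c$ by one step within its component flips both factors $\det(V[K,I^c])$ and $\det(V[K,J^c])$ by the same sign, hence leaves their product unchanged.
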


\begin{proof}
Let $k$ be the size of the square matrix $\mathbf{B}$. 
We proceed by induction on $k$. 
If $k=1$ then $|\det (\mathbf{B})|=0,1,$ or $2$.  
If $|\det (\mathbf{B})|=1$ we take $\mu $  as the empty partition of $0$. 
Similarly, if $|\det (\mathbf{B})|=2$ we take $\mu =(1)$. 
This proves the base case. 

We now fix $2\leq k\leq n $ and suppose that \eqref{eq:det minor} holds for matrices of size less than $k$. 
We first suppose that the set of rows and columns eliminated from $\mathbf{A}_n$ to obtain $\mathbf{B}$ is the same. 
It follows that $\mathbf{B}$ is a diagonal block matrix, where each block is a Cartan matrix of type $\mathbf{A}_j$ for some  $j\leq k$. 
Let $\mu = (\mu_1, \ldots , \mu_\ell) $ be the sizes of each block ordered in decreasing order (here we admit repetitions). 
Since the determinant of a Cartan matrix of type $\mathbf{A}_j$ is $j+1$ we have that $\mu$ satisfies \eqref{eq:det minor}.
This proves the proposition in this case. 

Now let $i_1< i_2 <\ldots < i_k$ and $j_1< j_2<\ldots <j_k$ be the index of the rows and columns that were not eliminated in obtaining $\mathbf{B}$ from $\mathbf{A}_n$. 
By the previous paragraph we can assume that the tuples $(i_1,\dots,i_k)$ and $(j_1,\dots, j_k)$ are not identical.
It follows that we can define
\begin{equation}
u= \min \{  1\leq b \leq k \mid   i_b\neq j_b  \}. 
\end{equation}
Let $\mathbf{B}=(b_{ij})$.
We split $\mathbf{B}$ in blocks as follows 
\begin{equation}\label{eq:B_Blocks}
\mathbf{B}= \left[
\begin{array}{c|c|c}
X_{11}  &  X_{12} & X_{13}  \\ \hline
X_{21} &  b_{uu}  &  X_{23}  \\ \hline
X_{31} &  X_{32}  &  X_{33}  \\ 
\end{array}\right],
\end{equation}
where  $X_{11}$ and $X_{33}$ are square matrices of size $u-1$ and $k-u$, respectively. 
We notice that $b_{uu}=-1$ or $b_{uu}=0$. 

We first treat the case when $b_{uu}=0$. 
Let us assume that this zero was located above the diagonal in $\mathbf{A}_n$. 
In this case, we have that $X_{12}$, $X_{13}$ and $X_{23}$ are the zero matrix.
Therefore, $\det (B)=0$.
The case when the zero is located under the diagonal in $\mathbf{A}_n$ is analogous.

We now suppose that $b_{uu}=-1$. 
Let us assume that this $-1$ was located above the diagonal in $\mathbf{A}_n$. 
The case when the $-1 $ is located below the diagonal is entirely analogous and is left to the reader. 
As before, we have  that $X_{12}$, $X_{13}$ and $X_{23}$ are the zero matrix.
It follows that 
\begin{equation}
	\label{eq:det_of_B}
|\det (\mathbf{B})| = |\det (X_{11})| \cdot  |\det (X_{33})|.
\end{equation}

Note that splitting $\mathbf{A}_n$ as in \eqref{eq:B_Blocks}, we see that $X_{11}$ (resp. $X_{33}$) is a submatrix of $\mathbf{A}_{u-1}$ (resp. $\mathbf{A}_{n-u}$).
We can apply our inductive hypothesis to $X_{11} $ and $X_{33}$ to obtain partitions $\nu =  (\nu_1, \ldots , \nu_r)$ and $ \eta = (\eta_1, \ldots , \eta_s) $ such that:
\begin{enumerate}
\item  $\nu$ is a partition of $ 0\leq c \leq  u-1$;
\item  $\eta  $ is a partition of $0\leq d \leq n-u$;
\item $ r \leq u-c$   and $s\leq n-u+1-d$;
\item $  |\det (X_{11})| =  (\nu_1+1)(\nu_2+1) \cdots (\nu_r+1)  $ and $  |\det (X_{33})| =  (\eta_1+1)(\eta_2+1) \cdots (\eta_s+1)  $.
\end{enumerate}
Let $\mu$ be the partition of $m\coloneqq c+d$ that is obtained by reordering in decreasing order the composition $(\nu_1, \ldots , \nu_r,\eta_1, \ldots , \eta_s)$. 
We notice that $m\leq n-1 <n $.
On the other hand,  $\mu$ has $\ell\coloneqq r+s $ parts and $\ell \leq n+1-m$.
Thus by \eqref{eq:det_of_B}, $\mu$ satisfies \eqref{eq:det minor}. 
This proves the first claim in the proposition.

To see the second claim, we first notice that 
\begin{equation*}
(\mu_1+1)+(\mu_2+1)+\cdots+(\mu_\ell+1) = m+\ell \leq n+1.
\end{equation*}
By the integrality of the multinomial coefficients we have that $(\mu_1+1)!(\mu_2+1)! \cdots (\mu_\ell+1)!$ divides $(m+\ell)!$, which in turns divides $(n+1)!$.
Since clearly $(\mu_1+1)(\mu_2+1) \cdots (\mu_\ell+1)$ divides $(\mu_1+1)!(\mu_2+1)! \cdots (\mu_\ell+1)!$, the proposition follows.
\end{proof}

\begin{cor}\label{cor:vertices_lattice}
Let $ \lambda \in \Lambda^{+}$ be a dominant weight. 
Vertices of $ \mathsf{P}^{+}(\lambda) $ lie on $ \zfrac \Phi $.
\end{cor}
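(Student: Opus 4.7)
The plan is to combine Proposition \ref{prop:vertices_lambda} with a basis-change computation from the fundamental weight basis to the simple root basis. The key input, already established, is that every nonzero vertex of $\mathsf{P}^+(\lambda)$ can be written as $\sum_{i=1}^n a_i\varpi_i$, where each $a_i$ is a $\zfrac$-linear combination of the coordinates $\lambda_1,\ldots,\lambda_n$ of $\lambda$ in the fundamental weight basis. Because $\lambda\in\Lambda^+$, each $\lambda_j$ is an integer; hence each $a_i$ already lies in $\zfrac$.

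The second ingredient is a change of basis. For type $A_n$ one has $\varpi_i=\sum_j (A_n^{-1})_{ij}\,\alpha_j$, where $A_n$ is the Cartan matrix. Since $\det(A_n)=n+1$, Cramer's rule (or directly the well-known closed form $(A_n^{-1})_{ij}=\min(i,j)-ij/(n+1)$) shows that $(n+1)A_n^{-1}$ is an integer matrix, so each entry of $A_n^{-1}$ lies in $\mathbb{Z}[\tfrac{1}{n+1}]\subset\zfrac$. Therefore $\varpi_i\in\zfrac\cdot\Phi$ for every $i$.

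Putting these two facts together gives
\[
v \;=\; \sum_i a_i\varpi_i \;=\; \sum_j\Bigl(\sum_i a_i (A_n^{-1})_{ij}\Bigr)\alpha_j,
\]
and each coefficient $\sum_i a_i(A_n^{-1})_{ij}$ lies in $\zfrac$, as a sum of products of elements of that ring. Hence $v\in\zfrac\,\Phi$, and this handles every nonzero vertex. The vertex $\mathbf{0}$ lies in $\zfrac\,\Phi$ trivially, so the corollary follows.

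There is no real obstacle here beyond bookkeeping: all the technical content (bounding denominators by $(n+1)!$) has been absorbed into Proposition \ref{prop:vertices_lambda} and Lemma \ref{lem:cool}. The only point worth being careful about is that one must use $\lambda\in\Lambda^+$ (integer coordinates in the weight basis) rather than merely $\lambda\in C^+$, in order to convert the $\zfrac$-linear functions of $(\lambda_1,\ldots,\lambda_n)$ appearing in Proposition \ref{prop:vertices_lambda} into actual elements of $\zfrac$.
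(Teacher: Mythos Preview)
Your argument is correct and is essentially the same as the paper's: both invoke Proposition~\ref{prop:vertices_lambda} to get the vertex coordinates in $\zfrac$ in the weight basis, and then pass to the root basis via the containment $\Lambda\subset\mathbb{Z}[\tfrac{1}{n+1}]\Phi$, which you simply make explicit through the inverse Cartan matrix.
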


\begin{proof}
By \Cref{prop:vertices_lambda}, the vertices lie in $ \zfrac\Lambda $ if $\lambda$ is a weight.
Since $ \Lambda \subset \mathbb{Z}[\frac{1}{n+1}] \Phi $, we obtain the desired conclusion.
\end{proof}

% We can describe the polytope $  \mathsf{P}^{+}(\lambda)  $ from Equation \eqref{eq:P+} as an intersection of two simplicial cones $\mathsf{P}^{+}(\lambda) =  \left( \lambda - \mathbb{R}_{\geq 0}\Delta \right) \cap C^{+}.$

The following Lemma is a crucial ingredient for proving quasi-polynomiality.

\begin{lemma}\label{prop:Minkowski_sum}
Let $z, y \in C^+$ and $r\in \mathbb{R}_{\geq 0}$.
We have $\mathsf{P}^+(z)+\mathsf{P}^+(y)=\mathsf{P}^+(z + y)$, where the left-hand side denotes the Minkowski sum, and $\mathsf{P}^+(rz)=r\mathsf{P}^+(z).$
\end{lemma}

\begin{proof}
The first equality is shown in \cite[Proposition 2.13]{burrull2024strongly}, while the second one is clear.
\end{proof}

The polytopes $ \mathsf{P}_{\boldsymbol{i}}^+(\lambda) $ appearing in Equation \eqref{eq:polyhedral_zone} are defined as slices of $ \mathsf{P}^+(\lambda) $.
It will be more convenient to consider $\mathsf{P}_{\boldsymbol{i}}^+$ as faces of certain translations of $\mathsf{P}^+$.
For $\boldsymbol{i}= ( i_{1}, \dots, i_{n} )\in \{0,1,2\}^n$, we define
\begin{equation*}
\varpi(\boldsymbol{i})\coloneqq i_1\varpi_1+\cdots+i_n\varpi_n, \quad \text{and} \quad V_{\boldsymbol{i}} \coloneqq \left\{ x \in E \mid \langle x, \alpha_{j} \rangle = 0, \forall\,  j \in [n] :i_j\neq 2 \right\}.
\end{equation*}

\begin{lemma}\label{prop:vertices}
Let $\lambda\in C^+$ and let $\boldsymbol{i} \in \{0,1,2\}^n$.
Then
\begin{enumerate}[(i)]
    \item\label{item: vertices 1} $\mathsf{P}^{+}(\lambda)\cap V_{\boldsymbol{i}} $ is a face of $\mathsf{P}^{+}(\lambda)$.

    \item\label{item: vertices 2} Suppose $\lambda\in Z_\bbR(\boldsymbol{i})$.
    Then
    \begin{equation}\label{eq: translation trick}
    \mathsf{P}_{\boldsymbol{i}}^{+}(\lambda) - \varpi(\boldsymbol{i}) = \mathsf{P}^{+}(\lambda - \varpi(\boldsymbol{i})) \cap V_{\boldsymbol{i}}.
    \end{equation}
\end{enumerate}
As a consequence, for $\lambda\in Z_\bbR(\boldsymbol{i})$, the polytope $   \mathsf{P}_{\boldsymbol{i}}^{+}(\lambda)  $ has vertices in the lattice $\mathbb{Z}[\frac{1}{(n+1)!}]\Phi$.
\end{lemma}

\begin{proof}\hfill
\begin{enumerate}[(i)]
\item This follows from the inequality description in Equation \eqref{eq: P plus ineq}.

\item As $\lambda\in Z_\bbR(\boldsymbol{i})$, we have that $\lambda - \varpi(\boldsymbol{i})\in C^+$.
We start by noticing the following:
\begin{equation*}
C^+ \cap V_{\boldsymbol{i}} = Z_\bbR(\boldsymbol{i}) - \varpi(\boldsymbol{i})
\quad \text{and} \quad
\big(\mathsf{P}^+(\lambda)-\varpi(\boldsymbol{i})\big)\cap C^+ = \mathsf{P}^+(\lambda-\varpi(\boldsymbol{i})).
\end{equation*}
Both equalities follow easily by definition (for the second one, use the inequality description \eqref{eq: P plus ineq} of $\mathsf{P}^+$).
We get
\begin{align*}
\mathsf{P}_{\boldsymbol{i}}^{+}(\lambda) - \varpi(\boldsymbol{i}) &=
\big(\mathsf{P}^{+}(\lambda)\cap  Z_\bbR(\boldsymbol{i})\big) - \varpi(\boldsymbol{i})\\
&= \big(\mathsf{P}^{+}(\lambda) -\varpi(\boldsymbol{i})\big) \cap C^+\cap V_{\boldsymbol{i}}\\
&= \mathsf{P}^{+}(\lambda - \varpi(\boldsymbol{i})) \cap V_{\boldsymbol{i}},
\end{align*}
as desired.
% We prove \cref{eq: translation trick} by double inclusion, using the inequality description \eqref{eq: P plus ineq} of $\mathsf{P}^+$.
% Fix any $j\in[n]$.
% \begin{itemize}
%     \item[$\subseteq$:] Let $x\in \mathsf{P}_{\boldsymbol{i}}^{+}(\lambda)$, and write $y\coloneqq x - \varpi(\boldsymbol{i})$.
%     Since $x\in \mathsf{P}^{+}(\lambda)$, it is clear that $\langle y, \varpi_j\rangle \leq \langle \lambda-\varpi(\boldsymbol{i}), \varpi_j\rangle$.
%     On the other hand since $x\in Z_\bbR(\boldsymbol{i})$, we have that $\langle x, \alpha_j\rangle \geq i_j = \langle \varpi(\boldsymbol{i}), \alpha_j\rangle$, from which $\langle y, \alpha_j\rangle\geq 0$.
%     Therefore $y\in \mathsf{P}^{+}(\lambda - \varpi(\boldsymbol{i})).$
%     Now if $i_j\neq 2$ we have the equality $\langle x, \alpha_j\rangle = i_j$, so that $\langle y, \alpha_j\rangle=0$.
%     Thus $y\in V_{\boldsymbol{i}}$ as desired.

%     \item[$\supseteq$:] Let $y\in \mathsf{P}^{+}(\lambda - \varpi(\boldsymbol{i})) \cap V_{\boldsymbol{i}}$ and write $x\coloneqq y+\varpi(\boldsymbol{i})$.
%     By the above paragraph, it is clear that $x\in \mathsf{P}^{+}(\lambda)$ and that $\langle x,\alpha_j\rangle \geq i_j$.
%     If $i_j\neq 2$ then $\langle x,\alpha_j\rangle = 0$, so that $\langle x,\alpha_j\rangle = 0$
% \end{itemize}
\end{enumerate}

Now let $\lambda\in Z_\bbR(\boldsymbol{i})$.
Combining items \ref{item: vertices 1} and \ref{item: vertices 2}, we see that $\mathsf{P}_{\boldsymbol{i}}^{+}(\lambda) - \varpi(\boldsymbol{i})$ is a face of $\mathsf{P}^{+}(\lambda - \varpi(\boldsymbol{i}))$.
It must have vertices in $\mathbb{Z}[\frac{1}{(n+1)!}]\Phi$ by \Cref{cor:vertices_lattice}.
The last statement follows after translating by $\varpi(\boldsymbol{i}) \in \Lambda \subset \bbZ[\frac{1}{n+1}]\Phi.$
\end{proof}

\begin{example}\label{ex:explanation}
Let us take a closer look into this translation trick.
Let $ n=2 $.
Consider the element $ \lambda = 6\varpi_1 + 4\varpi_2$ in $ C^+ $ (depicted as a red dot in \Cref{fig:rational}).
The polyhedral region $\mathsf{P}^{+}_{(2,1)}(6\varpi_1 + 4\varpi_2) $ is equal to the face $ \mathsf{F} $ of the polytope $\mathsf{P}^{+}(4\varpi_1+3\varpi_2)$ defined by $ \langle \alpha_{2}, x \rangle = 0 $, translated by $\varpi(2,1)=2\varpi_1 + \varpi_2$.
See \Cref{fig:rational}.
% This explains why we defined $ \mathsf{P}^{+}(z) $ for elements $ z $ that are not necessarily in $ C^+ $.
\end{example}

\begin{figure}[h]
	\centering
\begin{tikzpicture}[scale=0.5,x={(60:1)}, y={(120:2)}]

\begin{scope}[xshift=-6cm]
	\foreach \i in {0,1,...,11} {
  \foreach \j in {0,1,...,11} {
    \ifnum\numexpr\i+\j<11
      \fill (\i,\j) circle (2pt);
    \fi
  }
}

\fill[red] (6,4) circle (2.5pt);
\fill[black, fill opacity = 0.1] (0,7) -- (6,4) -- (8,0) -- (0,0) -- cycle;
\draw[thick, blue] (2,1) -- (7.5,1) ;

\draw[ultra thick, red, ->, >=stealth] (0,0) -- (1,0);
\draw[ultra thick, red, ->, >=stealth] (0,0) -- (0,1);
\node[right] at (1,0) {$\varpi_1$};
\node[left] at (0,1) {$\varpi_2$};
\end{scope}

\begin{scope}[xshift=6cm]

	\foreach \i in {0,1,...,11} {
  \foreach \j in {0,1,...,11} {
    \ifnum\numexpr\i+\j<11
      \fill (\i,\j) circle (2pt);
    \fi
  }
}

\fill[red] (4,3) circle (2.5pt);
\fill[black, fill opacity = 0.1] (0,5) -- (4,3) -- (5.5,0) -- (0,0) -- cycle;
\draw[thick, blue] (0,0) -- (5.5,0) ;
\draw[ultra thick, red, ->, >=stealth] (0,0) -- (1,0);
\draw[ultra thick, red, ->, >=stealth] (0,0) -- (0,1);
\node[right] at (1,0) {$\varpi_1$};
\node[left] at (0,1) {$\varpi_2$};
\end{scope}
\end{tikzpicture}
\caption{The translation trick.
The polytopal zone $\mathsf{P}^+_{(2,1)}(6\varpi_1 + 4\varpi_2)$ is a translation of a face of the polytope $\mathsf{P}^+(4\varpi_1+3\varpi_2)$.}
\label{fig:rational}
\end{figure}

\subsection{Quasipolynomiality}

\begin{definition}\label{def:quasi_polynomial}

A function $ q: \mathbb{N}^n \to \mathbb{Z} $ is a \textbf{quasi-polynomial} of degree $d$ if there exists a lattice (a discrete subgroup) $ \Gamma $ of rank $ n $ such that $\Gamma \subset \mathbb{Z}^{n} $ and
cosets $ g_{1} + \mathbb{Z}^{n}, \dots, g_{M} + \mathbb{Z}^{n} $ together with polynomial functions $ p_{1}, \dots, p_{M} $ all of degree $d$, such that $ q(\gamma) = p_k(\gamma)$ whenever $ \gamma \in g_k  + \mathbb{Z}^{n}$.
\end{definition}

Quasi-polynomials appear in many different contexts, see \cite{woods2014unreasonable} for a survey.
If $ q $ is a quasi-polynomial then its generating function has the following rather simple description
\begin{equation}\label{eq:quasi-genfun}
\sum_{\mathbf{n}\in \mathbb{N}^{n}} q(\mathbf{n})X^{\mathbf{n}} = \frac{p(X)}{(1-X^{\mathbf{b}_{1}})\dots (1-X^{\mathbf{b}_{k}})},
\end{equation}
where $ p(X) $ is a polynomial in $ \mathbb{Q}[X ]$ and $ \mathbf{b}_{i} \in \mathbb{N}^{n}\setminus \left\{ \mathbf{0} \right\} $.

\begin{example}\label{ex:floor}
Let $\mathsf{P} = [0,1/2]$ be a 1-dimensional polytope.
The function $q: \mathbb{N} \to \mathbb{Z}$ defined as $q(n) = |nP \cap \mathbb{Z}|$ is the quasi-polynomial given by $\lfloor n/2 \rfloor + 1$.
This function has two different polynomial expressions in the cosets $2\mathbb{Z}$ and $1+2\mathbb{Z}$.
In this case the generating function is
$
\sum_{n\geq 0} (\lfloor n/2 \rfloor + 1)X^n = \frac{1+X}{(1-X^2)^2}.  
$
\end{example}

The main technical result we use is \cite[Theorem 7]{mcmullen1978lattice}, which we restate here in a slightly different form.

\begin{theorem}[McMullen]\label{thm:technical_tool}
Let $E $ be a real vector space of dimension $ n $, $ \Gamma $ a lattice, and $ \mathsf{P}_{1}, \dots, \mathsf{P}_{k} $ be polytopes in $ E $ with vertices in $ \Gamma \otimes_{\mathbb{Z}} \mathbb{Q} $.
Then we have that the function $ q: \mathbb{N}^{k} \to \mathbb{Z} $ defined as
\begin{equation}
q( \lambda_{1}, \dots, \lambda_{k}) = | \left( \lambda_{1}\mathsf{P}_{1} + \cdots + \lambda_{k}\mathsf{P}_{k} \right) \cap \Gamma |,
\end{equation}
is a quasi-polynomial of degree at most $ n $ over the subgroup $ \bigoplus_{i=1}^k N_{i}\mathbb{Z} \subseteq \mathbb{Z}^{k} $ where
\begin{equation}\label{eq:period}
N_i := \min \left\{ m_{i} \in \mathbb{N} ~:~ m_i \mathsf{P}_{i} \text{ has vertices in } \Gamma \right\}
\end{equation}
Furthermore, let $q^{\mathsf{top}}$ be the top homogeneous component of $q$.
Then $q^{\mathsf{top}}$ is a polynomial, and it satisfies the following relation
\begin{equation}\label{eq:volume_polynomial}
\relvol \left( \lambda_{1}\mathsf{P}_{1} + \cdots +\lambda_{k}\mathsf{P}_{k} \right)  = q^{\mathsf{top}}( \lambda_{1}, \dots, \lambda_{k}).
\end{equation}
\end{theorem}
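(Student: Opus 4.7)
The plan is to combine a combinatorial stratification of the parameter space $\mathbb{N}^k$ with Brion's formula for generating functions of lattice points in polyhedra, and then extract both the quasi-polynomiality and the identification of the top homogeneous part with the volume. The key structural observation is that, as $\mu = (\mu_1, \ldots, \mu_k) \in \mathbb{R}^k_{\geq 0}$ varies, the combinatorial type of $P(\mu) := \mu_1 \mathsf{P}_1 + \cdots + \mu_k \mathsf{P}_k$ is locally constant on the maximal cones (``chambers'') of the common refinement of the normal fans of the $\mathsf{P}_i$; inside each such chamber $\mathcal{C}$, the vertices of $P(\mu)$ are indexed by a fixed set of labels $J$, with $v_J(\mu) = \sum_i \mu_i v_{i,J}$ a linear function of $\mu$, and each tangent cone has a fixed set of rational edge directions $u_{J,1}, \ldots, u_{J,n}$.

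By Brion's theorem I would expand
\[
\sum_{x \in P(\mu) \cap \Gamma} e^{\langle x, s \rangle} \;=\; \sum_J e^{\langle v_J(\mu), s \rangle} \, \phi_J(s),
\]
where $\phi_J(s)$ is the generating function of $\Gamma$-lattice points in the (half-open) tangent cone at $v_J(\mu)$. Each $\phi_J(s)$ is a rational function with denominator $\prod_{j=1}^n (1 - e^{\langle u_{J,j}, s\rangle})$, and its numerator, obtained from a fundamental parallelepiped of the integral cone spanned by the $u_{J,j}$, depends only on the chamber $\mathcal{C}$ and not on $\mu$.

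Extracting the lattice point count corresponds to the limit $s \to 0$. Using the Todd expansions of the denominators together with the Taylor expansion of $e^{\langle v_J(\mu), s\rangle}$, the individual poles cancel across the sum over $J$, and one is left, within each chamber $\mathcal{C}$, with a polynomial in $\mu$ of degree at most $n$. The fact that each $u_{J,j}$ is rational — an edge direction inherited from $\mathsf{P}_i$ being a $\mathbb{Q}$-rational vector whose denominator divides $N_i$ — forces the period to divide $\bigoplus_i N_i \mathbb{Z}$, producing the claimed quasi-polynomial. The top-degree term comes from the highest-order Taylor coefficients $\langle v_J(\mu), s\rangle^n / n!$ and, summed over $J$, reproduces the continuous integral $\int_{P(\mu)} 1 \, dx = \vol(P(\mu))$; by Minkowski's theorem on mixed volumes this is a homogeneous polynomial of degree $n$ in $\mu$, which is precisely \eqref{eq:volume_polynomial}.

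The main obstacle is assembling the chamber-wise polynomial expressions into a single global quasi-polynomial on $\mathbb{N}^k$, and tracking the period precisely. A priori each chamber only produces a polynomial valid there, and one must argue that these local polynomials (for a fixed coset modulo the period lattice) actually coincide across chamber walls. McMullen's original treatment resolves this through his calculus of weight valuations on the polytope algebra; a more hands-on alternative is to apply Barvinok's signed decomposition of rational cones into unimodular ones, after which each $\phi_J$ becomes a short signed sum of geometric series and the quasi-polynomial structure — together with the precise form of the period — becomes manifest.
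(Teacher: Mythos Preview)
The paper does not prove this theorem. It is stated as a result of McMullen and attributed to \cite{mcmullen1978lattice}; the only commentary the paper adds is that (i) the $N_i$ here are chosen as convenient multiples of McMullen's original periods, and (ii) the identification of the top homogeneous component with the volume polynomial is read off from the integral case via \cite[Theorem~19.3]{postnikov2009permutohedra}. So there is no in-paper proof to compare against; the paper treats this as a black box from the Ehrhart/valuation literature.

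Your sketch via Brion's formula and Barvinok decomposition is a legitimate modern route to this result, genuinely different from McMullen's original valuation-theoretic argument. Two points are worth tightening. First, the ``chamber'' picture is overcomplicated: for $\mu$ in the open positive orthant, the normal fan of $\mu_1\mathsf{P}_1+\cdots+\mu_k\mathsf{P}_k$ is always the common refinement of the normal fans of the $\mathsf{P}_i$, so the combinatorial type is constant there, and the vertices $v_J(\mu)=\sum_i\mu_i v_{i,J}$ are globally linear in $\mu$; the only degenerations occur on coordinate hyperplanes. The obstacle you name at the end (patching across chambers) therefore essentially does not arise. Second, the source of the period is the \emph{vertices}, not the edge directions: the edge directions of each tangent cone can be taken primitive in $\Gamma$, but the numerator in Brion's formula depends on $v_J(\mu)\bmod\Gamma$, and since each $v_{i,J}\in\frac{1}{N_i}\Gamma$ this residue depends only on $\mu\bmod\bigoplus_i N_i\mathbb{Z}$. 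With those corrections your outline would go through; the volume identification via the leading Taylor term and Minkowski's theorem on mixed volumes is fine.
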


The left-hand side of \cref{eq:volume_polynomial} refers to the \emph{relative volume} of a polytope with integral vertices but perhaps contained in a linear subspace $L$.
The relative volume is a measure on $L$ normalized so that the fundamental parallelepiped of the lattice $\Gamma \cap L$, in $L$, has relative volume equal to one.
A priori, $q^{\mathsf{top}}$ is merely a quasi-polynomial.
It is a classical result that the relative volume of a Minkowski sum is a polynomial, as in \cref{eq:volume_polynomial} (cf. \cite[Chapter 4]{ewald1996combinatorial}).
The last part on the homogeneous top component can be easily deduced from the integral version \cite[Theorem A.3]{postnikov2009permutohedra}.
This in particular implies that if $ \sum \lambda_{i} \mathsf{P}_{i} $ is not full dimensional, then the quasi-polynomial has degree strictly lower than $ n $.
The statement in \cite{mcmullen1978lattice} uses a different definition of $ N_{i} $, and we chose this one (which is a multiple of the original) for simplicity.

We now have all the key ingredients to prove the next proposition.

\begin{prop}\label{prop:quasi_polinomiality}
Let $ \boldsymbol{i} \in \left\{ 0,1,2 \right\}^{n} $ and write $\bbN_{\boldsymbol{i}}\coloneqq \prod_{j=1}^n \bbN_{\geq i_j}$.
The function $ q_{\boldsymbol{i}}: \bbN_{\boldsymbol{i}} \to \mathbb{Z} $ given by
\[
q_{\boldsymbol{i}}( \lambda_{1}, \dots, \lambda_{n}) = | \mathsf{P}^+_{\boldsymbol{i}}(\lambda) \cap  \left( \mathbb{Z}\Phi + \lambda \right) |, \quad \text{where} \quad \lambda = \sum_{i=1}^n \lambda_i \varpi_{i},
\]
is a quasi-polynomial in the coordinates $\lambda_1, \dots, \lambda_n$, that is, when both $\lambda$ and $\lambda-\varpi(\boldsymbol{i})$ are dominant weights.
Furthermore, $q^{\mathsf{top}}_{\boldsymbol{i}}( \lambda_{1}, \dots, \lambda_{n}) = \relvol \left( \mathsf{P}^+_{\boldsymbol{i}}(\lambda)\right)$.
\end{prop}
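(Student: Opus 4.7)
My plan is to derive the proposition from McMullen's Theorem \ref{thm:technical_tool} applied to an explicit Minkowski decomposition of the polytopes $\mathsf{P}^+_{\boldsymbol{i}}(\lambda)$.

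First, since the lattice $\mathbb{Z}\Phi+\lambda$ depends only on the coset $[\lambda] \in \Lambda/\mathbb{Z}\Phi$, which is finite, I would partition $\mathbb{N}^n$ into the finitely many cosets of the sublattice corresponding to $\mathbb{Z}\Phi \subset \Lambda$. On each coset the relevant lattice is a fixed translate of $\mathbb{Z}\Phi$, and quasi-polynomial pieces on each coset will assemble into a quasi-polynomial on $\mathbb{N}^n$ in the sense of \Cref{def:quasi_polynomial}.

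Second, combining the translation formula \eqref{eq:translation_trick}, the Minkowski decomposition $\mathsf{P}^+(z) = \sum_j z_j \mathsf{P}^+(\varpi_j)$ from \Cref{lem:Minkowski_sum}, and the standard fact that a face of a Minkowski sum is the Minkowski sum of the corresponding faces of the summands, I would obtain, for $\lambda$ with $\lambda_j \geq i_j$ for all $j$,
\begin{equation*}
\mathsf{P}^+_{\boldsymbol{i}}(\lambda) = \nu(\boldsymbol{i}) + \sum_{j=1}^n (\lambda_j - i_j)\, F_j, \qquad F_j := \mathsf{P}^+(\varpi_j) \cap V_{\boldsymbol{i}}.
\end{equation*}
Each $F_j$ is a fixed polytope whose vertices lie in $\mathbb{Z}[\tfrac{1}{(n+1)!}]\Phi$ by \Cref{prop:vertices} (applied to suitable translates), so $(n+1)!\, F_j$ has vertices in $\mathbb{Z}\Phi$.

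Third, applying \Cref{thm:technical_tool} to the polytopes $F_1, \ldots, F_n$ (suitably rescaled) and to the translated lattice yields that, on each coset, $L_{\boldsymbol{i}}$ agrees with a polynomial of degree at most $n$ in $(\lambda_j - i_j)_j$, hence in $(\lambda_j)_j$. The same theorem simultaneously identifies the top-degree homogeneous component as the volume polynomial $\vol\bigl(\sum_j (\lambda_j - i_j) F_j\bigr) = \vol\bigl(\mathsf{P}^+_{\boldsymbol{i}}(\lambda)\bigr)$, yielding the second claim. For $\lambda$ in the non-empty region with some $\lambda_j < i_j$, I would invoke \Cref{rem:y_out} to replace $\lambda - \nu(\boldsymbol{i})$ by a dominant element $y$ with $\mathsf{P}^+(y) = \mathsf{P}^+(\lambda - \nu(\boldsymbol{i}))$, and use the deformation-cone structure of \Cref{prop:deformations} to check that the same quasi-polynomial continues to agree with $L_{\boldsymbol{i}}$ there.

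The main obstacle is the face-of-Minkowski-sum identity in the second step, particularly handling possibly degenerate summands (where some $F_j$ is a single point, or where a coefficient $\lambda_j - i_j$ equals zero). A related subtlety is verifying that the single quasi-polynomial produced in the easy region $\{\lambda_j \geq i_j\}$ continues to compute $L_{\boldsymbol{i}}$ on the boundary region where $\mathsf{P}^+_{\boldsymbol{i}}(\lambda)$ is still non-empty but $\lambda_j < i_j$ for some $j$; this requires the chamber structure coming from \Cref{prop:deformations} to be compatible with the McMullen quasi-polynomial, which I expect to handle by carefully tracking which faces of $\mathsf{P}^+(\varpi_j)$ are picked out in each chamber.
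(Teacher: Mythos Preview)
Your proposal is correct and follows essentially the same route as the paper: the translation trick \eqref{eq:translation_trick}, the Minkowski decomposition $\mathsf{P}^+(\lambda-\nu(\boldsymbol{i}))=\sum_j\mu_j\,\mathsf{P}^+(\varpi_j)$, summand-wise intersection with $V_{\boldsymbol{i}}$, and then McMullen's \Cref{thm:technical_tool}. The only cosmetic difference is in handling the shifted lattice $\mathbb{Z}\Phi+\lambda$: the paper absorbs the translation by $-(\lambda-\nu(\boldsymbol{i}))$ into the summands, replacing each $F_j=\mathsf{P}^+(\varpi_j)\cap V_{\boldsymbol{i}}$ by $F_j-\varpi_j$ and counting directly in $\mathbb{Z}\Phi$, which sidesteps your coset partition; the boundary region where $\lambda-\nu(\boldsymbol{i})\notin C^+$ is dispatched via \Cref{rem:y_out} exactly as you anticipate.
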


\begin{proof}
Let $\mu\coloneqq\lambda-\varpi(\boldsymbol{i})$.
We have $\mu=\sum_{i=1}^n\mu_i\varpi_i$, with $\mu_j=\lambda_j-i_j$.
Note that \Cref{prop:vertices} gives
\begin{equation}\label{eq:translation trick prop}
\mathsf{P}^+_{\boldsymbol{i}}(\lambda) - \lambda = 
\big(\mathsf{P}^+(\mu)\cap V_{\boldsymbol{i}}\big) - \mu.
\end{equation}
Therefore, 
\begin{equation}\label{eq: mu dominant}
\big(\mathsf{P}^+(\mu)\cap V_{\boldsymbol{i}}\big) - \mu =
\Big[\Big(\sum_{i=1}^n\mu_i\mathsf{P}^+(\varpi_i)\Big)\cap V_{\boldsymbol{i}}\Big] - \mu =
\sum_{i=1}^n\mu_i\big[\big( \mathsf{P}^+(\varpi_i)\cap V_{\boldsymbol{i}}\big) - \varpi_i\big],
\end{equation}
where the first equality follows from \Cref{prop:Minkowski_sum}, while the second one is easily derived from the definition of $V_{\boldsymbol{i}}$.
On the other hand, by \cref{eq:translation trick prop}, we get
\begin{equation}\label{eq:Li translation trick}
q_{\boldsymbol{i}}( \lambda_{1}, \dots, \lambda_{n}) =
\big| \left(\mathsf{P}^+_{\boldsymbol{i}}(\lambda) - \lambda \right)\cap  \mathbb{Z}\Phi \big|
= \big| \left[\big(\mathsf{P}^+(\mu)\cap V_{\boldsymbol{i}}\big) - \mu \right]\cap  \mathbb{Z}\Phi \big|.
\end{equation}
Since $\mathsf{P}^+(\varpi_i)\cap V_{\boldsymbol{i}}$ is a face of $\mathsf{P}^+(\varpi_i)$ (cf. \Cref{prop:vertices}), the dilation $(n+1)!\mathsf{P}^+(\varpi_i)\cap V_{\boldsymbol{i}}$ has vertices in $\bbZ\Phi$, by \Cref{cor:vertices_lattice}.
Therefore, substituting \eqref{eq: mu dominant} in \eqref{eq:Li translation trick} and applying \Cref{thm:technical_tool}, we get that the function $q_{\boldsymbol{i}}( \lambda_{1}, \dots, \lambda_{n})$ is a quasi-polynomial in the variables $\mu_1,\ldots,\mu_n$, and that its top homogeneous component is
\begin{equation*}
\relvol \left( \mathsf{P}^+_{\boldsymbol{i}}(\lambda) - \lambda\right) = \relvol \left( \mathsf{P}^+_{\boldsymbol{i}}(\lambda)\right).
\end{equation*}
After performing the change of variables from $\mu_i$ to $\lambda_i$, we get that $q_{\boldsymbol{i}}$ is also quasipolynomial in these variables, and its top homogeneous component remains unaltered as $\mu_i$ and $\lambda_i$ differ only by a constant.
\end{proof}

Now we come to the crux of the section.
Recall that we proved \cite[Theorem A]{castillo2023size} that the size of the lower Bruhat interval $\li{\lambda}$ for $a=\text{id}$ is equal to $(n+1)!|\mathsf{P}(\lambda)\cap (\mathbb{Z}\Phi + \lambda)|$, which agrees with a polynomial function on $\lambda$.
The following theorem establishes a weaker result, concluding only quasi-polynomiality. 
However, it offers greater generality, as it holds for general $a\in\mathcal{F}$ and for weights deep enough in the dominant region.
Let $\boldsymbol{2}\coloneqq(2,\ldots , 2) \in \{0,1,2\}^n$.
In the notation of \Cref{prop:quasi_polinomiality}, recall that $\bbN_{\boldsymbol{2}}$ denotes the $n$-tuples of integers $(\lambda_1,\ldots,\lambda_n)$ with all $\lambda_i\geq2$.

\begin{theorem}\label{thm:lattice_count}
Let $a\in \mathcal{F}$.
The function $q_a:\bbN_{\boldsymbol{2}}\to \mathbb{Z}$ given by
\begin{equation*}
q_a(\lambda_1,\ldots,\lambda_n) = \big| \mathcal{I}_a(\lambda)  \big|,
\quad \text{where} \quad
\lambda = \sum_{i=1}^n \lambda_i \varpi_{i},
\end{equation*}
is a quasi-polynomial in $\lambda_1,\ldots,\lambda_n$, that is, when $\lambda\in Z(\boldsymbol{2})$.
Furthermore, it is of degree $n$ and its top homogeneous component is given by $$q^{\mathsf{top}}_a( \lambda_{1}, \dots, \lambda_{n}) = (n+1)!\relvol \left( \mathsf{P}(\lambda)\right).$$
\end{theorem}

\begin{proof}
We recall Equation \eqref{eq:weighted_lattice_count}:
\[
|\li{\lambda}| = \sum_{\boldsymbol{i} \in \left\{ 0,1,2 \right\}^{n}} c_a(\boldsymbol{i}) |\mathsf{P}_{\boldsymbol{i}}^+(\lambda) \cap \left( \mathbb{Z}\Phi + \lambda \right) |.
\]
Since $\bbN_{\boldsymbol{2}} \subset \bbN_{\boldsymbol{i}}$ for all $\boldsymbol{i}$, each $\mathsf{P}_{\boldsymbol{i}}^+(\lambda)$ is non-empty.
\Cref{prop:quasi_polinomiality} implies that the right-hand side agrees with a quasi-polynomial in $\lambda_1,\ldots,\lambda_n$ and so the left-hand side does too.
Note that \( \dim(\mathsf{P}^+_{\boldsymbol{2}}(\lambda)) = n \), whereas in all other cases, the dimension is strictly lower.
As the top homogeneous component is contributed by polytopes of maximal dimension, we get
\begin{equation}
q^{\mathsf{top}}_a(\lambda_1,\ldots,\lambda_n) = c_a(\boldsymbol{2}) |\mathsf{P}_{\boldsymbol{2}}^+(\lambda) \cap \left( \mathbb{Z}\Phi + \lambda \right) |
= (n+1)!^2\relvol(\mathsf{P}_{\boldsymbol{2}}^+(\lambda)),
\end{equation}
where the second equality follows from Propositions \ref{prop:quasi_polinomiality} and \ref{prop:BarquitoGenerico} (see \cref{eq: cardinalities}).

The polytope \( \mathsf{P}(\lambda) \) from \cref{eq:P_definition} can be subdivided into \( (n+1)! 
\) regions, each isometric to \( \mathsf{P}^{+}(\lambda) \). 
Since $\lambda-\varpi(\boldsymbol{2})$ is dominant, we have
\[
(n+1)! \relvol(\mathsf{P}_{\boldsymbol{2}}^+(\lambda)) = (n+1)! \relvol(\mathsf{P}^+(\lambda -\varpi(\boldsymbol{2}))) = \relvol(\mathsf{P}(\lambda - \varpi(\boldsymbol{2}))),
\]  
where the first equality follows from \Cref{prop:vertices}, as $V_{\boldsymbol{2}}=E$.

Finally observe that as polynomials in $\lambda_1,\ldots,\lambda_n$, the top homogeneous component of $ \relvol(\mathsf{P}(\lambda - \varpi(\boldsymbol{2})))$ is precisely $\relvol(\mathsf{P}(\lambda))$, which is a homogeneous polynomial of degree $n$.
\end{proof}

\begin{remark}\label{rem:why_2+}
There are elements $\lambda\in C^+\setminus Z_\bbR(\boldsymbol{2)}$ satisfying the condition that $ \mathsf{P}^{+}_{\boldsymbol{2}}(\lambda) $ is non-empty.
However, our methods do not apply to such $ \lambda $ since $  \mathsf{P}^{+}_{\boldsymbol{2}}(\lambda)  $ may fail to be a translation of a polytope of the form $  \mathsf{P}^{+} $.
Hence we don't have the same Minkowski decomposition as in \Cref{prop:Minkowski_sum} to apply \Cref{thm:technical_tool}.
An example of this case happens with $ n=3 $ and the polytope $ \mathsf{P}^{+}_{\boldsymbol{2}}(3\varpi_1 + \varpi_2 + 3\varpi_3) $.
\end{remark}

\begin{remark}
In \Cref{thm:lattice_count} we use relative volume polynomials, whereas in \Cref{sec: geoforA3} we use volume polynomials.
For type $A_n$ and a full-dimensional polytope in $\Lambda \otimes \mathbb{R}$, they differ by a constant: $\sqrt{n+1}\relvol = \vol$.
This comes from the fact that the fundamental parallelogram of the weight lattice $\Lambda$ has euclidean volume $\sqrt{n+1}$, see \cite[Section 2.3.1]{castillo2023size}.
\end{remark}

The quasi-polynomial obtained in \Cref{thm:lattice_count} involves a large number of cosets.
We believe that this is only a consequence of the methods used and that actually the sizes of the lower Bruhat intervals are polynomials.
Here is a weaker version of \Cref{conj: pols geom}.

\begin{conjecture}\label{conj:miracle}
Let $\lambda = \sum_i \lambda_i \varpi_i \in \Lambda^+$ be a dominant weight and let $a \in \mathcal{F}$.
Then $|\mathcal{I}_a(\lambda)|$ is a \textbf{polynomial} in $\lambda_1, \dots, \lambda_n$ of degree $n$.
\end{conjecture}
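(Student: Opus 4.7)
The plan is to combine the criterion from Corollary~\ref{cor: RecurrenceImpliesLESS} with a polynomial ansatz motivated by Conjecture~\ref{conj: pols geom}. Specifically, I would look for real coefficients $\{\mu_{J,a}\}_{J \subset S_\mathrm{f}}$ such that the polynomial
\[
F_a(\lambda) \;:=\; \sum_{J \subset S_\mathrm{f}} \mu_{J,a}\,V_J(\lambda),
\]
of total degree $n$ in $\lambda_1,\dots,\lambda_n$, satisfies
\[
c_a(\lambda) \;=\; \sum_{J \subset \Phi^+(\lambda)} (-1)^{|J|}\,F_a(\lambda - \vee_J), \qquad \forall\,\lambda\in\Lambda^+.
\]
By Corollary~\ref{cor: RecurrenceImpliesLESS} this would yield $|\mathcal{I}_a(\lambda)| = F_a(\lambda)$ for every dominant $\lambda$, proving Conjecture~\ref{conj:miracle} (and in fact the stronger Conjecture~\ref{conj: pols geom} restricted to $W_\mathrm{aff}^+$).

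First I would exploit the zone structure of $\mathcal{Z}$: by Theorem~\ref{thm: paper boat} and Remark~\ref{deltaphi}, both $c_a(\lambda)$ and $\Phi^+(\lambda)$ depend only on the zone $Z(\boldsymbol{i}) \ni \lambda$. The desired identity then splits into $3^n$ sub-identities, one per $\boldsymbol{i} \in \{0,1,2\}^n$, each of which asserts that the polynomial $P_{\boldsymbol{i}}(\lambda) := \sum_{J \subset \Phi^+(\boldsymbol{i})} (-1)^{|J|} F_a(\lambda - \vee_J)$ is identically equal to the constant $c_a(\boldsymbol{i})$ on the affine subspace spanned by $Z(\boldsymbol{i})$. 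Expanding $F_a(\lambda-\vee_J)$ in the basis $\{V_J(\lambda)\}$ converts this into a finite linear system in the unknowns $\mu_{J,a}$.

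Next I would try to produce a solution by induction on degree. The top-degree coefficient is already forced by Theorem~\ref{thm:lattice_count}: one must have $\mu_{S_\mathrm{f},a} = (n+1)!$ independently of $a$, since the degree-$n$ part of $|\mathcal{I}_a(\lambda)|$ coincides with $(n+1)!\cdot\mathrm{vol}(\mathsf{P}(\lambda))$. The lower-degree strata of the $\mu_{J,a}$ would then be pinned down recursively by demanding that, inside each zone, the successive homogeneous components of $P_{\boldsymbol{i}}(\lambda)-c_a(\boldsymbol{i})$ vanish. The explicit computations of Theorem~\ref{thm: pols A3} in $\widetilde{A}_3$ exemplify this procedure and provide strong evidence that the resulting linear system always admits a solution.

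The main obstacle is to establish this consistency in closed form for arbitrary $n$. I see two promising avenues. On the one hand, one may look for an explicit combinatorial formula expressing each $\mu_{J,a}$ as a $\mathbb{Q}$-linear combination of the Paper Boat sizes $c_a(Z)$; the required compatibilities would then translate into hidden algebraic relations among the $c_a(Z)$, exactly as hinted after Theorem~\ref{thmD}. On the other hand, one may recast the problem through the generating series $\sum_{\lambda\in\Lambda^+} |\mathcal{I}_a(\lambda)|\,X^{\lambda}$ and use the quasi-polynomial description of each $L_{\boldsymbol{i}}$ from Proposition~\ref{prop:quasi_polinomiality} together with \eqref{eq:weighted_lattice_count} to show that the non-trivial periods of the individual $L_{\boldsymbol{i}}$ cancel out when weighted by $c_a(\boldsymbol{i})$. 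A clean cancellation of this kind would simultaneously prove polynomiality and clarify the geometric origin of Conjecture~\ref{conj: pols geom}.
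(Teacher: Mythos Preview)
The statement you are attempting to prove is \emph{Conjecture}~\ref{conj:miracle}; the paper does not prove it. It is verified there only for $n=1,2,3$, and the paper explicitly notes (in the discussion following the conjecture) that from the viewpoint of Section~\ref{sec: latticepoints} it ``feels truly miraculous'' and that ``there are strong numerical relations between the $c_a(\boldsymbol{i})$ that are yet to be discovered.''

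Your proposal is not a proof but a strategy, and you acknowledge this yourself (``The main obstacle is to establish this consistency in closed form for arbitrary $n$''). The approach you outline---fix an ansatz $F_a(\lambda)=\sum_J \mu_{J,a} V_J(\lambda)$, reduce via Corollary~\ref{cor: RecurrenceImpliesLESS} to a finite linear system over the zones, and solve for the $\mu_{J,a}$---is exactly the method the paper uses to establish the $\widetilde{A}_3$ case in Theorem~\ref{thm: pols A3}. So your plan is sound as a verification scheme for any fixed $n$, but it does not advance the general case beyond what the paper already does.

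The two avenues you suggest at the end are both reasonable but remain speculative. The first (a closed formula for $\mu_{J,a}$ in terms of the $c_a(Z)$) is precisely the ``hidden relations'' the paper alludes to without resolving. The second (cancellation of quasi-polynomial periods in the generating series) is the same phenomenon the paper flags as unexplained. Neither avenue, as written, contains a new idea that would close the gap; you have restated the open problem rather than solved it.
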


\Cref{conj:miracle} has been verified for $n = 1,2,3$.
From the point of view of this section it feels truly miraculous;
\Cref{thm:lattice_count} writes the formula as a combination of genuine quasi-polynomials.
But then, the combination using the paper boat coefficients $c_a(\boldsymbol{i})$ apparently make all the cases collapse into a single one.
This means that there are strong numerical relations between the $c_a(\boldsymbol{i})$ that are yet to be discovered.

We end by mentioning other instances of this phenomemon of rational polytopes having an Ehrhart polynomial (as opposed to quasi-polynomial).
There is the example of the \emph{stretched Littlewood-Richardson poynomials} shown by \cite{rassart2004polynomiality}, where the relevant polynomial is a priori a quasi-polynomial but the author proves that there is a single case.
More examples of this phenomenon can be found in \cite{haase2008quasi}.

\section{A criterion for Bruhat order in affine type A} \label{sec: criterio}
The main goal of this section is to establish the criterion for the Bruhat order presented in \Cref{prop: main tool}. To do so, we first prove a generalized version of the criterion applicable to the entire affine Weyl group. Subsequently, we demonstrate that this generalized criterion simplifies significantly when comparing two elements within the dominant chamber.

In this section,  we use the realization of a root system of type $A$ introduced at the beginning of \Cref{sec: main results}. 
We recall that the vertices of the fundamental alcove are $V(\mathrm{id})=\{\mathbf{0}=\varpi_0,-\varpi_1,\ldots,-\varpi_n\}$.

\subsection{Bruhat order in terms of dominance order}
\subsubsection{The group \texorpdfstring{$S_{n+1} \ltimes \mathbb{Z}\Phi'$}{Sn x ZPhi'}}
\label{ssec:realization}
Let $S_{n+1}$ be the symmetric group on $n+1$ letters. It acts on the left on $\mathbb{Z}^{n+1}$ by permutation of coordinates, \ie  if $w \in S_{n+1}$ and $(\mu_1,\ldots, \mu_{n+1})\in \mathbb{Z}^{n+1},$ we have 
$w\cdot (\mu_1,\ldots, \mu_{n+1})= (\mu_{w^{-1}(1)},\ldots, \mu_{w^{-1}(n+1)})$.
Let us define (\Cref{dictionary} will clarify the reasoning behind our choice of notation)
\begin{align*}
\Lambda'   \displaystyle &:=\{(\mu_1,\ldots, \mu_{n+1})\in \mathbb{Z}^{n+1} \mid  0\leq \sum_{i=1}^{n+1}\mu_i \leq n \},   \\
\mathbb{Z}\Phi'&:=\{(\mu_1,\ldots, \mu_{n+1})\in \mathbb{Z}^{n+1}\ \vert\  \sum_{i=1}^{n+1}\mu_i=0\}\subset \Lambda'.
\end{align*}

For $ i \in \{ 1, 2, \ldots, n \} $, define $ \alpha'_i := (0, \ldots, 0, 1, -1, 0, \ldots, 0) \in \mathbb{Z}\Phi' $, where the $ 1 $ appears in the $ i $-th position.
Similarly, we define $ \varpi'_i := (1, \ldots, 1, 0, \ldots, 0) \in \bbZ^{n+1} $, where the first $ i $ entries are equal to $ 1 $, and the remaining $ n+1 - i $ entries are equal to $ 0 $.
We note that $\varpi'_i\in\Lambda'$ for $0\leq i\leq n$.

Define the partial order $\preceq$ on $\mathbb{Z}^{n+1}$ as follows: $(\mu_1, \ldots, \mu_{n+1}) \preceq (\mu'_1, \ldots, \mu'_{n+1})$ if and only if 
\begin{equation}\label{eq: Zn order}
\displaystyle  \sum_{i=1}^{n+1} \mu_i = \sum_{i=1}^{n+1} \mu'_i 
\quad \text{ and } \quad  \sum_{i=1}^m \mu_i \leq \sum_{i =1}^m \mu'_i,  \qquad   \text{ for all } m = 1, \ldots, n+1.
\end{equation}

We say $\mu = (\mu_1, \ldots, \mu_{n+1})$ is \emph{dominant} if $\mu_1 \geq \cdots \geq \mu_{n+1}$. We denote by $\mu_{\text{dom}}$ the unique dominant element in the $S_{n+1}$-orbit of $\mu$. 
We notice that $\mu_{\text{dom}}$ satisfies    $w\cdot \mu \preceq \mu_{\text{dom}} $ for all $w\in S_{n+1}$.

For $i\in \{1,2,\ldots, n\}$, let $s_i\in S_{n+1}$ be the transposition exchanging $i$ and $i+1$.
It is well known that the map $h:S_{n+1}\to W_\mathrm{f}$  given by $h(s_i)= s_{\alpha_i}$ for all $i\in \{1,2,\ldots, n\}$ is a group isomorphism.
For $w\in S_{n+1}$ and $\lambda\in\Lambda$, we have an action $w\cdot\lambda := h(w)\cdot \lambda = h(w)(\lambda)$.
The proof of the following lemma is straightforward and is therefore omitted.

\begin{lemma}\label{dictionary} Consider the  map  $g:\mathbb{Z}^{n+1} \rightarrow \Lambda$ given by $\displaystyle g(\mu_1,\ldots, \mu_{n+1})= \sum_{i=1}^{n}(\mu_i-\mu_{i+1})\varpi_i$.
Then,
\begin{enumerate}[(i)]
\item\label{itemNicoA} $g$ is a morphism of $S_{n+1}$-modules.
\item $g(\alpha'_i)=\alpha_i$ for all $i\in \{1, \ldots , n\}$ and $g(\varpi'_i)=\varpi_i$ for $i\in \{0,1,\ldots, n\}$.
\item $g$ restricts to a bijection $\Lambda'\rightarrow \Lambda$ and to a bijection $\mathbb{Z}\Phi' \rightarrow \mathbb{Z}\Phi$.
\item In the former bijection, dominant elements in $\Lambda'$ correspond to elements in $\Lambda^+$.
\item\label{itemNicoE} $\mu\preceq\nu \iff g(\mu)\leq g(\nu),$  where $\leq$ here stands for the dominance order.  
\end{enumerate}
\end{lemma}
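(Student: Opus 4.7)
The plan is to address the five parts in an order that makes successive parts essentially automatic once the main computations are in place.

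First I would verify part (2) by direct substitution into the definition of $g$. For $\alpha'_i$, only the differences $\mu_{i-1}-\mu_i$, $\mu_i-\mu_{i+1}$ and $\mu_{i+1}-\mu_{i+2}$ are nonzero, producing $-\varpi_{i-1}+2\varpi_i-\varpi_{i+1}=\alpha_i$ (with the convention $\varpi_0=\varpi_{n+1}=0$). For $\varpi'_i$ with $i\in\{1,\dots,n\}$ the only nonzero difference is $\mu_i-\mu_{i+1}=1$, giving $\varpi_i$; and $g(\varpi'_0)=g(\mathbf{0})=0=\varpi_0$.

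Next, for part (1) I would use that $g$ is $\mathbb{Z}$-linear and that both $S_{n+1}$-actions are linear, so it suffices to check $g(s_i\cdot\mu)=s_{\alpha_i}\cdot g(\mu)$ on the standard basis of $\mathbb{Z}^{n+1}$, or more efficiently on the generating set $\{\varpi'_j\}$. The check breaks into the cases $i\neq j$, where $s_i$ fixes $\varpi'_j$ and $s_{\alpha_i}$ fixes $\varpi_j=g(\varpi'_j)$ (since $\langle\varpi_j,\alpha_i\rangle=0$), and the case $i=j$, where a one-line computation yields $g(s_i\cdot\varpi'_i)=\varpi_{i-1}-\varpi_i+\varpi_{i+1}=s_{\alpha_i}(\varpi_i)$. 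Part (2) then shows $g(\mathbb{Z}\Phi')\supseteq\mathbb{Z}\Phi$. The reverse inclusion holds because $\mathbb{Z}\Phi'$ is generated by $\alpha'_1,\dots,\alpha'_n$.

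For part (3), the kernel of $g$ is plainly $\mathbb{Z}(1,1,\dots,1)$, so $g$ factors through a linear isomorphism $\mathbb{Z}^{n+1}/\mathbb{Z}(1,\dots,1)\xrightarrow{\sim}\Lambda$ (surjectivity is immediate from part (2): every $\varpi_i$ is hit). Since translation by $(1,\dots,1)$ changes the coordinate sum by $n+1$, each coset in $\mathbb{Z}^{n+1}/\mathbb{Z}(1,\dots,1)$ has a unique representative with coordinate sum in $\{0,1,\dots,n\}$; that is precisely $\Lambda'$. Restricting to sum $0$ gives the bijection $\mathbb{Z}\Phi'\xrightarrow{\sim}\mathbb{Z}\Phi$. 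Part (4) follows at once: $\mu$ is dominant $\iff$ $\mu_i-\mu_{i+1}\geq 0$ for all $i$ $\iff$ $g(\mu)\in\Lambda^+$.

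The crux of the lemma is part (5). Set $d=\nu-\mu$ and $D_m=\sum_{i=1}^m d_i$. A short computation shows
\begin{equation*}
g(\nu)-g(\mu)=\sum_{m=1}^{n}(d_m-d_{m+1})\varpi_m=\sum_{m=1}^{n}(2D_m-D_{m-1}-D_{m+1})\varpi_m=\sum_{m=1}^{n}D_m\alpha_m,
\end{equation*}
using $\alpha_m=-\varpi_{m-1}+2\varpi_m-\varpi_{m+1}$ and the boundary values $D_0=0$ and (assuming $\sum d_i=0$) $D_{n+1}=0$. By definition, $\mu\preceq\nu$ means $\sum \mu_i=\sum\nu_i$ together with $D_m\geq 0$ for all $m$, which by the displayed equality is exactly the statement that $g(\nu)-g(\mu)\in\mathbb{Z}_{\geq 0}\Delta$, i.e.\ $g(\mu)\leq g(\nu)$ in the dominance order. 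There is no real obstacle in any step; the main care is in bookkeeping the boundary conventions $\varpi_0=\varpi_{n+1}=0$ and $D_0=D_{n+1}=0$ throughout.
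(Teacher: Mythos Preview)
The paper does not prove this lemma, stating only that it ``is straightforward and is therefore omitted.'' Your argument is correct and fills the gap cleanly; the key step in part~(5), rewriting $g(\nu)-g(\mu)$ in the simple-root basis via the partial sums $D_m$, is exactly the right computation.

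One small point is worth tightening. Your identity $g(\nu)-g(\mu)=\sum_{m=1}^n D_m\alpha_m$ is derived under the hypothesis $D_{n+1}=\sum_i(\nu_i-\mu_i)=0$. The forward implication is fine, since $\mu\preceq\nu$ includes this hypothesis. For the converse you must first \emph{deduce} $\sum_i\mu_i=\sum_i\nu_i$ from $g(\mu)\leq g(\nu)$ before invoking the identity, and as literally stated (for arbitrary $\mu,\nu\in\mathbb{Z}^{n+1}$) the equivalence is false: take $\mu=(1,\ldots,1)$, $\nu=\mathbf{0}$, where $g(\mu)=g(\nu)=0$ but $\mu\not\preceq\nu$. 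The intended domain is $\Lambda'$ (consistent with parts~(3)--(4)), and there the missing step is easy: $g(\mu)\leq g(\nu)$ forces $g(\nu-\mu)\in\mathbb{Z}\Phi$, so by part~(3) we have $(n+1)\mid\sum_i(\nu_i-\mu_i)$; since both coordinate sums lie in $\{0,\ldots,n\}$ they must coincide. With this one sentence added, your proof is complete.
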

In particular, we note that $\mathbf{0}=\varpi_0=g(\varpi_0')=g(\varpi_{n+1}')$.

For $\lambda\in\Lambda$, we write $\lambda_\mathrm{dom}$ for the unique element in $\Lambda^+$ in the $W_\mathrm{f}$-orbit of $\lambda$.
We notice that $\lambda_{\text{dom}}$ is the unique maximal element in the $W_\mathrm{f}$-orbit of $\lambda$.

\begin{cor}\label{gdom}
For $\mu\in \mathbb{Z}^{n+1}$ there is an equality $g(\mu_\mathrm{dom})=g(\mu)_\mathrm{dom}$.
\end{cor}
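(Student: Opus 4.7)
The plan is to use the previously established properties of $g$ listed in \Cref{dictionary}, namely $S_{n+1}$-equivariance and the correspondence of dominant elements. The whole argument amounts to invoking uniqueness of the dominant element in a Weyl group orbit.

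First I would write $\mu_\mathrm{dom}=w\cdot\mu$ for some $w\in S_{n+1}$, which exists by definition of $\mu_\mathrm{dom}$. Applying \Cref{dictionary}\ref{itemNicoA} (the $S_{n+1}$-equivariance of $g$), we get
\begin{equation*}
g(\mu_\mathrm{dom})=g(w\cdot\mu)=h(w)\cdot g(\mu),
\end{equation*}
so $g(\mu_\mathrm{dom})$ lies in the $W_\mathrm{f}$-orbit of $g(\mu)$.

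Next I would check that $g(\mu_\mathrm{dom})\in\Lambda^+$. Writing $\mu_\mathrm{dom}=(\mu_1,\ldots,\mu_{n+1})$ with $\mu_1\geq\cdots\geq\mu_{n+1}$, the definition of $g$ yields $g(\mu_\mathrm{dom})=\sum_{i=1}^{n}(\mu_i-\mu_{i+1})\varpi_i$, whose coordinates in the fundamental weight basis are all non-negative, so indeed $g(\mu_\mathrm{dom})\in\Lambda^+$. Alternatively one could appeal directly to \Cref{dictionary}(4) after observing that $\mu_\mathrm{dom}$ differs from an element of $\Lambda'$ by an $S_{n+1}$-invariant constant vector, which $g$ annihilates.

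Finally, since the $W_\mathrm{f}$-orbit of $g(\mu)$ contains a unique element in $\Lambda^+$, namely $g(\mu)_\mathrm{dom}$, we conclude that $g(\mu_\mathrm{dom})=g(\mu)_\mathrm{dom}$. There is no real obstacle here: the corollary is essentially a bookkeeping consequence of the dictionary established in \Cref{dictionary}, and the only mildly subtle point is making sure one is comparing orbits and dominant chambers on the correct sides of the bijection $g$.
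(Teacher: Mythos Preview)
Your proof is correct and follows essentially the same approach as the paper's: write $\mu_\mathrm{dom}=w\cdot\mu$, use $S_{n+1}$-equivariance to place $g(\mu_\mathrm{dom})$ in the $W_\mathrm{f}$-orbit of $g(\mu)$, and then invoke uniqueness of the dominant representative. The only minor difference is that you verify $g(\mu_\mathrm{dom})\in\Lambda^+$ directly from the formula for $g$, whereas the paper instead uses the order-preserving property (item~\ref{itemNicoE} of \Cref{dictionary}) to show $h(w)\cdot g(\mu)$ is maximal in the orbit; both routes yield the conclusion immediately.
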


\begin{proof}
Let $\mu_\mathrm{dom}=w\cdot\mu$, for some $w\in S_{n+1}$.
Then for every $u\in S_{n+1}$ we have $u\cdot\mu\preceq w\cdot\mu$.
It follows from  items \ref{itemNicoA} and \ref{itemNicoE} in   \Cref{dictionary} that $u\cdot g(\mu)\leq w\cdot g(\mu)$.
Since $h$ is a bijection we conclude that $w\cdot g(\mu)$ is maximal in the $W_\mathrm{f}$-orbit of $g(\mu)$.
It follows that $g(\mu)_\mathrm{dom} = w\cdot g(\mu) = g(w\cdot \mu) = g(\mu_\mathrm{dom})$. 
\end{proof}

The left action of $S_{n+1}$ on $\mathbb{Z}^{n+1}$ restricts to a left action of $S_{n+1}$ on $ \mathbb{Z}\Phi'$. Any left action induces a corresponding right action by applying the inverse, allowing us to define the left semidirect product $S_{n+1} \ltimes \mathbb{Z}\Phi',$ with a right action of $S_{n+1}$. The elements of this semidirect product are denoted  $y = w \varepsilon^{\mu},$ where $w\in S_{n+1}$ and $\mu\in \mathbb{Z}\Phi'.$  For $x=w \varepsilon^\mu$ and $y=u \varepsilon^\lambda$, the product in the semidirect product is given by
$$xy=wu\varepsilon^{u^{-1}(\mu)+\lambda}.$$ 

\subsubsection{Four isomorphic affine Weyl groups}\label{sec:isomorphisms}
We will use the notation $(\mathbb{Z}\Phi \rtimes W_\mathrm{f} )^\bullet$ for the affine Weyl group, following the conventions in \cite{Bour46},  where the identity alcove is chosen to be $\mathcal{A}_{w_0}$ in our conventions. As before, the left semidirect product $(W_\mathrm{f}\ltimes \mathbb{Z}\Phi)^\bullet$ is defined using the right action induced by the inverse of the standard left action. 

Recall the map $h:S_{n+1}\to W_\mathrm{f}$  from last section. Using \Cref{dictionary}, we derive the following isomorphism of groups.
\begin{align*}
g_1\colon S_{n+1} \ltimes \mathbb{Z}\Phi' &\xrightarrow{\sim} (W_\mathrm{f}\ltimes \mathbb{Z}\Phi)^\bullet \\
w \varepsilon^{\mu} &\mapsto (h(w),g(\mu)).\end{align*}
The following map is the standard group isomorphism between left and right semidirect products.
\begin{align*}
g_2\colon (W_\mathrm{f}\ltimes \mathbb{Z}\Phi)^\bullet &\xrightarrow{\sim}(\mathbb{Z}\Phi \rtimes W_\mathrm{f} )^\bullet  \\
(w,\eta) &\mapsto (w(\eta),w).\end{align*}
We now need to relate this group and $\widetilde{S}_{n+1}(GR)$, which is defined by the generators 
$\{s_0,s_1,\cdots, s_{n}\}$ and the relations determined by the following Coxeter graph:
\begin{center}
\begin{dynkinDiagram}[labels={s_0,s_1,s_2,s_{n-1},s_{n}},edge length=1.5cm,root radius=0.09cm]A[1]{}
\end{dynkinDiagram}    
\end{center}

We define $\tilde{w}=s_{\tilde{\alpha}}\in W_\mathrm{f}$, where $\tilde{\alpha}$ is the highest root.
In terms of generators, we have $$\tilde{w}=s_{\alpha_1}s_{\alpha_2}\cdots s_{\alpha_{n-1}}s_{\alpha_{n}}s_{\alpha_{n-1}}\cdots s_{\alpha_2}s_{\alpha_1},$$
so that $h^{-1}(\tilde{w})\in S_{n+1}$ corresponds to the transposition exchanging $1$ and $n+1$.
The following isomorphism is classical.
\begin{align*}
g_3 \colon (\mathbb{Z}\Phi \rtimes W_\mathrm{f} )^\bullet &\xrightarrow{\sim}  \widetilde{S}_{n+1}(GR)  \\
(0,s_{\alpha_i}) &\mapsto s_i \  \ \mathrm{for\ all\ } 1\leq i\leq n,\\
(\tilde{\alpha},\tilde{w}) &\mapsto s_0.\end{align*}
Lastly we need to introduce the group $\perm$ (in \cite{BB} is called $\widetilde{S}_{n+1}$). It is the  subgroup of the permutation group of $\mathbb{Z}$, with elements being the permutations $\pi: \mathbb{Z}\to \mathbb{Z}$  satisfying: 
\begin{enumerate}[label=(P{{\arabic*}})]
\item $\pi(t+n+1)=\pi(t)+n+1$ for all $t\in \mathbb{Z}$, \label{p1}
\item $\displaystyle  \sum_{i=1}^{n+1} \pi(t)=  \sum_{t=1}^{n+1} t=\frac{(n+1)(n+2)}{2}. $\label{p2}
\end{enumerate}
Such a $\pi \in \perm$ is uniquely determined by its values on $\{1,2,\ldots, n+1\}$,  we write $\pi=[a_1, \ldots, a_{n+1}]$ (this is called the \emph{window} notation) to mean that $\pi(t)=a_t$ for $1\leq t\leq n+1$. The following isomorphism can be found in \cite[Proposition 8.3.3]{BB}.
\begin{align*}
g_4 \colon \widetilde{S}_{n+1}(GR) &\xlongrightarrow{\sim} \perm  \\
s_i &\mapsto [1,2,\ldots, i-1,i+1,i+2,\ldots, n+1]\\
s_0 &\mapsto [0,2,3,\ldots, n,n+2].\end{align*}

\subsubsection{The map \texorpdfstring{$\pi$}{pi}}
To each element $y = w \varepsilon^\mu \in S_{n+1} \ltimes \mathbb{Z}\Phi'$, we associate a  map $\pi_y : \mathbb{Z} \to \mathbb{Z}$ as follows. Since each integer can be uniquely written in the form $q(n+1) + r$ with $q \in \mathbb{Z}$ and $r \in \{1, \ldots, n+1\}$, it suffices to set
\[
\pi_y(q(n+1) + r) = (q + \mu_r)(n+1) + w(r).
\]
Let us explain why the assignment $y\mapsto \pi_y$ defines a map $\pi:S_{n+1} \ltimes \mathbb{Z}\Phi'\to \perm$.
The bijectivity of $\pi_y$  and  \Cref{p1} follow directly from the definitions.
To verify \Cref{p2}, one must use the fact that, by definition, for $\mu\in \mathbb{Z}\Phi'$ the equation $\displaystyle \sum_{t=1}^{n+1}\mu_t=0$ holds.
\begin{lemma}\label{lem:pi}
The following diagram commutes.
\[\begin{tikzcd}
{S_{n+1} \ltimes \mathbb{Z}\Phi'} & {(W_\mathrm{f}\ltimes \mathbb{Z}\Phi)^\bullet} & {(\mathbb{Z}\Phi \rtimes W_\mathrm{f} )^\bullet} & {\widetilde{S}_{n+1}(GR)} & \perm
\arrow["{g_1}", from=1-1, to=1-2]
\arrow["\pi"', bend right=20, from=1-1, to=1-5]
\arrow["{g_2}", from=1-2, to=1-3]
\arrow["{g_3}", from=1-3, to=1-4]
\arrow["{g_4}", from=1-4, to=1-5]
\end{tikzcd}\]
\end{lemma}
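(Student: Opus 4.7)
The plan is to prove the lemma by verifying the diagram on a generating set, after first establishing that $\pi$ is a group homomorphism. Since $g_1, g_2, g_3, g_4$ are already known to be isomorphisms, their composition $G := g_4 \circ g_3 \circ g_2 \circ g_1$ is automatically a homomorphism; so once I know $\pi$ is a homomorphism, equality $\pi = G$ can be checked on any generating set of $S_{n+1} \ltimes \mathbb{Z}\Phi'$.

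First I would verify that $\pi$ is a group homomorphism by a direct computation. Given $y_1 = w_1 \varepsilon^{\mu_1}$ and $y_2 = w_2 \varepsilon^{\mu_2}$, the product in the semidirect product reads $y_1 y_2 = w_1 w_2 \, \varepsilon^{w_2^{-1}(\mu_1) + \mu_2}$. For $t = q(n+1)+r$ with $r \in \{1,\dots,n+1\}$, applying $\pi_{y_2}$ then $\pi_{y_1}$ and using the identity $(w_2^{-1}(\mu_1))_r = (\mu_1)_{w_2(r)}$ (coming from the permutation action on $\mathbb{Z}^{n+1}$) shows that both $\pi_{y_1 y_2}(t)$ and $\pi_{y_1}(\pi_{y_2}(t))$ equal $(q + (\mu_1)_{w_2(r)} + (\mu_2)_r)(n+1) + w_1 w_2(r)$.

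Next, since the Coxeter generators $\{s_0,s_1,\ldots,s_n\}$ generate $\widetilde{S}_{n+1}(GR)$, their preimages under $g_3 \circ g_2 \circ g_1$ generate $S_{n+1} \ltimes \mathbb{Z}\Phi'$. Tracing the definitions: for $1 \leq i \leq n$, the preimage of $s_i$ is $s_i \varepsilon^{0}$; and the preimage of $s_0$ is $t_{(1,n+1)} \varepsilon^{\nu_0}$, where $t_{(1,n+1)} = h^{-1}(\tilde{w}) \in S_{n+1}$ is the transposition swapping $1$ and $n+1$, and $\nu_0 = (-1,0,\dots,0,1)$. Here $\nu_0$ is obtained from $g_2^{-1}(\tilde{\alpha}, \tilde{w}) = (\tilde{w}, -\tilde{\alpha})$ together with $g^{-1}(\tilde{\alpha}) = (1,0,\dots,0,-1)$ (using that $\tilde{w}$ is the involution $s_{\tilde{\alpha}}$, so $\tilde{w}^{-1}(\tilde{\alpha}) = -\tilde{\alpha}$).

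Finally I would check equality $\pi = G$ on these generators by evaluating the window notation. For $s_i \varepsilon^{0}$, the defining formula gives $\pi_{s_i \varepsilon^{0}}(r) = s_i(r)$ for $r \in \{1,\dots,n+1\}$, matching $g_4(s_i)$. For $t_{(1,n+1)} \varepsilon^{\nu_0}$, evaluating at $r=1,\ldots,n+1$ yields $\pi(1) = -(n+1) + (n+1) = 0$, $\pi(n+1) = (n+1) + 1 = n+2$, and $\pi(r) = r$ for $2 \leq r \leq n$, which is exactly the window $[0,2,3,\ldots,n,n+2] = g_4(s_0)$. There is no real obstacle; the entire proof is a mechanical check, and the only subtlety is bookkeeping the isomorphism $g_2$ correctly when computing the preimage of $s_0$, since conflating the left and right versions of the semidirect product would give a wrong sign in $\nu_0$.
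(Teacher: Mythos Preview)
Your proposal is correct and follows essentially the same approach as the paper: first verify that $\pi$ is a homomorphism by the direct computation using $(w_2^{-1}(\mu_1))_r=(\mu_1)_{w_2(r)}$, then check agreement with $g_4\circ g_3\circ g_2\circ g_1$ on the preimages of the Coxeter generators $s_0,s_1,\dots,s_n$. Your identification of the preimage of $s_0$ as $t_{(1,n+1)}\varepsilon^{(-1,0,\dots,0,1)}$ and the resulting window $[0,2,3,\dots,n,n+2]$ match the paper's computation exactly.
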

\begin{proof}

Let us first prove that $\pi$ is a group morphism. 
Indeed, if $x=w\varepsilon^\mu$ and $y=u\varepsilon^\lambda,$
\begin{equation}
\begin{array}{rl}
\pi_x\circ \pi_y(q(n+1)+r)    & =  \displaystyle  \pi_x((q+\lambda_r)(n+1)+u(r)) \\
&  \\
&  =  \displaystyle
(q+\lambda_r+\mu_{u(r)})(n+1)+wu(r).
\end{array}
\end{equation}
On the other hand, $\pi_{xy}(q(n+1)+r)=(q+(u^{-1}(\mu)+\lambda)_r)(n+1)+wu(r)$, and these numbers are equal as by definition $\mu_{u(r)}=(u^{-1}(\mu))_r.$

This means that to prove the equality, one just needs to check it on the generators $ g_1^{-1} \circ g_2^{-1}\circ g_3^{-1}(s_i)$ with $i\in \{0,1,\ldots, n\}$.
For $i\in \{1,\ldots, n\}$, these elements are $s_i\varepsilon^0$, and $\pi_{s_i\varepsilon^0}(q(n+1)+r)=q(n+1)+s_i(r)$, which is clearly equal to $[1,2,\ldots, i-1,i+1,i+2,\ldots, n+1]$ in the window notation.

The element $ g_1^{-1} \circ g_2^{-1}\circ g_3^{-1}(s_0)$ equals $y=h^{-1}(\tilde{w})\varepsilon^{\tilde{\mu}}$ with $\tilde{\mu}=(-1,0,0,\ldots, 0,1)$.
To verify this, one needs to carefully follow the isomorphisms and use the fact that $\tilde{w}(\tilde{\alpha})=-\tilde{\alpha}=-\varpi_1-\varpi_n$ in $\bbZ\Phi$.
Recall that $h^{-1}(\tilde{w})$ is the transposition $(1 \ \ n+1)\in S_{n+1}$.
It follows that
\begin{equation}
\pi_y(0\cdot (n +1)+ i) = \tilde{\mu}_i(n+1) +(1 \ \ n+1)(i) =
\begin{cases}
0, &\mbox{if } i=1\\
i, &\mbox{if } 2\leq i\leq n\\
n+2, &\mbox{if } i=n+1\\
\end{cases}
\end{equation}
Therefore $\pi_y=[0,2,3,\ldots, n,n+2]$ and the proof is complete.
\end{proof}

\subsubsection{The criterion}
The goal of this section is to prove the criterion in \Cref{criteriofinal}.
The Bruhat order is well defined in any Coxeter system, in particular in $\widetilde{S}_{n+1}(GR)$.
We transfer this structure to the other three  groups in Section \Cref{sec:isomorphisms} via the isomorphisms $g_i$.
In particular we obtain a Bruhat order in $S_{n+1} \ltimes \mathbb{Z}\Phi'$.

\begin{theorem}\label{criterio}
Let $y_1 = w_1 \varepsilon^{\mu_1} $, $y_2 = w_2 \varepsilon^{\mu_2} \in S_{n+1} \ltimes \mathbb{Z}\Phi'$. Then the following are equivalent.
\begin{enumerate}[label=(\alph*)]
\item We have $y_1 \leq y_2$ in the Bruhat order.
\item For all $i, j \in \{1, \ldots, n+1\}$, we have
\begin{equation}\label{eq: ineq criterio}
(\mu_1 + \varpi_i' - w_1^{-1} \varpi_{j-1}')_\mathrm{dom}\preceq (\mu_2 + \varpi_i' - w_2^{-1} \varpi_{j-1}')_\mathrm{dom}.    
\end{equation}
\end{enumerate}
\end{theorem}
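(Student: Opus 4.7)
The plan is to transport the statement through the group isomorphism $\pi\colon S_{n+1}\ltimes \mathbb{Z}\Phi' \xrightarrow{\sim} \perm$ of \Cref{lem:pi} and then invoke the classical rank-function (``tableau'') criterion for Bruhat order on the affine symmetric group (see e.g.\ \cite[Ch.~8]{BB}): for $u,v\in\perm$, one has $u\leq v$ if and only if the rank statistics
\[
u[i,j]\ :=\ |\{a\in\mathbb{Z}:\,a\leq j,\ u(a)\geq i\}|
\]
(finite since $u(a)\to\pm\infty$ as $a\to\pm\infty$) satisfy $u[i,j]\leq v[i,j]$ for all $(i,j)$ in an appropriate index set. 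Using the double periodicity $u[i+n+1,j+n+1]=u[i,j]$ one reduces the check to a finite range, and I expect the explicit form below to imply that $(i,j)\in\{1,\dots,n+1\}^2$ suffices.

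The heart of the argument is a direct combinatorial evaluation of $\pi_y[i,j]$ in terms of the window data $(w,\mu)$. Writing each $a\in\mathbb{Z}$ uniquely as $q(n+1)+r$ with $r\in\{1,\dots,n+1\}$ and using $\pi_y(a)=(q+\mu_r)(n+1)+w(r)$, the inner count in each residue class reduces to a bounded integer interval in $q$; assembling the contributions yields
\[
\pi_y[i,j]\ =\ \sum_{r=1}^{n+1}\max\bigl(0,\,(\mu+\varpi_j'-w^{-1}\varpi_{i-1}')_r\bigr),
\]
that is, the sum of the positive parts of the integer vector $\nu(y,i,j):=\mu+\varpi_j'-w^{-1}\varpi_{i-1}'$. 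Here $\varpi_j'$ encodes the position cut $a\leq j$ restricted to the fundamental window, while $w^{-1}\varpi_{i-1}'$ encodes, via the bijection $r\mapsto w(r)$, the value cut $\pi_y(a)\geq i$. Because $\mu\in\mathbb{Z}\Phi'$ has zero coordinate sum, the total $\sum_r \nu(y,i,j)_r$ equals $j-(i-1)$ independently of $y$, so the equality-of-totals clause in the definition of $\preceq$ is automatic.

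The sum of positive parts of a vector equals the largest partial sum of its dominant (decreasing) rearrangement, so $\pi_y[i,j]$ is one specific partial sum of $(\nu(y,i,j))_{\mathrm{dom}}$. As $(i,j)$ varies over $\{1,\dots,n+1\}^2$, incrementing $i$ (resp.\ decrementing $j$) subtracts $w^{-1}e_i$ (resp.\ $e_j$) from $\nu$; tracking the effect of such single-coordinate decrements on the dominant rearrangement shows that the \emph{family} of maximum-partial-sum inequalities ``$\pi_{y_1}[i,j]\leq\pi_{y_2}[i,j]$ for all $(i,j)\in\{1,\dots,n+1\}^2$'' is equivalent to the full majorization ``$(\nu(y_1,i,j))_{\mathrm{dom}}\preceq(\nu(y_2,i,j))_{\mathrm{dom}}$ for each such $(i,j)$'', which is condition (b). The main technical obstacle is precisely this equivalence between a single-summary inequality (the tableau count) and a full majorization inequality, indexed over the same finite box. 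An efficient way to handle it is to first settle the two boundary cases --- $\mu_1=\mu_2=0$, where the statement reduces to the classical tableau criterion for $S_{n+1}$ applied to $w_1,w_2$, and $w_1=w_2=\mathrm{id}$, where everything reduces to an elementary statement about dominance of translations --- and then assemble the general case via the semidirect-product identity $w\varepsilon^\mu=\varepsilon^{w(\mu)}w$ together with the $S_{n+1}$-equivariance of the dominant rearrangement.
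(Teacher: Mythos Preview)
Your starting point is exactly the paper's: transport through $\pi$ and invoke the rank--function criterion from \cite[Theorem 8.3.7]{BB}, then evaluate the rank count by breaking $a\in\mathbb{Z}$ into residue classes. Your formula $\pi_y[i,j]=\sum_r\max\bigl(0,(\mu+\varpi'_j-w^{-1}\varpi'_{i-1})_r\bigr)$ for $(i,j)\in\{1,\dots,n+1\}^2$ is correct and coincides with the paper's computation in the special case $Q=0$.

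The gap is in the sentence ``I expect\ldots\ that $(i,j)\in\{1,\dots,n+1\}^2$ suffices''. It does not. The diagonal periodicity $u[i+n+1,j+n+1]=u[i,j]$ lets you fix one index in a window, but the other still runs over all of $\mathbb{Z}$. Writing $i=q(n+1)+r$, $i'=q'(n+1)+r'$, the paper shows that the rank count depends on $(r,r')$ \emph{and} on the integer $Q:=q-q'$, namely
\[
y[i,i']=\sum_{m=1}^{n+1}\max\bigl(0,\,Q+(\mu+\varpi'_r-w^{-1}\varpi'_{r'-1})_m\bigr)=:|\nu^{(r,r')}|_Q.
\]
Thus the tableau criterion becomes: for every pair $(r,r')$ \emph{and every} $Q\in\mathbb{Z}$, $|\nu_1^{(r,r')}|_Q\le |\nu_2^{(r,r')}|_Q$. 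The parameter $Q$ is exactly what sweeps through \emph{all} partial sums of $(\nu^{(r,r')})_{\mathrm{dom}}$; the paper records this as a separate lemma (two dominant vectors of equal total satisfy $\mu\preceq\lambda$ iff $|\mu|_Q\le|\lambda|_Q$ for all $Q$), proved via conjugate partitions. That is the bridge from the rank inequalities to the majorization in (b).

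Your proposed substitute---varying $(i,j)$ inside the finite box and ``tracking single-coordinate decrements''---does not recover this. Decrementing $j$ (or incrementing $i$) subtracts a standard basis vector from $\nu$ \emph{for both $y_1$ and $y_2$ at once}; you are then comparing the maximal partial sum of a \emph{new} pair of vectors, not a different partial sum of the original pair. The final suggestion, to reduce to the boundary cases $\mu_k=0$ and $w_k=\mathrm{id}$ and ``assemble'' via $w\varepsilon^\mu=\varepsilon^{w(\mu)}w$, also fails: Bruhat order does not factor along this semidirect-product decomposition, and neither does the majorization condition. The clean fix is simply not to restrict $Q$: keep it free, and invoke the $|\cdot|_Q$ characterisation of majorization.
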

Note that the coordinate-sum of either side of \eqref{eq: ineq criterio} equals $i+j-1$, so that the left condition in \eqref{eq: Zn order} is always satisfied.

\begin{proof}
For integers $i, i' \in \mathbb{Z}$ and $y = w \varepsilon^{\mu} \in \widetilde{S}_{n+1}$, we define
\[
y[i, i'] := \#\{a \leq i \mid \pi_y(a) \geq i'\}.
\]

By \Cref{lem:pi},   \cite[Theorem 8.3.7]{BB} translates to the following result.

\begin{prop}\label{prop:Brenti} Let $y_1 = w_1 \varepsilon^ {\mu_1}$, $y_2 = w_2 \varepsilon^{\mu_2} \in S_{n+1} \ltimes \mathbb{Z}\Phi'$. Then the following are equivalent.
\begin{enumerate}[label=(\alph*)]
\item\label{item:Brenti1} $y_1 \leq y_2$ in the Bruhat order.
\item\label{item:Brenti2} For all $i, i' \in \mathbb{Z}$, we have
\[
y_1[i, i'] \leq y_2[i, i'].
\]
\end{enumerate}
\end{prop}
For $y=w \varepsilon^ {\mu}\in \widetilde{S}_{n+1},$ let us evaluate $y[i, i']$ for $i = (n+1)q + r$ and $i' = (n+1)q' + r'$, where $r,r' \in \{1,2,...,n+1\}$. We have the sequence of equalities
\begin{align*}
y[i, i'] &= \#\{a \leq i \mid \pi_y(a) \geq i'\} \\
&= \sum_{m=1}^{n+1} \#\{a \leq i \mid \pi_y(a) \geq i' \text{ and } a \equiv m \pmod{n+1}\} \\
&= \sum_{m=1}^{n+1} \#\{b \in \mathbb{Z} \mid b(n+1) + m \leq i \text{ and } \pi_y(b(n+1) + m) \geq i'\} \\
&= \sum_{m=1}^{n+1} \#\{b \in \mathbb{Z} \mid b(n+1) + m \leq q(n+1) + r \text{ and } (b + \mu_m)(n+1) + w(m) \geq q'(n+1) + r'\}.
\end{align*}

\begin{lemma}\label{lem:easy}
Suppose that $m,r\in \{1,2, \ldots ,n+1\}$. 
The following conditions are equivalent:
\begin{enumerate}
\item\label{item:easy1} $b(n+1)+m \leq q(n+1) +r$
\item\label{item:easy2} $b+(\varpi'_m)_{r+1} \leq q $
\item\label{item:easy3} $b<q+(\varpi'_r)_m$
\end{enumerate}
\end{lemma}
\begin{proof}
The equality $(\varpi'_m)_{r+1}=1-(\varpi'_r)_m$ establishes the equivalence \ref{item:easy2} $\iff$ \ref{item:easy3}.
The equivalence \ref{item:easy1} $\iff$ \ref{item:easy3} is obtained by analyzing the cases $m\leq r$ and $m>r$ separately.
\end{proof}

We can now evaluate $y[i,i']$ in the abovementioned elements. 
\begin{equation}
\begin{array}{rl}
y[i,i']     & =  \displaystyle  \sum_{m=1}^{n+1}  \# \{  b\in \mathbb{Z} \mid b(n+1)+m\leq q(n+1)+r \mbox{ and } q'(n+1)+r'\leq (b+\mu_m)(n+1)+w(m)   \}    \\
&  \\
&  \displaystyle =  \sum_{m=1}^{n+1}  \# \{  b\in \mathbb{Z} \mid b<q+(\varpi'_r)_{m} \mbox{ and }  q'+ ( \varpi'_{r'})_{w(m)+1}\leq b+\mu_m     \}   \\
& \\
&  =  \displaystyle   \sum_{m=1}^{n+1}  \# \{  b\in \mathbb{Z} \mid b<q+(\varpi'_r)_{m} \mbox{ and }  q'+ (w^{-1}\cdot \varpi'_{r'-1})_m  \leq b+\mu_m     \}    \\
& \\
&   =   \displaystyle  \sum_{m=1}^{n+1}  \max ( 0, q+(\varpi'_r)_{m} +\mu_m -  q'-  (w^{-1}\cdot \varpi'_{r'-1} )_m ) . 
\end{array}
\end{equation}
We use the implications \ref{item:easy1} $\implies$ \ref{item:easy2} and \ref{item:easy1} $\implies$ \ref{item:easy3} of \Cref{lem:easy} for the second equality.
The third equality follows from the equation $(\varpi'_{r'})_{w(m)+1}=(\varpi'_{r'-1})_{w(m)}$ and the last equation is obtained applying the formula $\# \{a\leq b<c\}=\max (0,c-a)$ in this context.

We define $Q:=q-q'$ and $y^{(r,r')} := \mu +\varpi'_r - w^{-1}\cdot \varpi'_{r'-1}$.
With this notation we have
\begin{equation}
y[i,i']   = \displaystyle  \sum_{m=1}^{n+1} \max ( 0, Q + (y^{(r,r')})_m )
\end{equation}

So condition \ref{item:Brenti2} in \Cref{prop:Brenti} is equivalent to
\begin{equation}\label{condb1}
\forall\,  r, r' \in \{1, \ldots, n+1\},  \ \forall \ Q \in \mathbb{Z} : \sum_{m=1}^{n+1} \max(0, Q + (y^{(r, r')}_1)_m) \leq \sum_{m=1}^{n+1} \max(0, Q + (y^{(r, r')}_2)_m).
\end{equation}

For $\nu\in \mathbb{Z}^{n+1}$ and  $Q\in \mathbb{Z}$ let us use the notation $$\vert\nu\vert_Q:= \sum_{m=1}^{n+1} \max(0, Q+\nu_m).$$
In this language, condition \eqref{condb1} is equivalent to
\begin{equation}\label{condb2}
\forall\,  r, r' \in \{1, \ldots, n+1\}, \ \forall \ Q  \in \mathbb{Z} : \vert y^{(r, r')}_1 \vert_Q \leq \vert y^{(r, r')}_2 \vert_Q.
\end{equation}

\begin{lemma}\label{luc}
Let $\mu, \lambda \in \mathbb{Z}^{n+1}$ be dominant elements such that  $\mu_1 + \cdots + \mu_{n+1} = \lambda_1 + \cdots + \lambda_{n+1}$.
Then, we have
\begin{equation}\label{equiv} 
\mu \preceq \lambda
\Longleftrightarrow \forall Q \in \mathbb{Z} :\vert\mu\vert_Q \leq \vert\lambda\vert_Q.
\end{equation}
\end{lemma}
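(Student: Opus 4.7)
The approach will be to view $|\nu|_Q$ as a piecewise linear convex function of $Q$, and to read off the partial sums $S_k(\nu) := \nu_1 + \cdots + \nu_k$ from its values at well chosen integer points. The main tool I will need is the identity
\begin{equation*}
|\nu|_Q = \max_{0 \leq j \leq n+1}\bigl(jQ + S_j(\nu)\bigr),
\end{equation*}
valid for any dominant $\nu \in \mathbb{Z}^{n+1}$ and any $Q \in \mathbb{R}$. This holds because $\max(0,x) \geq x$ applied termwise gives $|\nu|_Q \geq jQ + S_j(\nu)$ for every $j$, and conversely, since $(Q + \nu_m)_m$ is decreasing when $\nu$ is dominant, the sum $|\nu|_Q$ equals $kQ + S_k(\nu)$ for the largest index $k$ with $Q + \nu_k \geq 0$.

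The direction $(\Rightarrow)$ will then be immediate: if $\mu \preceq \lambda$, then $S_j(\mu) \leq S_j(\lambda)$ for all $j$, and comparing the maxima termwise gives $|\mu|_Q \leq |\lambda|_Q$ for every $Q$, a fortiori for $Q \in \mathbb{Z}$.

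For the converse, the plan is to test the hypothesis at the specific integer points $Q_k := -\lambda_k$ for $k = 1, \ldots, n$ (the case $k = n+1$ is forced by the assumption $\sum_i \mu_i = \sum_i \lambda_i$). Dominance of $\lambda$ yields
\begin{equation*}
|\lambda|_{Q_k} = \sum_{m=1}^{k}(\lambda_m - \lambda_k) = S_k(\lambda) - k\lambda_k,
\end{equation*}
while the elementary bound $\max(0,x) \geq x$, summed only over the first $k$ terms, gives $|\mu|_{Q_k} \geq k Q_k + S_k(\mu) = -k\lambda_k + S_k(\mu)$ without requiring $\mu$ to be dominant. The hypothesis $|\mu|_{Q_k} \leq |\lambda|_{Q_k}$ then collapses to $S_k(\mu) \leq S_k(\lambda)$, proving $\mu \preceq \lambda$.

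I do not anticipate any serious obstacle; the only subtlety worth flagging is that the test points $Q_k$ must lie in $\mathbb{Z}$ for the hypothesis to be applicable, which is automatic because $\lambda \in \mathbb{Z}^{n+1}$. Conceptually, the argument is just Legendre duality between the convex function $Q \mapsto |\nu|_Q$ and its concave dual $k \mapsto S_k(\nu)$, but for this lemma the explicit evaluation at $Q = -\lambda_k$ avoids any need to develop the general machinery.
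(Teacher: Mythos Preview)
Your proof is correct and takes a genuinely different route from the paper's.

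The paper first shifts $\mu,\lambda$ by a constant to reduce to the case of partitions, then invokes the Young-diagram identity
\[
\mu'_1+\cdots+\mu'_{-Q}+|\mu|_Q=|\mu|\qquad(Q<0)
\]
together with the classical equivalence $\mu\preceq\lambda\iff\mu'\succeq\lambda'$ between a dominance relation and the reverse relation on conjugate partitions; the case $Q\ge 0$ is handled separately by observing that $|\mu|_Q=|\lambda|_Q$ there.

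Your argument avoids conjugate partitions entirely. The identity $|\nu|_Q=\max_{0\le j\le n+1}\bigl(jQ+S_j(\nu)\bigr)$ makes the forward implication a one-line comparison of termwise maxima, and for the converse you simply evaluate the hypothesis at the integers $Q_k=-\lambda_k$, where dominance of $\lambda$ pins down $|\lambda|_{Q_k}$ exactly while the trivial bound $\max(0,x)\ge x$ on the first $k$ terms of $|\mu|_{Q_k}$ is already enough. This is more self-contained: no reduction step, no appeal to partition combinatorics, and both directions handled uniformly. The paper's approach, by contrast, ties the lemma to a familiar picture (columns of a Young diagram to the right of a vertical line), which some readers may find more illuminating even if it requires more machinery.
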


\begin{proof}
We first reduce the proof to the case where both $\mu$ and $\lambda$ are partitions. 
Let  $a=\min\{\mu_{n+1},\lambda_{n+1}\}$ and define $\tilde{\lambda}:=(\lambda_1-a,\ldots,\lambda_{n+1}-a)$, $\tilde{\mu}:=(\mu_1-a,\ldots,\mu_{n+1}-a) \in \mathbb{Z}^{n+1}$.
These two elements are partitions satisfying 
\begin{equation}\label{abc}
	\mu\preceq \lambda \iff \tilde{\mu}\preceq \tilde{\lambda}.
\end{equation} 
On the other hand, shifting the variable  $Q$  to $Q+a$  yields the equivalence
\begin{equation}\label{def}
\forall Q \in \mathbb{Z} : \vert\tilde{\mu}\vert_Q \leq \vert\tilde{\lambda}\vert_Q
\Longleftrightarrow 
\forall Q \in \mathbb{Z} : \vert{\mu}\vert_Q \leq \vert{\lambda}\vert_Q.    
\end{equation}
By combining \eqref{abc} and \eqref{def} we conclude that the original statement holds for the pair  $(\mu, \lambda)$ if and only if it holds for the pair  $(\tilde{\mu}, \tilde{\lambda})$.

By the previous paragraph we can assume that  $\mu$ and $\lambda$ are partitions.
For any partition $\nu=(\nu_1,\ldots,\nu_l)$ we use the convention that $\nu_i:=0$ if $i>l,$ so that $\nu_i$ is defined for $i\in \mathbb{Z}_{>0}.$
We denote by $\nu'$  the conjugate of $\nu$ and we put $\vert\nu\vert=\sum_{i=1}^l\nu_i$. For all $Q\in \mathbb{Z}_{<0}$ one has the equality (see \Cref{fig:Young})
\begin{equation}\label{eq:property mu}
 \mu'_1+\cdots +\mu'_{-Q}+\vert\mu\vert_Q=  \vert\mu\vert= \vert\lambda\vert= \lambda'_1+\cdots + \lambda'_{-Q}+\vert\lambda\vert_Q,
\end{equation}
where $\mu'=(\mu_1',\mu_2',\ldots)$ and $\lambda' =(\lambda_1',\lambda_2', \ldots)$.
\begin{figure}
\centering
\begin{tikzpicture}[scale=0.45]
\filldraw[lightgray]  (2,5)--++(1,0)--++(0,2)--++(1,0)--++(0,2)--++(2,0)--++(0,1)--++(-4,0) -- cycle;
\draw[black, ultra thick]  (-2,0)--++(1,0)--++(0,2)--++(2,0)--++(0,3)--++(2,0)--++(0,2)--++(1,0)--++(0,2)--++(2,0)--++(0,1)--++(-8,0)-- cycle;
\draw[blue,ultra thick] (2,-2)--++(0,13);
\node at (-3.5, 6)   { \large  $  \lambda =$};
\node at (3, 8)   { \large $  |\lambda|_Q $};
\node at (0, -1)   { \small $ - Q $};
\draw[black, thick,->] (0.8,-1)--(1.9,-1);
\draw[black, thick,->] (-0.8,-1)--(-2,-1);
\end{tikzpicture}
\caption{Using Young diagrams to illustrate \Cref{eq:property mu}.}
\label{fig:Young}
\end{figure}
So we conclude that for all $Q\in \mathbb{Z}_{<0},$
\begin{equation}\label{keyeq}
	\mu'_1+\cdots +\mu'_{-Q}\geq  \lambda'_1+\cdots +\lambda'_{-Q} \iff \vert\mu\vert_Q\leq \vert\lambda\vert_Q.
\end{equation}
Using the well-known equivalence $\mu \preceq \lambda \iff  \mu'\succeq \lambda',$ as well as \Cref{keyeq} we obtain that 
\begin{equation}\label{keyeq1}
\mu \preceq \lambda \iff \vert\mu\vert_Q\leq \vert\lambda\vert_Q.
\end{equation}
for all  $Q \in \mathbb{Z}_{<0}$.
On the other hand, for all  $Q\in \mathbb{Z}_{\geq 0}$ we have 
\begin{equation}\label{keyeq2}
| \lambda |_Q = Q(n+1)+   |\lambda|  = Q(n+1) + |\mu| = |\mu|_Q.
\end{equation}
By combining \eqref{keyeq1} and \eqref{keyeq2} we obtain \eqref{equiv}.
\end{proof}

It is clear that if $\nu\in \mathbb{Z}^{n+1}$ and  $Q\in \mathbb{Z}$ one has $\vert\nu\vert_Q=\vert\nu_{\mathrm{dom}}\vert_Q$. Thus, by applying \Cref{luc}, we conclude that condition \eqref{condb2} is equivalent to:

\[
\forall r, r' \in \{1, \ldots, n+1\}  :  (y^{(r, r')}_1)_{\mathrm{dom}}  \preceq (y^{(r, r')}_2)_{\mathrm{dom}},
\]
or in other words, 
\begin{equation}\label{quasifinal}
\forall r, r' \in \{1, \ldots, n+1\}  :  (\mu_1 +\varpi'_r - w_1^{-1}(\varpi'_{r'-1}))_{\mathrm{dom}}  \preceq (\mu_2 +\varpi'_r - w_2^{-1}(\varpi'_{r'-1}))_{\mathrm{dom}}.
\end{equation}
This ends the proof of \Cref{criterio}.
\end{proof}
In the next corollary, we will identify  $S_{n+1}$ and $W_\mathrm{f}$ through the map $h$. This simplification is intended to avoid further complicating the notation in the proof.

\begin{cor}\label{criteriofinal}
Let $x_1 = (w_1, \nu_1) $, $x_2 = (w_2,\nu_2) \in (W_\mathrm{f} \ltimes \mathbb{Z}\Phi)^\bullet$,
then the following are equivalent.
\begin{enumerate}[label=(\alph*)]
\item We have $x_1 \leq x_2$ in the Bruhat order.
\item For all $i, j \in \{0,1 \ldots, n\}$, we have
\[
(\nu_1 + \varpi_i - w_1^{-1} \varpi_j)_\mathrm{dom}\leq (\nu_2 + \varpi_i - w_2^{-1} \varpi_j)_\mathrm{dom}.
\]
\end{enumerate}
\end{cor}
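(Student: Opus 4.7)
The plan is to deduce this corollary directly from \Cref{criterio} by transporting the statement along the isomorphism
\[
g_1 \colon S_{n+1} \ltimes \mathbb{Z}\Phi' \xrightarrow{\sim} (W_\mathrm{f} \ltimes \mathbb{Z}\Phi)^\bullet, \qquad w\varepsilon^\mu \mapsto (h(w), g(\mu)),
\]
together with the dictionary provided by \Cref{dictionary} and \Cref{gdom}. The setup identifies $W_\mathrm{f}$ with $S_{n+1}$ via $h$, and the Bruhat order on $(W_\mathrm{f} \ltimes \mathbb{Z}\Phi)^\bullet$ was defined precisely by transport along $g_1$ (composed with $g_2, g_3, g_4$). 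In particular $x_1 \leq x_2$ if and only if $g_1^{-1}(x_1) \leq g_1^{-1}(x_2)$.

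First I would set $y_k := g_1^{-1}(x_k) = w_k \varepsilon^{g^{-1}(\nu_k)}$ (viewing $w_k$ as an element of $S_{n+1}$ via $h^{-1}$), and apply \Cref{criterio} to conclude that $x_1 \leq x_2$ is equivalent to
\[
\bigl(g^{-1}(\nu_1) + \varpi_i' - w_1^{-1}\varpi_{j-1}'\bigr)_\mathrm{dom} \preceq \bigl(g^{-1}(\nu_2) + \varpi_i' - w_2^{-1}\varpi_{j-1}'\bigr)_\mathrm{dom}
\]
for all $i, j \in \{1,\ldots,n+1\}$. Then I would apply $g$ to both sides. By \Cref{dictionary}\,(\ref{itemNicoA}), $g$ is $S_{n+1}$-equivariant (hence commutes with $w_k^{-1}$) and $\bbZ$-linear on $\bbZ^{n+1}$, so the inner expressions translate to $\nu_k + g(\varpi_i') - w_k^{-1} g(\varpi_{j-1}')$. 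By \Cref{gdom}, $g$ commutes with the operation $(\cdot)_\mathrm{dom}$, and by \Cref{dictionary}\,(\ref{itemNicoE}), $g$ converts $\preceq$ into the dominance order $\leq$ on $\Lambda$.

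The only remaining issue is matching the index sets. By \Cref{dictionary}, $g(\varpi_i') = \varpi_i$ for $i \in \{0,1,\ldots,n\}$, and a direct computation gives $g(\varpi_{n+1}') = \mathbf{0} = \varpi_0$. Therefore, as $i$ ranges over $\{1,\ldots,n+1\}$, the element $g(\varpi_i')$ ranges exactly over $\{\varpi_1,\ldots,\varpi_n,\varpi_0\} = \{\varpi_0, \varpi_1,\ldots,\varpi_n\}$; and as $j-1$ ranges over $\{0,\ldots,n\}$, the element $g(\varpi_{j-1}')$ ranges over the same set. Thus the universally quantified family of inequalities from \Cref{criterio} becomes exactly the family in statement (b) of the corollary, indexed by $i,j \in \{0,1,\ldots,n\}$.

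There is no real obstacle here; the proof is a mechanical verification that the various translation maps $g, h, g_1$ preserve the data appearing in \Cref{criterio}. The only mild subtlety worth double-checking is the reindexing at the endpoints, namely that $\varpi_{n+1}'$ collapses to $\varpi_0 = \mathbf{0}$ under $g$, which is what ensures the index set $\{1,\ldots,n+1\}$ of the primed version matches the set $\{0,1,\ldots,n\}$ of the unprimed version without any missing or duplicated inequalities.
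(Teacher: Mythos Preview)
Your proposal is correct and follows essentially the same approach as the paper's proof: transport the statement of \Cref{criterio} along $g_1$, then apply $g$ together with \Cref{dictionary} and \Cref{gdom} to convert $\preceq$ into dominance order and $\varpi_i'$ into $\varpi_i$, with the endpoint reindexing handled by $g(\varpi_{n+1}')=\varpi_0$. The only cosmetic difference is that the paper writes $g_{\mathrm{res}}^{-1}(\nu_k)$ for the restriction of $g$ to $\mathbb{Z}\Phi'$, whereas you write $g^{-1}(\nu_k)$; since $\nu_k\in\mathbb{Z}\Phi$ this is harmless, though strictly speaking $g$ is not globally invertible.
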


\begin{proof}
Recall the bijection $g_{\mathrm{res}}:\mathbb{Z}\Phi'\rightarrow\mathbb{Z}\Phi$, defined as the restriction of the map $g$ in \Cref{dictionary}.
Using the isomorphism $g_1$, we see that $x_1 \leq x_2$ in the Bruhat order if and only if
\begin{equation}\label{guatamala1}
w_1\varepsilon^{g_{\mathrm{res}}^{-1}(\nu_1)} = g_1^{-1}(x_1) \leq w_1\varepsilon^{g_{\mathrm{res}}^{-1}(\nu_1)} = g_1^{-1}(x_2).
\end{equation}

With this notation, and using the equivalence of \Cref{criterio}, \eqref{guatamala1} is equivalent to 
\begin{equation}\label{guatamala2}
\forall i, j \in \{1, \ldots, n+1\}  :  (g_{\mathrm{res}}^{-1}(\nu_1) +\varpi'_i - w_1^{-1}(\varpi'_{j-1}))_{\mathrm{dom}}  \preceq (g_{\mathrm{res}}^{-1}(\nu_2) +\varpi'_i - w_2^{-1}(\varpi'_{j-1}))_{\mathrm{dom}}.
\end{equation}

Applying the map $g$ to both sides of the inequality, using \Cref{dictionary} part \eqref{itemNicoE},  and then using \Cref{gdom}, we obtain that \ref{guatamala2} is equivalent to
\begin{equation}\label{guatamala3}
\forall i, j \in \{1, \ldots, n+1\}  :  g(g_{\mathrm{res}}^{-1}(\nu_1) +\varpi'_i - w_1^{-1}(\varpi'_{j-1}))_{\mathrm{dom}}  \leq g(g_{\mathrm{res}}^{-1}(\nu_2) +\varpi'_i - w_2^{-1}(\varpi'_{j-1}))_{\mathrm{dom}}.\end{equation}
As $g(\varpi'_{n+1})=\varpi_0$, by applying again \Cref{dictionary}, we obtain that \eqref{guatamala3} is equivalent to \[ \forall i, j \in \{0, \ldots, n\}  :  (\nu_1 +\varpi_i - w_1^{-1}(\varpi_{j}))_{\mathrm{dom}}  \leq (\nu_2 +\varpi_i - w_2^{-1}(\varpi_{j}))_{\mathrm{dom}}. \] 
\end{proof}

\subsection{Specializing the criterion for \texorpdfstring{$C^{+}$}{C+} and in our conventions}
We prove \Cref{prop: main tool}, which we restate for the reader's convenience.

\begin{prop}\label{prop:main_de_verdad} 
Let $x,y\in W_\mathrm{aff}^+$.
Then $x\leq y$ (in the Bruhat order), if and only if $x(-\varpi_i)\leq y(-\varpi_i)$ for all $i\in \{0,1,\ldots ,n\}$ (in the dominance order).
Equivalently, $x\leq y$ in the Bruhat order if and only if $\nu\leq\mu$ in the dominance order for all $(\nu,\mu)\in V(x)\times V(y)$ satisfying $\nu\equiv\mu\mod\bbZ\Phi$.
\end{prop}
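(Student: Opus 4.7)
The plan is to derive the proposition by specializing the general criterion in Corollary \ref{criteriofinal} to elements whose alcoves lie in the dominant chamber. First I would recast the general criterion in a form closer to the statement of the proposition. Writing $x = (w, \nu)$ in the $(W_\mathrm{f} \ltimes \mathbb{Z}\Phi)^\bullet$ presentation, one has $x(\varpi_i) = w(\nu + \varpi_i)$ for the vertex of $\mathcal{A}_x$ at the position $\varpi_i$ of the fundamental alcove, so $\nu + \varpi_i - w^{-1}\varpi_j = w^{-1}(x(\varpi_i) - \varpi_j)$, and the $W_\mathrm{f}$-invariance of $\mathrm{dom}$ gives
\begin{equation*}
(\nu + \varpi_i - w^{-1}\varpi_j)_\mathrm{dom} \;=\; (x(\varpi_i) - \varpi_j)_\mathrm{dom}.
\end{equation*}
After reconciling the paper's choice of fundamental alcove with that of \cite{Bour46} (they differ by $w_0$, so that the vertices $x(\varpi_i)$ in the Bourbaki picture correspond to the vertices $x(-\varpi_i)$ of the paper), the criterion reads: $x \leq y$ if and only if for every pair of vertices $\mu \in V(x)$ and $\nu \in V(y)$ matched via $\mu \equiv \nu \mod \mathbb{Z}\Phi$ and every $j \in \{0, \ldots, n\}$, one has $(\mu - \varpi_j)_\mathrm{dom} \leq (\nu - \varpi_j)_\mathrm{dom}$.

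For $x, y \in W_\mathrm{aff}^+$, every vertex of the corresponding alcove lies in $\overline{C^+}$ and is therefore already dominant. The direction $(\Rightarrow)$ is then immediate by specializing to $j = 0$ (so $\varpi_j = \mathbf{0}$): the criterion collapses to $(\mu)_\mathrm{dom} \leq (\nu)_\mathrm{dom}$, which is simply $\mu \leq \nu$ for matched vertices, as asserted.

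The converse $(\Leftarrow)$ is the technical heart of the argument. Given only the vertex comparison (the $j = 0$ case), I must upgrade it to every $(i, j)$, which reduces to the following monotonicity lemma: \emph{if $v_1, v_2 \in \overline{C^+}$ satisfy $v_1 \leq v_2$ in dominance order, then $(v_1 - \varpi_j)_\mathrm{dom} \leq (v_2 - \varpi_j)_\mathrm{dom}$ for every $j \in \{0, \ldots, n\}$.} My plan is to establish this through the $\mathbb{Z}^{n+1}$-realization of Subsection \ref{ssec:realization}, in which dominance becomes classical majorization, $\varpi_j = (1, \ldots, 1, 0, \ldots, 0)$ decreases the first $j$ coordinates by one, and $\mathrm{dom}$ is the decreasing sort. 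Using the identity $\sum_{k \leq m} u_\mathrm{dom}^{(k)} = \max_{|S| = m} \sum_{i \in S} u^{(i)}$ together with the fact that for dominant $u$ the optimal $S$ is a prefix union of the form $[1,s] \cup [j+1, j+m-s]$, the lemma reduces to a concave optimization whose monotonicity under majorization hinges on the specific block structure of $\varpi_j$. I stress that the corresponding statement is \emph{false} for arbitrary dominant shifts $\mu$ in place of $\varpi_j$ (small counterexamples exist already in $\widetilde{A}_2$, e.g.\ taking $\mu = 2\varpi_1$), which is why the fundamental-weight form is indispensable; designing the argument so that this asymmetry is exploited rather than obscured will be the main obstacle of the proof.
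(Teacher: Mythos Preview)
Your convention conversion contains a sign error that makes the problem look harder than it is. Conjugating the paper's affine Weyl group to Bourbaki's amounts to conjugation by $-\mathrm{id}$ on $E$ (the map $(\lambda,w)\mapsto(-\lambda,w)$ in \eqref{guatamala4}), so the Bourbaki vertex $x'(\varpi_i)$ equals $-x(-\varpi_i)$ in the paper's picture, \emph{not} $x(-\varpi_i)$. Tracking this through, the criterion from \Cref{criteriofinal} rewrites as
\[
(-x(-\varpi_i)-\varpi_j)_{\mathrm{dom}}\ \leq\ (-y(-\varpi_i)-\varpi_j)_{\mathrm{dom}}
\qquad\text{for all } i,j,
\]
which is exactly the paper's \eqref{eq: paso intermedio2}; your version with $\mu-\varpi_j$ is the wrong expression.

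Once the sign is corrected, the monotonicity lemma you propose becomes unnecessary. The point is that for $x\in W_\mathrm{aff}^+$ the vertex $\mu:=x(-\varpi_i)$ lies in $C^+$, so $\mu+\varpi_j\in C^+$ and hence $-\mu-\varpi_j\in C^-$. Thus the $\mathrm{dom}$ operator is simply multiplication by $w_0$:
\[
(-\mu-\varpi_j)_{\mathrm{dom}} \;=\; w_0(-\mu-\varpi_j)\;=\;-w_0(\mu)-w_0(\varpi_j).
\]
The $w_0(\varpi_j)$ term appears identically on both sides of the inequality and cancels, leaving $-w_0(\mu)\leq -w_0(\nu)$, which by \Cref{lem: w0 bruhat} is $\mu\leq\nu$. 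So the full $(n+1)^2$-family of inequalities collapses to the $n+1$ vertex comparisons at once; there is no need for any majorization or block-structure argument, and in particular no delicate case where general shifts would fail but $\varpi_j$ succeeds. This is precisely the paper's proof.
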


The proof of \Cref{prop:main_de_verdad} is a translation of \Cref{criteriofinal} to our conventions.
To do this we first need the following lemma.
\begin{lemma}\label{lem: w0 bruhat}
Let $\lambda,\mu\in\Lambda$.
Then
\begin{equation*}
\mu\leq\lambda\iff w_0(\lambda)\leq w_0(\mu).
\end{equation*}
\end{lemma}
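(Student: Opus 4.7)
The plan is to use the well-known fact that the longest element $w_0$ of $W_\mathrm{f}$ acts on the simple roots by $w_0(\alpha_i) = -\alpha_{\sigma(i)}$, where $\sigma$ is the Dynkin diagram involution (in type $A_n$ this is $\sigma(i) = n+1-i$, but the argument is insensitive to the specific permutation). Once we have this, the equivalence is a one-line computation on the $\bbZ$-linear combinations expressing $\lambda - \mu$ in the simple-root basis.

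First I would unpack the definition of the dominance order: $\mu \leq \lambda$ is equivalent to the existence of non-negative integers $c_1,\ldots,c_n$ with
\begin{equation*}
\lambda - \mu = \sum_{i=1}^n c_i \alpha_i.
\end{equation*}
Applying the linear isometry $w_0$ to both sides and invoking $w_0(\alpha_i) = -\alpha_{\sigma(i)}$ yields
\begin{equation*}
w_0(\lambda) - w_0(\mu) = -\sum_{i=1}^n c_i \alpha_{\sigma(i)},
\end{equation*}
equivalently $w_0(\mu) - w_0(\lambda) = \sum_{i=1}^n c_i \alpha_{\sigma(i)}$, which is a non-negative integer combination of simple roots. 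Hence $w_0(\lambda) \leq w_0(\mu)$. The reverse implication follows by running the same computation with $(w_0(\lambda), w_0(\mu))$ in place of $(\mu,\lambda)$ and using $w_0^2 = \mathrm{id}$.

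There is no genuine obstacle: the only thing that needs verification is the statement $w_0(\Delta) = -\Delta$, which is standard and holds in all finite Weyl groups (it is equivalent to $w_0$ sending $\Phi^+$ to $\Phi^-$). If desired, one can cite \cite[\S 1.8]{humphreys1992reflection} or the classification-free argument that $w_0$ is the unique element sending $\Phi^+$ to $\Phi^-$, together with the fact that it permutes the simple roots up to sign because it preserves the set of positive roots adjacent to the walls of the fundamental chamber.
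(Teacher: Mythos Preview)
Your proof is correct and takes essentially the same approach as the paper's: both arguments rest on the fact that $w_0$ sends the simple roots (equivalently, the fundamental weights) to the negatives of a permutation of themselves, from which the order reversal is immediate. The paper phrases it via the pairing $\langle \lambda,\varpi_i\rangle = \langle w_0(\lambda),-\varpi_{\tau(i)}\rangle$ after reducing to the case $\mu=\mathbf{0}$, while you apply $w_0$ directly to the expansion $\lambda-\mu=\sum c_i\alpha_i$; these are the same computation in dual bases. One tiny slip: the $c_i$ need not be integers when $\lambda$ and $\mu$ lie in different cosets of $\bbZ\Phi$, but your argument works verbatim with non-negative rationals.
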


\begin{proof}
It suffices to show that $0\leq\lambda\implies w_0(\lambda)\leq0$.
It is well known that $w_0$ maps the set of fundamental weights to the set of the negative fundamental weights.
Let $\tau$ be the permutation  of  $\{1,\ldots , n\}$ defined by $w_0( \varpi_i)  =-\varpi_{\tau(i)}$.
Note that since $\mathbf{0}\leq \lambda $, for all $1\leq i\leq n$, we have $$0\leq  \lr{\lambda}{\varpi_i} =  \lr{w_0(\lambda)}{-\varpi_{\tau(i)}}.$$
This implies that $\lr{w_0(\lambda)}{\varpi_i}\leq 0$ for all $i\in \{1,\ldots , n \}$. In other words, $w_0(\lambda) \leq \mathbf{0}$. 
\end{proof}

\begin{proof}[Proof of \Cref{prop:main_de_verdad}]
We will prove only the first equivalence, as the second follows directly from it.
We denote by $(\bbZ\Phi\rtimes W_\mathrm{f})_\bullet$ the affine Weyl group defined as in \Cref{sec:preliminaries}.
Recall also that $(\bbZ\Phi\rtimes W_\mathrm{f})^\bullet$ is the affine Weyl used in \Cref{sec:isomorphisms}.
To apply the criterion in \Cref{criteriofinal}, we will need the following group isomorphisms:
\begin{equation}  \label{guatamala4}
\begin{array}{ccccc}
(\bbZ\Phi\rtimes W_\mathrm{f})_\bullet     & \xlongrightarrow{\sim} &  (\bbZ\Phi\rtimes W_\mathrm{f})^\bullet  & \xlongrightarrow{\sim}   &(W_\mathrm{f}\ltimes \bbZ\Phi)^\bullet  \\
 (\lambda,w)    & \longmapsto  &   (-\lambda,w) & \longmapsto   &(w,-w^{-1}(\lambda))
\end{array}
\end{equation}
Let $x=(\lambda,w)$ and $y= (\mu,u)$ in $(\bbZ\Phi\rtimes W_\mathrm{f})_\bullet$. Using \eqref{guatamala4} we see that $x\leq y$ (in the Bruhat order) if and only if
\begin{equation}\label{guatamala5}
(w,-w^{-1}(\lambda))\leq(u,-u^{-1}(\mu)) \ \mbox{ in } (W_\mathrm{f}\ltimes \bbZ\Phi)^\bullet.
\end{equation}

Applying \Cref{criteriofinal}, we get that \eqref{guatamala5} is equivalent to the statement that for all $i,j\in\{0,\ldots, n\}$, we have
\begin{equation}\label{eq: paso intermedio}
(-w^{-1}(\lambda)+\varpi_i-w^{-1}(\varpi_j))_{\mathrm{dom}}\leq (-u^{-1}(\mu)+\varpi_i-u^{-1}(\varpi_j))_{\mathrm{dom}},
\end{equation}
in the dominance order.
By definition, for any $v\in\Lambda$ and $z\in W_\mathrm{f}$ we have $v_\mathrm{dom}=(z\cdot v)_\mathrm{dom}$.
Hence, \eqref{eq: paso intermedio} becomes
\begin{equation*}
(-\lambda+w(\varpi_i)-\varpi_j)_{\mathrm{dom}}\leq (-\mu+u(\varpi_i)-\varpi_j)_{\mathrm{dom}}.
\end{equation*}
In turn, this last inequality is equivalent to 
\begin{equation}\label{eq: paso intermedio2}
(-x(-\varpi_i)-\varpi_j)_{\mathrm{dom}}\leq (-y(-\varpi_i)-\varpi_j)_{\mathrm{dom}}.
\end{equation}
Since $x\in W_\mathrm{aff}^+$ we have that $x(-\varpi_i)+\varpi_j\in C^+$, so that $-x(-\varpi_i)-\varpi_j\in C^-$, where $C^-=-C^+$.
On the other hand, by \cite[Corollary VI.1.6.3]{Bour46} we have $w_0(C^-)=C^+$.
It follows that
\begin{equation} \label{guatamala6}
(-x(-\varpi_i)-\varpi_j)_{\mathrm{dom}} = w_0(  -x(-\varpi_i)-\varpi_j  ) = -w_0x(\varpi_i)-w_0(\varpi_j),
\end{equation}
and the same holds for $y$. 
By \eqref{guatamala6}, we obtain that \eqref{eq: paso intermedio2} is equivalent to
\begin{equation*}
\forall i \in \{0,\ldots,n\} : w_0x(-\varpi_i)\geq w_0y(-\varpi_i).
\end{equation*}
Finally, multiplying by $w_0$ and applying \Cref{lem: w0 bruhat}, we get $ x(-\varpi_i) \leq y (-\varpi_i)$  for all $i\in \{0,1,\ldots , n\}$.
\end{proof}

\bibliography{bibliography}
\bibliographystyle{plain}
\end{document}